\theoremstyle{thmstyleone}%
\newtheorem{theorem}{Theorem}%
\newtheorem{proposition}[theorem]{Proposition}%
\theoremstyle{thmstyletwo}%
\newtheorem{remark}{Remark}%
\theoremstyle{thmstylethree}%
\newtheorem*{claim}{Claim}
\newtheorem{lemma}{Lemma}[section]
\begin{document}

\title[$p-$Harmonic functions in the upper half-space]{$p-$Harmonic functions in the upper half-space}

\author[1]{\fnm{E.} \sur{Abreu}}\email{eabreu@ufmg.br}

\author[2]{\fnm{R.} \sur{Clemente}}\email{rodrigo.clemente@ufrpe.br}

\author*[3]{\fnm{J. M.} \sur{do \'O}}\email{jmbo@academico.ufpb.br}

\author[3]{\fnm{E.} \sur{Medeiros}}\email{everaldomedeiros1@gmail.com}

\affil[1]{\orgdiv{Departamento de Matem\'atica}, \orgname{Universidade Federal de Minas Gerais}, \orgaddress{\street{Av. Pres. Antônio Carlos}, \city{Belo Horizonte}, \postcode{30161-970}, \state{Minas Gerais}, \country{Brazil}}}

\affil[2]{\orgdiv{Departamento de Matem\'atica}, \orgname{Universidade Federal Rural de Pernambuco}, \orgaddress{\street{Rua Dom Manuel de Medeiros}, \city{Recife}, \postcode{58051-900}, \state{Pernambuco}, \country{Brazil}}}

\affil*[3]{\orgdiv{Department of Mathematics}, \orgname{Federal University of Para\'{\i}ba}, \orgaddress{\street{Campus I}, \city{João Pessoa}, \postcode{58051-900}, \state{Para\'iba}, \country{Brazil}}}

\abstract{This paper investigates the existence, nonexistence, and qualitative properties of p-harmonic functions in the upper half-space $\mathbb{R}^N_+ \, (N \geq 3)$ satisfying nonlinear boundary conditions for $1<p<N$. Moreover, the symmetry of positive solutions is shown by using the method of moving planes.}

\keywords{Half-space, p-Harmonic function, Regularity, Poho\v{z}aev identity, Moving plane}

\maketitle

\section{Introduction}
This work treats some aspects of the $p-$harmonic equations in the upper half-space under nonlinear boundary conditions.  The discussion focuses on the existence, nonexistence and qualitative properties of solutions for the following model of quasilinear elliptic problems with nonlinear boundary conditions,
\begin{equation}\label{P1}
	\left\{
	\begin{alignedat}{2}
& \Delta_p u \equiv \mathrm{div}(|\nabla u|^{p-2} \nabla u)		=0\quad&\text{ in }&\quad\mathbb{R}_{+}^{N},\\
	&	\vert \nabla u \vert^{p-2}\frac{\partial u}{\partial \nu}+\lambda |u|^{p-2}u= |u|^{q-2}u\quad&\text{ on }&\quad\mathbb{R}^{N-1},
	\end{alignedat}
	\right.
	\tag{$\mathcal{P}_\lambda$}
\end{equation}
where $\mathbb{R}^N_+:=\{x=(x',x_N)\in\mathbb{R}^N :  x' \in \mathbb{R}^{N-1},   x_N>0\}$ standards for the upper half-space, $\lambda$ is a real parameter, $\nu$ is the unit outer normal to the boundary $\partial\mathbb{R}^N_+:=\mathbb{R}^{N-1}, \; 1<p<N\quad\mbox{and}\quad p\leq q<\infty.$ The nonlinear differential operator $\Delta_p u \equiv \mathrm{div}(|\nabla u|^{p-2} \nabla u)$ is known as 
the $p-$Laplacian.

Nonlinear elliptic equations have been the object of intense study motivated by many problems in several mathematical physics and geometry branches. 
In particular, many works have been concerned with the existence and qualitative properties of solutions for problems involving  elliptic equations with nonlinear boundary conditions in the upper half-space. For instance,  the works  \cite{liu,terra,chipot,Hu,EMEDOOEVE2010} investigated nonlinear elliptic problems involving nonlinear boundary conditions.

Problem \eqref{P1} is related to the Euler-Lagrange equation associated with the Sobolev embedding 
 \begin{align}\label{Nossa inequality}
 	{\displaystyle\left(\int_{\mathbb{R}^{N-1}}|u|^q \mathrm{d}  x'\right)^{p/q}}\leq S^{-1}_\lambda(p,q)\displaystyle\left(\int_{\mathbb{R^N_+}}|\nabla u|^p \mathrm{d}  x+\lambda\int_{\mathbb{R}^{N-1}}|u|^p \mathrm{d}  x'\right),\quad u\in  C_0^\infty(\mathbb{R}^N),
 \end{align}
 where $S_\lambda(p,q)$ is the best constant for \eqref{Nossa inequality}. 
 J. Escobar in the remarkable paper \cite{Escobar}, by exploiting the conformal  invariance of \eqref{Nossa inequality}  when $p=2, \; \lambda=0$ and $N\geq3$,   characterize the minimizers of $S_0(2,2_*)$. He also conjectured if similar result holds for minimizers of $S_0(p,p_*)$, which was proved by B. Nazaret \cite{Nazaret} for the case $N\geq 3$ and $1<p<N$ based on the mass transportation approach.
 
 We also mention that a significant number of authors studied inequality \eqref{Nossa inequality} in bounded domains (or compact manifolds) see \cite{yanyan,EMEDOOEVE2010} and their references. 
 
Motivated by the above discussion, the primary purpose of this paper is threefold. First, we use min-max arguments to investigate the existence of ground state solutions to \eqref{P1}, that is, solutions that admit the smallest energy among all nontrivial weak solutions. Second, we will deal with the asymptotic decay results and symmetric properties for positive solutions to \eqref{P1}. Moreover,  we prove a nonexistence result by applying a Poho\v{z}aev type identity.

\subsection{Statement of the main results}    
	Assume that $1<p<N$. The approach to study the existence of solutions for \eqref{P1}  is based on variational methods, and for that,  we consider the subspace of $\mathcal{D}^{1,p}(\mathbb{R}_{+}^{N})$ defined by
	\begin{equation*}
	    \begin{aligned}
	   & E:=\left\{ u\in \mathcal{D}^{1,p}(\mathbb{R}_{+}^{N})\text{ : }u\vert_{\mathbb{R}^{N-1}}\in L^p(\mathbb{R}^{N-1})\right\},\\
	   & \Vert u \Vert_{E}=
	   \left( \|\nabla u\|^p_{L^p(\mathbb{R}_{+}^{N})} + \|u\|^p_{L^p(\mathbb{R}^{N-1})}\right)^{1/p},
	     \end{aligned}
	\end{equation*}
	where $ u\vert_{\mathbb{R}^{N-1}}$ is understood in the trace sense. The space $\mathcal{D}^{1,p}(\mathbb{R}_{+}^{N})$ is the completion of the restriction on $\mathbb{R}_{+}^{N}$ of functions $C^{\infty}_{0}(\mathbb{R}^N)$ with respect to the norm $\|\nabla u\|_{L^p(\mathbb{R}_{+}^{N})}$.	
	
By a weak solution of \eqref{P1},  we understand  a function $u\in E\setminus\{0\}$ such that for all $\varphi\in C_0^\infty(\mathbb{R}^N),$
	\begin{equation}\label{Def1}
		\int_{\mathbb{R}_{+}^{N}}\vert \nabla u \vert^{p-2}\nabla u\nabla \varphi\,\mathrm{d}x+\lambda\int_{\mathbb{R}^{N-1}}\vert u \vert^{p-2}u\varphi\,\mathrm{d}x'=\int_{\mathbb{R}^{N-1}}|u|^{q-2}u\varphi\,\mathrm{d}x'.
	\end{equation}
	
	\begin{theorem}\label{T-Existence}
		Assume $p< q< p_*:=p(N-1)/(N-p)$. Then, Problem \eqref{P1} has a positive ground state solution for all $\lambda>0$. 
	\end{theorem}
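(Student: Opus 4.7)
The plan is to use a mountain-pass / Nehari-manifold approach, with a concentration-compactness argument by translations in $x'$ to overcome the lack of compactness of the trace embedding on the unbounded boundary.

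First I would introduce the energy functional
\[
I_\lambda(u)=\frac{1}{p}\int_{\mathbb{R}^{N}_{+}}|\nabla u|^{p}\,dx+\frac{\lambda}{p}\int_{\mathbb{R}^{N-1}}|u|^{p}\,dx'-\frac{1}{q}\int_{\mathbb{R}^{N-1}}|u|^{q}\,dx'=\frac{1}{p}\|u\|_{E}^{p}-\frac{1}{q}\int_{\mathbb{R}^{N-1}}|u|^{q}\,dx',
\]
defined on $E$. Because $p<q<p_{*}$, the continuous trace embedding $E\hookrightarrow L^{q}(\mathbb{R}^{N-1})$ (together with $E\hookrightarrow L^{p}(\mathbb{R}^{N-1})$ built into the norm) gives $I_{\lambda}\in C^{1}(E,\mathbb{R})$, and the critical points of $I_{\lambda}$ are exactly the weak solutions of \eqref{P1} in the sense of \eqref{Def1}.

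Next I would verify the mountain-pass geometry. Since $q>p$, for small $\|u\|_{E}=\rho$ the subcritical embedding gives $I_{\lambda}(u)\geq\alpha>0$, while for any fixed nontrivial $u_{0}\in E$ with $u_{0}\geq 0$ one has $I_{\lambda}(tu_{0})\to-\infty$ as $t\to\infty$. A standard application of the Ambrosetti--Rabinowitz theorem (without $(PS)$) then produces a Palais--Smale sequence $(u_{n})$ at the mountain-pass level $c_{\lambda}>0$. Using $q>p$ in the usual way, the estimate $I_{\lambda}(u_{n})-\frac{1}{q}\langle I_{\lambda}'(u_{n}),u_{n}\rangle=\bigl(\tfrac{1}{p}-\tfrac{1}{q}\bigr)\|u_{n}\|_{E}^{p}$ yields boundedness of $(u_{n})$ in $E$. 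Passing to a subsequence, $u_{n}\rightharpoonup u$ in $E$ and $u_{n}\to u$ a.e., and a routine argument identifies $u$ as a weak solution of \eqref{P1}.

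The main obstacle is that the trace embedding $E\hookrightarrow L^{q}(\mathbb{R}^{N-1})$ is not compact because of translation invariance in the tangential variable $x'$. I would address this by a concentration--compactness argument of Lions type on the boundary: if the boundary traces of $u_{n}$ exhibit vanishing, i.e. $\sup_{y'\in\mathbb{R}^{N-1}}\int_{B_{R}(y')}|u_{n}|^{p}\,dx'\to 0$ for every $R>0$, then an interpolation between $L^{p}(\mathbb{R}^{N-1})$ and $L^{p_{*}}(\mathbb{R}^{N-1})$ forces $\int_{\mathbb{R}^{N-1}}|u_{n}|^{q}\to 0$, which contradicts $c_{\lambda}>0$. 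Hence there exist $\delta>0$, $R>0$ and points $y_{n}'\in\mathbb{R}^{N-1}$ with $\int_{B_{R}(y_{n}')}|u_{n}|^{p}\,dx'\geq\delta$; translating $\tilde u_{n}(x',x_{N}):=u_{n}(x'+y_{n}',x_{N})$ keeps the $E$-norm and the functional invariant, so $\tilde u_{n}\rightharpoonup \tilde u\neq 0$ and the weak limit $\tilde u$ is a nontrivial weak solution of \eqref{P1}.

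To see that the solution obtained is a ground state, I would work on the Nehari manifold $\mathcal{N}_{\lambda}=\{v\in E\setminus\{0\}:\langle I_{\lambda}'(v),v\rangle=0\}$, on which $I_{\lambda}(v)=(\tfrac{1}{p}-\tfrac{1}{q})\|v\|_{E}^{p}$; a standard fibering argument shows $m_{\lambda}:=\inf_{\mathcal{N}_{\lambda}}I_{\lambda}=c_{\lambda}$, and by weak lower semicontinuity together with Fatou's lemma applied to $\langle I_{\lambda}'(\tilde u_{n}),\tilde u_{n}\rangle=0$ one obtains $I_{\lambda}(\tilde u)\leq m_{\lambda}$, hence equality. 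Finally, replacing $\tilde u$ by $|\tilde u|$ (which leaves $\|\cdot\|_{E}$ and $I_{\lambda}$ unchanged) produces a nonnegative minimizer; standard regularity for the $p$-Laplacian combined with the Vázquez strong maximum principle/Hopf-type boundary lemma in $\overline{\mathbb{R}^{N}_{+}}$ yields $\tilde u>0$. The hardest step, and the one requiring the most care, is the tangential concentration-compactness step that prevents vanishing and delivers a nontrivial weak limit after translation.
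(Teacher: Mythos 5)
Your overall strategy — mountain-pass geometry, boundedness of a (PS) sequence, a Lions-type concentration-compactness argument in the tangential variable $x'$ to rule out vanishing of the boundary traces, translation to get a nontrivial weak limit, and a Nehari-manifold argument to identify the mountain-pass level with the least energy level — matches the paper's proof closely. The non-vanishing step you flag as the hardest one is indeed the content of Lemmas~\ref{05} and~\ref{17}, and your interpolation argument is the right idea (the paper splits into two cases depending on the position of $q$ relative to $\overline{q}=p(N-2+p)/(N-1)$ to make the interpolation exponents come out right, a technical detail worth checking but not a conceptual gap).

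The genuine gap is the sentence ``a routine argument identifies $u$ as a weak solution.'' For the Laplacian this is routine because the operator is linear, so $\nabla u_n\rightharpoonup\nabla u$ in $L^2$ lets you pass to the limit in $\int\nabla u_n\cdot\nabla\varphi$ directly. For the $p$-Laplacian with $p\neq 2$ this is precisely where the difficulty lies: weak convergence $u_n\rightharpoonup u$ in $E$ does \emph{not} imply $\int_{\mathbb{R}^N_+}|\nabla u_n|^{p-2}\nabla u_n\cdot\nabla\varphi\to\int_{\mathbb{R}^N_+}|\nabla u|^{p-2}\nabla u\cdot\nabla\varphi$, because the map $\xi\mapsto|\xi|^{p-2}\xi$ is nonlinear. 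You need a.e.~convergence of the gradients $\nabla u_n\to\nabla u$, and in an unbounded domain this requires real work. The paper devotes the bulk of the proof of Theorem~\ref{T-Existence} to exactly this: testing the approximate Euler equation against truncations $\psi_R\,T_\eta(w_n-w)$, showing
\[
\int_{K_R}\left[|\nabla w_n|^{p-2}\nabla w_n-|\nabla w|^{p-2}\nabla w\right]\cdot\nabla T_\eta(w_n-w)\,\mathrm{d}x\to 0,
\]
and then invoking Simon's inequality to conclude $\nabla w_n\to\nabla w$ a.e.\ in $\mathbb{R}^N_+$, which combined with the $L^{p/(p-1)}$ bound on $|\nabla w_n|^{p-2}\nabla w_n$ identifies its weak limit as $|\nabla w|^{p-2}\nabla w$. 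Without this step your proof does not actually produce a solution. You should either import this Boccardo--Murat-type truncation argument or cite an appropriate reference that supplies a.e.~convergence of gradients for bounded (PS) sequences of $p$-Laplacian-type functionals on unbounded domains.

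A smaller point: the paper avoids the final ``replace $\tilde u$ by $|\tilde u|$'' step by putting $(u^+)^q$ rather than $|u|^q$ into the functional from the start, so that testing $I'(u)=0$ with $u^-$ directly forces $u\geq 0$. Your alternative — observing that $|\tilde u|\in\mathcal{N}_\lambda$ with the same energy, hence is also a minimizer and therefore (since $\mathcal{N}_\lambda$ is a natural constraint for $p<q$) a critical point — is legitimate, and then strict positivity via Lieberman regularity plus Vázquez's strong maximum principle and a Hopf lemma at the boundary is the same route the paper implicitly takes. Just make sure to actually verify that the Nehari manifold is a natural constraint, since you rely on that to promote the constrained minimizer $|\tilde u|$ to a free critical point.
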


\medskip

\begin{remark}
 \begin{flushleft}
$\mathrm{(a)}$  One can see that   $S_\lambda(p,q)>0$ and it is achieved for all $\lambda>0$, as a consequence of Theorem~\ref{T-Existence}. \\
 $\mathrm{(b)}$ If $p<q<p_*$ it holds $S_\lambda(p,q)\rightarrow 0$ as $\lambda\rightarrow 0^+$, see Lemma \ref{111} below.\\  
$\mathrm{(c)}$ If $q=p_*$, we have $S_\lambda(p,p_*)\rightarrow S_0(p,p_*)$ as $\lambda\rightarrow 0^+$, where $S_0(p,p_*)$ is the best constant of the Sobolev trace embedding $\mathcal{D}^{1,p}(\mathbb{R}^N_+)\hookrightarrow L^{p^*}(\mathbb{R}^{N-1}).$\\
$\mathrm{(d)}$ Let $m,p\in (1,N)$, $q\in (p,p_*)$ and $\lambda \geq 0$, then $S_\lambda (p,q)\rightarrow S_\lambda (m,q)$ as $p\rightarrow m$. \\
$\mathrm{(e)}$ If $\lambda=0$, we have $S_0(p,q)>0$ if and only if $q= p_*,$  by using the trace embedding and a scaling procedure. For details, see \cite{Nazaret}.
\end{flushleft}
\end{remark}

\medskip

	Next, we investigate regularity and asymptotic behavior for solutions of Problem \eqref{P1}. By using a Moser iteration procedure and a Harnack type inequality  (see Lemma \ref{12}), we show that weak solutions of Problem \eqref{P1} decay to zero at infinity. 
	To be precise, we state the following result: 

\medskip
 
	\begin{theorem}\label{T-Regularity-Decay}
		Let $u_\lambda$ be a weak solution of Problem \eqref{P1} with $p< q< p_*$. Then, in the trace sense $u_\lambda\vert_{\mathbb{R}^{N-1}}\in L^\infty(\mathbb{R}^{N-1})$ and  $u_\lambda\in L^\infty(\mathbb{R}_{+}^{N})$. Consequently, weak solutions of \eqref{P1} are of class $C^{1,\alpha}_{\mathrm{loc}}(\overline{\mathbb{R}^N_+})$ for some $0<\alpha<1$.
		Furthermore, $u_\lambda$ has the following decay rate at infinity,
		\[
		u_\lambda(x)=O\left( \vert x\vert^{\frac{p-N}{p-1}} \right)\quad\text{ as }\quad \vert x \vert\rightarrow +\infty.
		\]
	\end{theorem}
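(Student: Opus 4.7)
The proof unfolds in three stages: an $L^\infty$ bound on the trace $u_\lambda\vert_{\mathbb{R}^{N-1}}$, an $L^\infty$ bound and $C^{1,\alpha}_{\mathrm{loc}}$ regularity up to the flat boundary, and finally the pointwise decay rate at infinity.

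For the boundary $L^\infty$ estimate I would run a Moser iteration on the trace. Taking $\varphi_k=u_\lambda\min\{|u_\lambda|^{p(\beta-1)},k^{p(\beta-1)}\}$ as test function in \eqref{Def1} with $\beta\geq 1$, letting $k\to\infty$, and combining the resulting energy estimate with the trace embedding $\mathcal{D}^{1,p}(\mathbb{R}_{+}^{N})\hookrightarrow L^{p_*}(\mathbb{R}^{N-1})$, one arrives, after splitting $\mathbb{R}^{N-1}=\{|u_\lambda|\leq A\}\cup\{|u_\lambda|>A\}$ and choosing $A$ large enough to absorb the supercritical tail into the left-hand side by H\"older (here $q<p_*$ is essential), at an iterative inequality of the form
\[
\|u_\lambda\|_{L^{p_*\beta}(\mathbb{R}^{N-1})}^{p\beta}\leq C(\beta)\bigl(1+\|u_\lambda\|_{L^{p\beta}(\mathbb{R}^{N-1})}^{p\beta}\bigr).
\]
Iterating $\beta_{n+1}=(p_*/p)\beta_n$ starting from $\beta_0=1$, with careful control of the product of constants $C(\beta_n)$, yields $u_\lambda\vert_{\mathbb{R}^{N-1}}\in L^\infty(\mathbb{R}^{N-1})$. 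Because $\Delta_p u_\lambda=0$ in $\mathbb{R}_{+}^{N}$ and $u_\lambda\in\mathcal{D}^{1,p}$ decays at infinity in the sense of $L^{p^*}$, the weak maximum principle for the $p$-Laplacian then gives $\|u_\lambda\|_{L^\infty(\mathbb{R}_{+}^{N})}\leq\|u_\lambda\vert_{\mathbb{R}^{N-1}}\|_{L^\infty(\mathbb{R}^{N-1})}$. Once $u_\lambda$ is bounded, the conormal datum $|u_\lambda|^{q-2}u_\lambda-\lambda|u_\lambda|^{p-2}u_\lambda$ is bounded as well, so interior regularity of Tolksdorf/DiBenedetto combined with Lieberman's boundary regularity for quasilinear equations with conormal data delivers $u_\lambda\in C^{1,\alpha}_{\mathrm{loc}}(\overline{\mathbb{R}^{N}_+})$.

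For the decay, the key observation is that $\Phi(x)=|x|^{(p-N)/(p-1)}$ is $p$-harmonic in $\mathbb{R}^N\setminus\{0\}$ and, being radial, satisfies the homogeneous conormal condition $\partial_\nu\Phi=0$ on $\mathbb{R}^{N-1}\setminus\{0\}$. A preliminary step is to prove $u_\lambda(x)\to 0$ as $|x|\to\infty$, which follows by applying the Harnack-type inequality of Lemma~\ref{12} on a sequence of half-balls centered at escaping points, since the local $L^p$ energy of $u_\lambda$ on such half-balls vanishes because $u_\lambda\in E$. Fixing $R$ large enough that $|u_\lambda|^{q-p}\leq\lambda$ on $\mathbb{R}^{N-1}\cap\{|x|>R\}$, the conormal right-hand side becomes nonpositive there; setting $M:=\sup_{|x|=R,\,x\in\overline{\mathbb{R}_{+}^{N}}}|u_\lambda|$, the barrier $v(x)=MR^{(N-p)/(p-1)}\Phi(x)$ dominates $u_\lambda$ on the inner hemisphere, is $p$-harmonic in $\Omega_R:=\mathbb{R}_{+}^{N}\cap\{|x|>R\}$, and has vanishing conormal derivative on the flat part. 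A weak comparison principle for the $p$-Laplacian on $\Omega_R$ with mixed Dirichlet/conormal data then yields $u_\lambda\leq v$ in $\Omega_R$; the same argument applied to $-u_\lambda$, which also solves \eqref{P1}, gives $|u_\lambda|\leq v$ and the stated decay.

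The main technical obstacle will be the last stage: establishing the weak comparison principle for the $p$-Laplacian on the unbounded exterior half-domain $\Omega_R$ with Dirichlet data on the inner hemisphere and a nonpositive conormal trace on the flat part. I plan to handle this by truncating to bounded half-annuli $\Omega_{R,R'}=\Omega_R\cap\{|x|<R'\}$, applying the classical comparison principle there (using that $u_\lambda\to 0$ at infinity while $v>0$ is explicit), and letting $R'\to\infty$. Two subsidiary delicate points are controlling the explosion of the Moser constants $C(\beta_n)$ in the first stage, and ensuring that the conormal inequality $|\nabla u_\lambda|^{p-2}\partial_\nu u_\lambda\leq 0$ on the flat portion of $\partial\Omega_R$ is preserved in the weak sense needed for the comparison argument.
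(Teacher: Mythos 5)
Your proposal is correct, and the first three stages—Moser iteration on the trace, passing to the bulk $L^\infty$ bound, and invoking Lieberman's boundary regularity—coincide in all essentials with the paper's argument (the paper runs the level‑set test function $(u-k)_+$ argument rather than quoting a maximum principle, and iterates with ratio $p_*/\alpha$ rather than $p_*/p$, but these are cosmetic variations of the same Moser scheme; you should note that the choice of Hölder exponents $p_*/(q-p)$ and $p_*/(p_*-q+p)$, which uses $q<p_*$ exactly as you anticipate, is how the paper controls the iterates without any explicit splitting of $\mathbb{R}^{N-1}$).

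Where you genuinely depart from the paper is the choice of barrier for the decay estimate. You use the fundamental solution $\Phi(x)=|x|^{(p-N)/(p-1)}$, which is $p$-harmonic in $\mathbb{R}^N\setminus\{0\}$ and radial, so its conormal derivative vanishes identically on $\partial\mathbb{R}^N_+\setminus\{0\}$; the comparison then rests on the conormal data of $u_\lambda$ becoming nonpositive once $|u_\lambda|^{q-p}\le\lambda$ far out on the boundary, and on a truncated comparison principle on the exterior half-domain. The paper instead employs the shifted Aubin--Talenti-type function
$v(x',x_N)=\bigl(\mu^{p/2}/((\mu+x_N)^2+|x'|^2)\bigr)^{(N-p)/(2(p-1))}$, which is a smooth positive $p$-harmonic function on all of $\overline{\mathbb{R}^N_+}$ solving $|\nabla v|^{p-2}\partial_\nu v=\bigl((N-p)/(p-1)\bigr)^{p-1}v^{p_*-1}$, and tests the weak formulations against $(Au_\lambda-v)_+$ directly on the unbounded region $\{|x'|\ge R\}$. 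The trade-off: the paper's $v$ is globally smooth (no singularity to dodge) and contributes an extra favorable negative term $-cv^{p_*-1}\phi$ to the boundary integral, which makes the sign immediate; your $\Phi$ is simpler and more transparently the "right" power, but puts the full burden of the sign on the condition $|u_\lambda|^{q-p}\le\lambda$ (you correctly write $\le\lambda$; the paper writes $<1/2$, which is a slip) and requires you to justify the exhaustion/$\epsilon$-perturbation argument to transfer the comparison from half-annuli $\Omega_{R,R'}$ to the full exterior region—this is doable precisely because $\Phi>0$ is explicit and $u_\lambda\to0$ by the Harnack lemma, as you say, but it is an additional step the paper's direct energy computation sidesteps. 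Both barriers produce the same decay rate $|x|^{(p-N)/(p-1)}$, and your observation that the argument applies to $-u_\lambda$ as well (by oddness of the nonlinearities) is a point the paper leaves implicit.
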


\medskip

	On the nonexistence of solutions for Problem \eqref{P1}, we mention the result proved by B.~Hu \cite{Hu} for the particular case that $\lambda=0$  and $p=2$. Here, we complete his analysis by using a Poho\v{z}aev type identity. Indeed, we show the nonexistence results of weak solutions stated as follows.

 \medskip
	
	\begin{theorem}\label{T0} Let $u_\lambda\in E\cap C^1(\overline{\mathbb{R}^{N}_+})$ be a weak solution of Problem \eqref{P1}. Then $u_\lambda\equiv0$ if one of the conditions hold
 \begin{flushleft}
{$\mathbf {i)}$}   If $\lambda=0$ and $q\in[p,p_*)\cup(p_*,+\infty)$,\\
{$\mathbf {ii)}$} If $\lambda>0$ and $q\in[p_*,+\infty)$,\\
{$\mathbf {iii)}$} If $\lambda<0$ and $q\in[p,p_*]$.
  \end{flushleft}
Moreover, when the solution $u_\lambda$ is nontrivial, it holds
		$\lambda\leq\|u_\lambda\|^{q-p}_{L^\infty(\mathbb{R}^{N-1})}$.
	\end{theorem}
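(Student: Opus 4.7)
My plan is to combine a Poho\v{z}aev-type identity for \eqref{P1}, obtained by multiplying $\Delta_p u=0$ by the dilation field $x\cdot\nabla u$ and integrating by parts, with the energy identity obtained by testing \eqref{Def1} against $u$ itself. Together these will yield a single algebraic relation whose sign structure forces $u\equiv 0$ in each of the three regimes, and a pointwise $L^\infty$ estimate applied to the energy identity will give the supremum bound.

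First, I would establish the Poho\v{z}aev identity. Since $u\in C^1(\overline{\mathbb{R}^N_+})$, the pointwise identity $|\nabla u|^{p-2}\nabla u\cdot\nabla(x\cdot\nabla u)=|\nabla u|^p+\tfrac{1}{p}\,x\cdot\nabla(|\nabla u|^p)$ lets one integrate $\Delta_p u\,(x\cdot\nabla u)$ by parts on the half-ball $B_R^+$. After using the boundary condition on the flat part $\Sigma_R=B_R'\times\{0\}$ (where $x\cdot\nu=0$ and $x\cdot\nabla u=x'\cdot\nabla_{x'}u$) and an additional integration by parts in $x'$, one obtains a truncated identity. Inserting a radial cutoff $\eta_R\in C_c^\infty$ supported in $B_{2R}$ with $\eta_R\equiv 1$ on $B_R$ makes the interior error terms from $\nabla\eta_R$ and the boundary contributions on $\partial B_R^+$ and $\partial B_R'$ dominated by $\int_{R\leq|x|\leq 2R}|\nabla u|^p\,dx$ and $\int_{R\leq|x'|\leq 2R}(|u|^p+|u|^q)\,dx'$. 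Since $u\in E$, and testing \eqref{Def1} with a cutoff of $u$ already yields the energy identity
\[
A+\lambda B=C,\qquad A:=\int_{\mathbb{R}^N_+}|\nabla u|^p\,dx,\quad B:=\int_{\mathbb{R}^{N-1}}|u|^p\,dx',\quad C:=\int_{\mathbb{R}^{N-1}}|u|^q\,dx',
\]
these annular integrals vanish as $R\to\infty$ by absolute continuity of the integral, and passing to the limit gives the Poho\v{z}aev identity $\tfrac{N-p}{p}A=\tfrac{N-1}{q}C-\tfrac{\lambda(N-1)}{p}B$.

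Next, I would eliminate $C$ between the two identities to obtain
\[
\bigl[q(N-p)-p(N-1)\bigr]\,A=\lambda(N-1)(p-q)\,B,\qquad(\star)
\]
and conclude case by case. In \textbf{i)} ($\lambda=0$, $q\neq p_*$) the right side of $(\star)$ is zero while the bracket is nonzero, forcing $A=0$; then $u$ is constant and $u\in E$ gives $u\equiv 0$. In \textbf{ii)} ($\lambda>0$, $q\geq p_*$) the bracket is $\geq 0$ while $\lambda(p-q)<0$, so both sides of $(\star)$ must vanish, and $u\equiv 0$ follows via the energy identity. In \textbf{iii)} ($\lambda<0$, $q\in[p,p_*]$) the bracket is $\leq 0$ while $\lambda(p-q)\geq 0$, again forcing both sides to be zero; the degenerate endpoint $q=p$ is resolved directly from $A=(1-\lambda)B$ with $1-\lambda>0$. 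For the supremum bound, if $u\not\equiv 0$ then $B>0$, and inserting the estimate $\int|u|^q\leq\|u\|_{L^\infty(\mathbb{R}^{N-1})}^{q-p}\int|u|^p$ into the energy identity yields $\lambda\leq\|u\|_{L^\infty(\mathbb{R}^{N-1})}^{q-p}-A/B\leq\|u\|_{L^\infty(\mathbb{R}^{N-1})}^{q-p}$.

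The main obstacle is the rigorous justification of the Poho\v{z}aev identity in the first step: on the unbounded half-space the multiplier $x\cdot\nabla u$ is not globally integrable against $\Delta_p u$, so one must truncate and carefully control both the interior error and the boundary contributions on the spherical cap $\partial B_R\cap\mathbb{R}^N_+$ and on $\partial B_R'\subset\mathbb{R}^{N-1}$, without invoking any a priori pointwise decay (Theorem~\ref{T-Regularity-Decay} is available only in the subcritical range $p<q<p_*$). The key observation is that all error terms reduce to integrals of $L^1$ quantities over annuli of scale $R$, which vanish as $R\to\infty$ by the absolute continuity of the integral.
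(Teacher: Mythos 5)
Your overall strategy is the same as the paper's: derive a Poho\v{z}aev identity via the dilation multiplier $x\cdot\nabla u$, combine it with the energy identity obtained by testing against $u$ itself, and then read off a sign contradiction in each of the three regimes; the sup bound likewise comes from the energy identity plus the pointwise inequality $|u|^q\leq\|u\|_{L^\infty}^{q-p}|u|^p$ on the boundary. The only cosmetic difference is that you eliminate $C$ to get a relation between $A$ and $B$, while the paper eliminates $A$ to get $\lambda\tfrac{p-1}{p}B=\bigl(\tfrac{N-1}{q}-\tfrac{N-p}{p}\bigr)C$, but these are linearly equivalent and the case analysis runs in parallel. Your identity $(\star)$ is correct.

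There is, however, a genuine gap in the first step, which is exactly the one the paper isolates into a separate proposition. You write that ``since $u\in C^1(\overline{\mathbb{R}^N_+})$, the pointwise identity $|\nabla u|^{p-2}\nabla u\cdot\nabla(x\cdot\nabla u)=|\nabla u|^p+\tfrac1p\,x\cdot\nabla(|\nabla u|^p)$ lets one integrate by parts.'' This identity involves second derivatives of $u$ (through $\nabla(x\cdot\nabla u)$ and $\nabla|\nabla u|^p$), and for $p\neq 2$ weak solutions of $\Delta_p u=0$ are in general only $C^{1,\alpha}$; they need not be $C^2$, so the pointwise identity does not hold off the critical set $\{\nabla u=0\}$ in the classical sense. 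You identify unboundedness of the domain as ``the main obstacle'' and address it by the cutoff/annulus argument, but you never address this regularity obstacle, which is the one that is actually hard. The paper handles it in Proposition~\ref{Pohozaev} by imposing the extra hypothesis $|\nabla u|^{p-1}\in W^{1,1}_{\mathrm{loc}}(\mathbb{R}^N_+)$ — a nontrivial fact quoted from Lou \cite{Lou} — and then applies a version of the divergence theorem valid for $W^{1,1}$ vector fields (cf.\ \cite[Lemma A.1]{CT}), working on the open set where $\nabla u\neq 0$ and verifying that the divergence identity holds distributionally on all of $\Omega_R$. Without invoking this (or an approximation scheme \`a la \cite{DMS}), your integration by parts is not justified.

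Two smaller remarks. First, in case iii) at the endpoint $q=p$, the energy identity $A=(1-\lambda)B$ alone does not force $A=B=0$; it merely says $A$ and $B$ are proportional with a positive constant. What does the work is $(\star)$: at $q=p$ the right side vanishes identically while the bracket equals $p(1-p)\neq0$, so $A=0$, and then the energy identity gives $B=0$. Second, when $q>p_*$ (part of case ii)) the finiteness of $C=\int_{\mathbb{R}^{N-1}}|u|^q\,\mathrm{d}x'$ is not automatic from $u\in E$, since the trace embedding $E\hookrightarrow L^q(\mathbb{R}^{N-1})$ only holds for $q\leq p_*$; both you and the paper implicitly assume $G(u)\in L^1(\mathbb{R}^{N-1})$ (it is a stated hypothesis of Proposition~\ref{Pohozaev}), and this should at least be flagged.
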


\medskip
 
The celebrated moving plane method (MPM) due to A. Alexandrov \cite{alex,alex2}  is an essential tool to study the symmetry of solutions for nonlinear elliptic problems.  See the classical works of  J. Serrin \cite{serrin}, and B. Gidas, M. Ni and L. Nirenberg \cite{gnn}.  The ideas used to study the Laplacian case can not be applied directly to analyze symmetry for solutions of the $ p-$Laplacian equations because this elliptic operator is singular or degenerate if $1 < p < 2$ or $p > 2$, respectively, on the critical set $\{ \nabla u=0 \}$, and comparison principles are not equivalent to maximum principles, as for the semilinear case.  In \cite{damas,damas2}, the authors have used some adaptation of the moving plane technique to study the case of the p-Laplace operator in bounded domains. For the case when the domain is the half-space, symmetric properties of positive solutions to $-\Delta_pu = f (u)$ with zero Dirichlet assumption was proved in \cite{farina,farina2}.  We also mention an important result of symmetry related to the Laplacian equation with nonlinear boundary conditions is due to Y.~Li and M.~Zhu \cite{yanyan}. They used the method of moving spheres, a variant of the method of moving planes, to prove that nontrivial nonnegative solutions of boundary value problems of the form $-\Delta u=f(u)$ in $\mathbb{R}^N_+$ satisfying $\partial u/\partial x_N=g(u)$ on $\partial{\mathbb{R}}^N_+$ must take a specific form. In particular, for $N\geq3$, they proved that the solution must take the form $u(x^\prime,x_N)=(\epsilon/[\epsilon^2+|(x^\prime,x_N)-(x^\prime_0,x_{N_0})|^2])^{(N-2)/2}$ with $\epsilon>0$ if $f(u)=$ $N(N-2)u^{(N+2)/(N-2)}$ and $g(u)=cu^{N/(N-2)}$. For more results about existence, symmetry, and qualitative properties for semilinear elliptic problems with nonlinear boundary conditions, we refer the reader to \cite{harada,terra,chipot}.
We quote here that for $p=2$, E.~Abreu et al. \cite{EMEDOOEVE2010} proved that Problem \eqref{P1} has a ground state solution $w$ which is radially symmetric with respect to $x^\prime\in \mathbb{R}^{N-1}$, and decay to zero like $x_N^{2-N}$ at infinity.

Next, we state our symmetric result for positive solutions of \eqref{P1}, which  is the first one when the domain is the half-space and involves nonlinear boundary conditions to the best of our knowledge.
\medskip

	\begin{theorem}\label{simetria}
		Let $u$ be a positive weak solution of Problem \eqref{P1} with $p< q< p_*$. Then $u$ is radially symmetric with respect to $(N-1)$ first variables, that is 
		\[u(x^\prime,x_N)=u(r,x_N), \ \forall\ (x^\prime,x_N)\in\mathbb{R}^{N-1}\times\mathbb{R}_+, \ \mbox{with}\ \ |x^\prime|=r.\]
	\end{theorem}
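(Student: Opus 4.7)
The plan is to apply the moving plane method along directions parallel to $\partial\mathbb{R}^{N}_{+}$, following the framework developed for the $p$-Laplacian by Damascelli and collaborators \cite{damas,damas2,farina,farina2}, adapted to the nonlinear Robin-type boundary condition appearing in \eqref{P1}. Fix an arbitrary direction $e\in\mathbb{R}^{N-1}$; by translation invariance of \eqref{P1} in the $x'$-variables we may assume $e=e_{1}$. For $\mu\in\mathbb{R}$, set $T_{\mu}=\{x_{1}=\mu\}$, $\Sigma_{\mu}=\{x\in\mathbb{R}^{N}_{+}:x_{1}<\mu\}$, $x^{\mu}=(2\mu-x_{1},x_{2},\dots,x_{N})$, and $u_{\mu}(x)=u(x^{\mu})$. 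Because the operator, the parameter $\lambda$ and the boundary nonlinearity are invariant under $x_{1}\mapsto 2\mu-x_{1}$, the reflected function $u_{\mu}$ solves the same PDE in $\Sigma_{\mu}$ with the same boundary condition on $\Sigma_{\mu}\cap\partial\mathbb{R}^{N}_{+}$, so the task reduces to comparing $u$ and $u_{\mu}$ on $\Sigma_{\mu}$.

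To start the sliding from $\mu\to -\infty$, I would use the decay estimate $u(x)=O(|x|^{(p-N)/(p-1)})$ from Theorem~\ref{T-Regularity-Decay}, which forces $\|u\|_{L^{\infty}(\Sigma_{\mu})}$ and the trace norm $\|u\|_{L^{\infty}(\Sigma_{\mu}\cap\partial\mathbb{R}^{N}_{+})}$ to be small for $\mu$ very negative. Testing the difference of the equations for $u$ and $u_{\mu}$ against $(u-u_{\mu})^{+}$ and applying the weak comparison principle of \cite{damas,damas2} yields $u\le u_{\mu}$ on $\Sigma_{\mu}$ for all sufficiently negative $\mu$; the boundary term is absorbed by the trace Sobolev inequality because $q>p$ makes the Robin nonlinearity superlinear, so a factor of $\|u\|_{L^{\infty}(\partial\mathbb{R}^{N}_{+}\cap\Sigma_{\mu})}^{q-p}$ multiplies the perturbation and can be made arbitrarily small.

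Next define $\lambda_{1}:=\sup\{\mu\in\mathbb{R} : u\le u_{\nu}\text{ in } \Sigma_{\nu}\text{ for every }\nu\le\mu\}$. By continuity in $\mu$, $u\le u_{\lambda_{1}}$ on $\Sigma_{\lambda_{1}}$. One must have $\lambda_{1}<+\infty$: otherwise $u$ would be non-decreasing in $x_{1}$ on all of $\mathbb{R}^{N}_{+}$, which contradicts both the decay at infinity and positivity of $u$. The critical step is to prove $u\equiv u_{\lambda_{1}}$ in $\Sigma_{\lambda_{1}}$. Arguing by contradiction, the strong comparison principle for the $p$-Laplacian of \cite{damas2} gives $u<u_{\lambda_{1}}$ in each connected component of $\Sigma_{\lambda_{1}}$ disjoint from the critical set $\{\nabla u=0\}$. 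A standard perturbation argument then slides the plane to $\lambda_{1}+\varepsilon$ and splits $\Sigma_{\lambda_{1}+\varepsilon}$ into a large compact core $K$, where continuity and the strict inequality at $\lambda_{1}$ preserve $u<u_{\lambda_{1}+\varepsilon}$, and an exterior piece $\Sigma_{\lambda_{1}+\varepsilon}\setminus K$ on which the weak comparison principle applies thanks to the decay estimate of Theorem~\ref{T-Regularity-Decay}. This contradicts the maximality of $\lambda_{1}$ and yields $u=u_{\lambda_{1}}$.

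Running the same argument with $-e_{1}$ produces a critical value that must coincide with $\lambda_{1}$, so $u$ is genuinely symmetric with respect to $T_{\lambda_{1}}$. Since $e$ was an arbitrary direction in $\mathbb{R}^{N-1}$, performing the construction for every $e\in S^{N-2}\subset\mathbb{R}^{N-1}$ gives a family of symmetry hyperplanes whose compositions generate enough rotations in $\mathbb{R}^{N-1}$ to force $u(\cdot,x_{N})$ to be radially symmetric in $x'$ about a common point, which is the desired conclusion. The principal obstacle is the degeneracy/singularity of $\Delta_{p}$ on the critical set $\{\nabla u=0\}$: the weak and strong comparison principles of \cite{damas,damas2} must be adapted to unbounded caps in $\mathbb{R}^{N}_{+}$ and coupled with an absorption of the nonlinear Robin-type boundary term, and the decay estimate of Theorem~\ref{T-Regularity-Decay} is essential both to initialise the procedure and to close the perturbation step at infinity.
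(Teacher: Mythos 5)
Your overall scheme — moving planes along directions parallel to $\partial\mathbb{R}^N_+$, initialized at infinity via the decay of Theorem~\ref{T-Regularity-Decay}, a critical plane position, and repetition over all directions of $\mathbb{R}^{N-1}$ — is the same as the paper's. The gap lies in the two comparison steps, which you delegate to the weak and strong comparison principles of \cite{damas,damas2}: those results concern the interior Dirichlet problem on bounded domains and say nothing about a cap $\Sigma_\mu$ whose boundary contains a portion of $\{x_N=0\}$ carrying the nonlinear condition $|\nabla u|^{p-2}\partial_\nu u=|u|^{q-2}u-\lambda|u|^{p-2}u$. When you test the difference of the equations for $u$ and $u_\mu$ against $(u-u_\mu)^+$, a boundary integral involving $\bigl[(u^{q-1}-u_\mu^{q-1})-\lambda(u^{p-1}-u_\mu^{p-1})\bigr](u-u_\mu)^+$ survives, and your proposed absorption ``by the trace Sobolev inequality since $q>p$'' is precisely what has to be proved. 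The paper handles this term by an elementary monotonicity observation (Lemma~\ref{negative}: $t\mapsto a^t-b^t$ is decreasing for $0<b<a<e^{-1/p}$), which applies only because the decay estimate makes the trace of $u$ small on the relevant region; this is where the decay and the choice of starting planes really enter, and it is not a routine adaptation of the bounded-domain Dirichlet results.

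The more serious issue is your critical-position step. For the $p$-Laplacian the strong comparison principle fails, in general, across the critical set $\{\nabla u=0\}$, so ``strict inequality on each component disjoint from the critical set'' plus ``a standard perturbation argument'' does not close: one needs the fine analysis of \cite{damas2,farina,farina2} (measure estimates of the critical set, local symmetry regions, connectivity arguments), and none of those works covers a nonlinear Neumann boundary condition, for which one would additionally need a Hopf-type boundary lemma compatible with the condition on $\{x_N=0\}$. You acknowledge that these principles ``must be adapted to unbounded caps and coupled with an absorption of the boundary term,'' but that adaptation is the main new difficulty of the theorem, so as written the argument is incomplete. The paper avoids this machinery entirely: at the critical plane it runs a case analysis on $u_{x_1}$ along the hyperplane $T_\Lambda$, using V\'azquez's strong maximum principle \cite{vazquez} in one case and Picone's identity \cite{alegretto} in the other to force $u=\theta u_\Lambda$ and then a contradiction with the decay. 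If you wish to keep your route, you must prove half-space versions of the weak and strong comparison principles that incorporate the nonlinear boundary term. Two minor points: your conclusion (radial symmetry about some point in the $x'$-variables) is the correct invariant formulation, and the claim that the critical values obtained from $e_1$ and $-e_1$ coincide also needs a short justification using monotonicity and the decay.
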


	\subsection{Outline}
	The paper is organized as follows. The following section brings preliminary results and a variational framework related to \eqref{P1}. In Section \ref{existence}, we use the min-max argument to prove Theorem \ref{T-Existence}. Section \ref{reg} establishes the $L^\infty$-estimate based on Moser’s iteration, a Harnack type inequality, and Theorem \ref{T-Regularity-Decay}. Section \ref{poho} is devoted to proving a Poho\v{z}aev type identity and establishing a nonexistence result for positive regular decaying solutions \eqref{P1}. The last section is devoted to proving symmetry for solutions to \eqref{P1} using the celebrated method of moving planes.
	\section{Preliminary results} 
	First, we give a brief presentation of the key ingredients of the analysis and recall the main properties of some Sobolev spaces. 
	As was already mentioned, the first goal of this work is to show the existence of positive ground state solutions to \eqref{P1}, by using variational methods based on variants of the minimax theorem. For that, it is natural to consider the  associated functional  $I:E \rightarrow \mathbb{R}$ given by 
		\[
	I(u)=\frac{1}{p}\int_{\mathbb{R}_{+}^{N}}\vert \nabla u \vert^p\,\mathrm{d}x+\frac{\lambda}{p}\int_{\mathbb{R}^{N-1}}\vert u\vert^p\,\mathrm{d}x'-\frac{1}{q}\int_{\mathbb{R}^{N-1}}(u^+)^q\,\mathrm{d}x',
	\]
	where $u^+(x)=\max\left\{u(x),0\right\}$. The functional $I$ is well defined in view of the embedding
	\begin{equation}\label{emb1}
	   E\hookrightarrow L^{q}(\mathbb{R}^{N-1})\quad\mbox{for any}\quad p\leq q\leq p_*. 
	\end{equation}
	Using Gagliardo-Nirenberg-Sobolev inequality, we have the continuous embedding 
	\begin{equation}\label{Escobar}
		E\hookrightarrow\mathcal{D}^{1,p}(\mathbb{R}_{+}^{N}),
	\end{equation}
	which, together with the trace embedding
	$$ 
	\mathcal{D}^{1,p}(\mathbb{R}_{+}^{N})\hookrightarrow L^{p_*}(\mathbb{R}^{N-1}),\quad p_*:=\frac{(N-1)p}{N-p},
	$$
	gives $	E\hookrightarrow L^{p_*}(\mathbb{R}^{N-1}).$ Since by definition $E\hookrightarrow L^{p}(\mathbb{R}^{N-1}),$ one can apply interpolation inequality in Lebesgue spaces to get  \eqref{emb1}.
	
	Using standard arguments, we can see that $I\in C^1(E,\mathbb{R})$ and its critical points correspond to weak solutions of \eqref{P1}. Next, we prove a density result involving the space $E$.
	
	\begin{lemma}\label{este2} $W^{1,p}(\mathbb{R}^N_+)$ is dense in $E$.
	\end{lemma}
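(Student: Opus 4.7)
Since any $u\in W^{1,p}(\mathbb{R}^N_+)$ belongs to $\mathcal{D}^{1,p}(\mathbb{R}^N_+)$ and its trace lies in $W^{1-1/p,p}(\mathbb{R}^{N-1})\hookrightarrow L^p(\mathbb{R}^{N-1})$, the inclusion $W^{1,p}(\mathbb{R}^N_+)\subset E$ is clear and the content of the lemma is the density. My plan is a one-parameter cut-off truncation at infinity: given $u\in E$, set $u_R:=\varphi_R u$ where $\varphi\in C_c^\infty(\mathbb{R}^N)$ satisfies $\varphi\equiv 1$ on $B_1$, $\varphi\equiv 0$ outside $B_2$, $0\le\varphi\le 1$, and $\varphi_R(x):=\varphi(x/R)$. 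The goal is to prove that $u_R\in W^{1,p}(\mathbb{R}^N_+)$ for every $R>0$ and that $u_R\to u$ in $E$ as $R\to\infty$.

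The first step is to check that $u_R\in W^{1,p}(\mathbb{R}^N_+)$. Because $u\in\mathcal{D}^{1,p}(\mathbb{R}^N_+)$ and $1<p<N$, the Sobolev inequality gives $u\in L^{p^*}(\mathbb{R}^N_+)$ with $p^*=Np/(N-p)$. Since $u_R$ has support contained in the bounded set $\overline{B_{2R}}\cap\overline{\mathbb{R}^N_+}$, Hölder's inequality yields $u_R\in L^p(\mathbb{R}^N_+)$; the identity $\nabla u_R=\varphi_R\nabla u+u\nabla\varphi_R$ and the same argument applied to $u\nabla\varphi_R$ give $\nabla u_R\in L^p(\mathbb{R}^N_+)$.

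The second step is the convergence $u_R\to u$ in $E$, split into the gradient part and the trace part. For the boundary piece, $u_R-u=(\varphi_R-1)u$ on $\mathbb{R}^{N-1}$, and since $u\in L^p(\mathbb{R}^{N-1})$ and $|\varphi_R-1|\le 1$ with $\varphi_R\to 1$ pointwise, dominated convergence yields $\|u_R-u\|_{L^p(\mathbb{R}^{N-1})}\to 0$. For the gradient piece,
\[
\nabla u_R-\nabla u=(\varphi_R-1)\nabla u+u\nabla\varphi_R,
\]
and the first term tends to $0$ in $L^p(\mathbb{R}^N_+)$ by dominated convergence. The delicate term is $u\nabla\varphi_R$: one has $|\nabla\varphi_R|\le C/R$ supported in the annulus $A_R:=\{R\le|x|\le 2R\}$, so by Hölder with exponents $p^*/p$ and its conjugate,
\[
\int_{A_R\cap\mathbb{R}^N_+}|u\nabla\varphi_R|^p\,\mathrm{d}x\le \frac{C^p}{R^p}\,|A_R|^{\,1-p/p^*}\!\left(\int_{A_R\cap\mathbb{R}^N_+}|u|^{p^*}\mathrm{d}x\right)^{p/p^*}.
\]
Since $|A_R|\sim R^N$ and $1-p/p^*=p/N$, the prefactor $R^{-p}|A_R|^{1-p/p^*}$ is bounded uniformly in $R$, while $\int_{A_R}|u|^{p^*}\to 0$ because $u\in L^{p^*}(\mathbb{R}^N_+)$; this gives $\|u\nabla\varphi_R\|_{L^p(\mathbb{R}^N_+)}\to 0$.

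The main (and essentially only) obstacle is controlling $u\nabla\varphi_R$ without assuming $u\in L^p(\mathbb{R}^N_+)$; the scale matching between $|\nabla\varphi_R|^p$ and $|A_R|^{1-p/p^*}$ through the Sobolev exponent $p^*$ is what makes the argument work and is the reason the restriction $1<p<N$ enters. Combining the three convergences yields $u_R\to u$ in $E$, proving the density.
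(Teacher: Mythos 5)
Your argument is correct and follows essentially the same route as the paper: cut off at infinity with $\varphi_R$, note $\varphi_R u\in W^{1,p}(\mathbb{R}^N_+)$, and control the problematic term $u\nabla\varphi_R$ by H\"older on the annulus together with the embedding $\mathcal{D}^{1,p}(\mathbb{R}^N_+)\hookrightarrow L^{p^*}(\mathbb{R}^N_+)$. In fact your treatment of that term (bounded prefactor $R^{-p}|A_R|^{p/N}$ combined with the vanishing tail $\int_{A_R}|u|^{p^*}\to 0$) is the cleaner bookkeeping, since the paper's bound $|A_{R,2R}^+|\leq \widetilde C_N R^{N-1}$ is a slip (the volume is of order $R^N$), and your version of the estimate is exactly what repairs it.
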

	\begin{proof}
		For $R>0$ let us consider $\varphi_R \in C^{\infty}_{0}(\mathbb{R}^N, \mathbb{R})$ such that $0\leq\varphi_R\leq1, \; |\nabla\varphi_R|\leq1/R$ and
		$$
		\varphi_R(x',x_N)=
		\left\{
		\begin{aligned} &1,\quad\mbox{if}\quad  |(x',x_N)|\leq R,\\
		&0\quad\mbox{if}\quad|(x',x_N)|\geq 2R,
		\end{aligned}
		\right.
		$$
		Given
		$u\in E$ using \eqref{Escobar},  we have  $u_R=u\varphi_R\in W^{1,p}(\mathbb{R}^N_+)$ and
		$$
		\begin{aligned}
		\|u-u_R\|^p_{E}&=\int_{\mathbb{R}^N_+}|\nabla u-\nabla
		u_R|^p\,\mathrm{d}x+\int_{\mathbb{R}^{N-1}}|u-u_R|^p\,\mathrm{d}x^\prime\\
		&=\int_{(B_R^+)^C}|\nabla u-\nabla u_R|^p\,\mathrm{d}x+\int_{{|x'|>R}}|u-u_R|^p\,\mathrm{d}x^\prime\\
		&\leq \int_{(B_R^+)^C}|\nabla u|^p\,\mathrm{d}x+\int_{(B_R^+)^C}|\nabla
		u_R|^p\,\mathrm{d}x+C\int_{{|x'|>R}}|u|^p\,\mathrm{d}x^\prime\\
		&=o_R(1)+\int_{(B_R^+)^C}|\nabla
		u_R|^p\,\mathrm{d}x+o_R(1),
		\end{aligned}
		$$
		where $o_R(1)\rightarrow0$ as $R\rightarrow\infty, $ \begin{equation*}
		 B_R^+:=\{x\in \mathbb{R}^N_+: |(x^\prime,x_N)|\leq R\} \text{ and } (B_R^+)^C:=\{x \in  \mathbb{R}^N_+: |(x^\prime,x_N)| > R\} \}.
		\end{equation*}
	    We claim that
		\[
		\int_{(B_R^+)^C}|\nabla u_R|^p\,\mathrm{d}x=o_R(1) \text{ as }R\rightarrow \infty.
		\]
		Indeed,
		$$
		\begin{aligned}
		\int_{(B_R^+)^C}|\nabla u_R|^p\,\mathrm{d}x&\leq
		C\left(\int_{(B_R^+)^C}|\nabla
		u|^p\varphi_R^p\,\mathrm{d}x+\int_{(B_R^+)^C}|u|^p|\nabla\varphi_R|^p\,\mathrm{d}x\right)\\
		&\leq C\left(\int_{(B_R^+)^C}|\nabla
		u|^p\,\mathrm{d}x+\frac{1}{R^p}\int_{A_{R,R+1}^+}|u|^p\,\mathrm{d}x\right)\\
		&=o_R(1)+\frac{C}{R^p}\int_{A_{R,2R}^+}|u|^p\,\mathrm{d}x,
		\end{aligned}
		$$
		where, $A_{R,2R}^+:=\{(x',x_N)\in\mathbb{R}^N_+; R^2\leq|x'|^2+x_N^2\leq 4R^2 \}$.	
		Using embedding \eqref{Escobar} we see that
		$$
		\begin{aligned}
		\frac{1}{R^p}\int_{A_{R,2R}^+}|u|^p\,
		\mathrm{d}x&\leq\frac{1}{R^p}
		\left(\int_{A_{R,2R}^+}|u|^{p^*}\,
		\mathrm{d}x\right)^{(N-p)/N}
		\left(\int_{A_{R,2R}^+}\,\mathrm{d}x \right)^{p/N}\\
		&\leq\frac{1}{R^p}\|u\|_{E}^p\left(\int_{A_{R,2R}^+}\,\mathrm{d}x \right)^{p/N},
		\end{aligned}
		$$
		where $1/p^*=1/p+1/N$. Since
		$$
		\int_{A_{R,2R}^+}\mathrm{d}x=\frac{C_n}{2}[(2R)^N-R^N]=\frac{C_N}{2}N\theta^{N-1}\leq \widetilde{C}_NR^{N-1},
		$$
		we obtain
		$$
		\begin{aligned}
		\frac{1}{R^p}\int_{A_{R,2R}^+}|u|^p\mathrm{d}x&\leq \frac{C}{R^p}\left(R^{N-1}\right)^{p/N}=\frac{C}{R^{p/N}}\rightarrow 0\quad \mbox{as}\quad R\rightarrow\infty,
		\end{aligned}
		$$
		and this completes the proof of Lemma~\ref{este2}.
	\end{proof}
	
	\begin{lemma}\label{0006} The set of restrictions to $\mathbb{R}_+^N$ of functions in $C_0^\infty(\mathbb{R}^N)$ is dense in $E$.
	\end{lemma}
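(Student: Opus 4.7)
The plan is to combine Lemma~\ref{este2} with the classical density of $C_0^\infty(\mathbb{R}^N)$ in $W^{1,p}(\mathbb{R}^N)$. Concretely, given $u\in E$ and $\varepsilon>0$, by Lemma~\ref{este2} there exists $v\in W^{1,p}(\mathbb{R}^N_+)$ with $\|u-v\|_E<\varepsilon/2$, so it suffices to show that the restrictions to $\mathbb{R}^N_+$ of $C_0^\infty(\mathbb{R}^N)$ are dense, with respect to the $E$-norm, in $W^{1,p}(\mathbb{R}^N_+)$.

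To that end, I would first extend $v$ to $\tilde v\in W^{1,p}(\mathbb{R}^N)$ using the standard reflection extension across the hyperplane $\{x_N=0\}$, which is available since $\mathbb{R}^N_+$ is a Lipschitz extension domain. Because $C_0^\infty(\mathbb{R}^N)$ is dense in $W^{1,p}(\mathbb{R}^N)$, one can pick $\phi_n\in C_0^\infty(\mathbb{R}^N)$ with $\phi_n\to\tilde v$ in $W^{1,p}(\mathbb{R}^N)$. Restricting to $\mathbb{R}^N_+$ gives $\phi_n|_{\mathbb{R}^N_+}\to v$ in $W^{1,p}(\mathbb{R}^N_+)$, and in particular $\nabla\phi_n\to\nabla v$ in $L^p(\mathbb{R}^N_+)$.

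The second step is to control the boundary term in the $E$-norm. By the continuity of the trace operator $W^{1,p}(\mathbb{R}^N_+)\hookrightarrow L^p(\mathbb{R}^{N-1})$, we have
\[
\bigl\|\phi_n|_{\mathbb{R}^{N-1}}-v|_{\mathbb{R}^{N-1}}\bigr\|_{L^p(\mathbb{R}^{N-1})}\leq C\bigl\|\phi_n|_{\mathbb{R}^N_+}-v\bigr\|_{W^{1,p}(\mathbb{R}^N_+)}\longrightarrow 0,
\]
so that $\|\phi_n|_{\mathbb{R}^N_+}-v\|_E\to 0$. A triangle inequality together with the first step then yields some $\phi\in C_0^\infty(\mathbb{R}^N)$ with $\|\phi|_{\mathbb{R}^N_+}-u\|_E<\varepsilon$, completing the proof by a standard diagonal extraction.

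The only subtlety — and the place where one must be careful rather than inventive — is that the $E$-norm does \emph{not} control $\|u\|_{L^p(\mathbb{R}^N_+)}$, so the convergence $\phi_n\to v$ in the full $W^{1,p}(\mathbb{R}^N_+)$-norm is strictly stronger than what we need, but it is exactly this stronger convergence that activates the trace inequality and delivers the boundary $L^p$-convergence. No genuine obstacle arises; the argument is essentially a bookkeeping reduction to two well-known facts (the existence of an $W^{1,p}$-extension operator for the half-space and the continuity of the trace).
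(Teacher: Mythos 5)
Your proof is correct and follows essentially the same route as the paper: first approximate in $E$ by an element of $W^{1,p}(\mathbb{R}^N_+)$ via Lemma~\ref{este2}, then approximate that in the full $W^{1,p}(\mathbb{R}^N_+)$-norm by restrictions of $C_0^\infty(\mathbb{R}^N)$ functions, and finally use the trace inequality (equivalently, the embedding $W^{1,p}(\mathbb{R}^N_+)\hookrightarrow E$) to convert $W^{1,p}$-convergence into $E$-convergence. The only difference is that the paper simply cites the density of restrictions of $C_0^\infty(\mathbb{R}^N)$ in $W^{1,p}(\mathbb{R}^N_+)$ from \cite[Theorem 3.22]{adams}, whereas you prove it on the spot via the reflection extension operator and density of $C_0^\infty(\mathbb{R}^N)$ in $W^{1,p}(\mathbb{R}^N)$; both are valid, and your remark that the $E$-norm does not control $\|u\|_{L^p(\mathbb{R}^N_+)}$ is exactly the point addressed by the trace step.
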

	\begin{proof} Fixed $u\in {E}$ and $\varepsilon>0$, by Lemma~\ref{este2} there exists
		$u_1\in W^{1,p}(\mathbb{R}^N_+)$ such that
		\begin{equation}\label{esta1}
			\|u-u_1\|_{E}\leq \varepsilon.
		\end{equation}
		Using that the set of restrictions to $\mathbb{R}_+^N$ of functions in $C_0^\infty(\mathbb{R}^N)$  is dense
		in $W^{1,p}(\mathbb{R}^N_+)$ (see Theorem 3.22 \cite{adams}), there exists $v\in
		C_0^\infty(\mathbb{R}^N)$ such that
		$
		\|u_1-v\|_{W^{1,p}(\mathbb{R}^N_+)}\leq \varepsilon.
		$
		By the trace embedding theorem, we have
		$W^{1,p}(\mathbb{R}^N_+)\hookrightarrow E$. Consequently, we
		have
		\[
		\|u_1-v\|_{E}\leq C\|u_1-v\|_{W^{1,p}(\mathbb{R}^N_+)}\leq
		C\varepsilon.
		\]
		This, together with \eqref{esta1} implies
		$$
		\|u-v\|_{E}\leq\|u-u_1\|_{E}+\|u_1-v\|_{E}\leq\varepsilon+C\varepsilon,
		$$
		which completes the proof.
	\end{proof}

Let 
\[
S_\lambda(p,q)=\inf_{u\in C_0^\infty(\mathbb{R^N})\setminus\{0\}}\frac{
		\displaystyle\int_{\mathbb{R^N_+}}|\nabla u|^p \mathrm{d}  x+\lambda\int_{\mathbb{R}^{N-1}}|u|^p \mathrm{d}  x'}{\left(\displaystyle\int_{\mathbb{R}^{N-1}}|u|^q \mathrm{d}  x'\right)^{p/q}}.
\]

\begin{lemma}\label{111}  Assume that $1<p<N$. Then, 
	\[
		\lim_{\lambda\rightarrow0^+}S_\lambda(p,q)=0,
	\]
	whenever $p<q<p_*$ and 
	\begin{equation}
		\label{Comportamento constante-critica}
		\lim_{\lambda\rightarrow 0^+}S_\lambda(p,p_*)= S_0(p,p_*),
	\end{equation}
	where $S_0(p,p_*)$ is the best constant of the Sobolev trace embedding $\mathcal{D}^{1,p}(\mathbb{R}^N_+)\hookrightarrow L^{p^*}(\mathbb{R}^{N-1}).$
\end{lemma}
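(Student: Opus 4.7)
The plan is to bound $S_\lambda(p,q)$ from above by evaluating the Rayleigh quotient on a one-parameter family of dilated test functions. Fix any $\psi \in C_0^\infty(\mathbb{R}^N) \setminus \{0\}$ whose trace $\psi|_{\mathbb{R}^{N-1}}$ is not identically zero, and for $t>0$ set $\psi_t(x) := \psi(x/t)$. A direct change of variables gives
\[
\int_{\mathbb{R}^N_+} |\nabla \psi_t|^p \, \mathrm{d}x = t^{N-p} A, \qquad \int_{\mathbb{R}^{N-1}} |\psi_t|^r \, \mathrm{d}x' = t^{N-1} C_r \quad (r \in \{p,q\}),
\]
with $A, C_p, C_q > 0$ depending only on $\psi$. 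Hence the Rayleigh quotient factors as
\[
R_\lambda(\psi_t) := \frac{\int_{\mathbb{R}^N_+} |\nabla \psi_t|^p \, \mathrm{d}x + \lambda \int_{\mathbb{R}^{N-1}} |\psi_t|^p \, \mathrm{d}x'}{\bigl(\int_{\mathbb{R}^{N-1}} |\psi_t|^q \, \mathrm{d}x'\bigr)^{p/q}} = \frac{A \, t^{\alpha} + \lambda C_p \, t^{\beta}}{C_q^{p/q}},
\]
where $\alpha := N-p - (N-1)p/q$ and $\beta := (N-1)(q-p)/q$.

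For the first statement ($p<q<p_*$), the condition $q<p_*$ is equivalent to $(N-1)p/q > N-p$, i.e.\ $\alpha<0$, while $q>p$ gives $\beta>0$. Fix any $s \in (0, 1/\beta)$ and take $t = \lambda^{-s}$. Then
\[
R_\lambda(\psi_t) = \frac{A}{C_q^{p/q}}\, \lambda^{-s\alpha} + \frac{C_p}{C_q^{p/q}}\, \lambda^{\,1-s\beta},
\]
and since both exponents $-s\alpha$ and $1-s\beta$ are strictly positive, each term tends to $0$ as $\lambda \to 0^+$. Combining $0 \leq S_\lambda(p,q) \leq R_\lambda(\psi_t)$ yields the first claim.

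For the second statement, the lower bound $S_\lambda(p,p_*) \geq S_0(p,p_*)$ is immediate from dropping the nonnegative boundary term in the numerator. For the reverse inequality, fix $\varepsilon>0$ and, by the definition of $S_0(p,p_*)$, pick $\psi \in C_0^\infty(\mathbb{R}^N)$ with nontrivial trace such that $R_0(\psi) < S_0(p,p_*) + \varepsilon$. Since $q=p_*$ forces $\alpha=0$, the calculation above (with $t=1$) reduces to
\[
R_\lambda(\psi) = R_0(\psi) + \lambda \, \frac{C_p}{C_{p_*}^{p/p_*}} \; \xrightarrow[\lambda \to 0^+]{} \; R_0(\psi),
\]
so $\limsup_{\lambda\to 0^+} S_\lambda(p,p_*) \leq R_0(\psi) < S_0(p,p_*) + \varepsilon$, and letting $\varepsilon\to 0$ establishes \eqref{Comportamento constante-critica}. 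No serious analytic difficulty is expected; the only bookkeeping that requires care is the sign of $\alpha$, which records precisely the dichotomy between the subcritical regime ($\alpha<0$, where scaling drives the quotient to zero) and the critical regime ($\alpha=0$, where scaling preserves the leading term and only the perturbative $\lambda$-contribution disappears).
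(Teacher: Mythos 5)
Your proof is correct, and it takes a more direct route than the paper's. For the subcritical claim the paper argues by contradiction: since $\lambda\mapsto S_\lambda(p,q)$ is monotone the limit exists, and if it were some $C_0>0$, passing to the limit $\lambda\to0^+$ in the defining inequality would give $C_0\left(\int_{\mathbb{R}^{N-1}}|u|^q\,\mathrm{d}x'\right)^{p/q}\leq\int_{\mathbb{R}^N_+}|\nabla u|^p\,\mathrm{d}x$ for all $u\in C_0^\infty(\mathbb{R}^N)$, which the paper then dismisses as false ``by a standard scaling argument''. Your dilation computation is exactly that scaling argument made explicit, and by coupling the dilation to $\lambda$ through $t=\lambda^{-s}$ with $0<s<1/\beta$ you bypass the contradiction framework entirely and even obtain an explicit rate $S_\lambda(p,q)\leq C\left(\lambda^{-s\alpha}+\lambda^{1-s\beta}\right)$; your sign check $\alpha<0\iff q<p_*$ is the correct dichotomy. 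For the critical claim the two arguments coincide in substance: the bound $S_\lambda\geq S_0$ is immediate in both, and your $\varepsilon$-near-minimizer computation $R_\lambda(\psi)=R_0(\psi)+\lambda C_p/C_{p_*}^{p/p_*}\to R_0(\psi)$ is just the direct form of the paper's contradiction with the infimum defining $S_0(p,p_*)$. The only point worth a word in your write-up (harmless, and glossed over by the paper too) is that choosing $\psi\in C_0^\infty(\mathbb{R}^N)$ with $R_0(\psi)<S_0(p,p_*)+\varepsilon$ uses that the trace best constant over $\mathcal{D}^{1,p}(\mathbb{R}^N_+)$ can be approximated by such restrictions, which holds by the definition of $\mathcal{D}^{1,p}(\mathbb{R}^N_+)$ as a completion of those restrictions. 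In short, your version is self-contained and quantitative, while the paper's is shorter only because it delegates the key scaling step to the reader.
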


\begin{proof}
	Since $S_\lambda(p,q)$ is increasing, we can assume by contradiction that there exists a constant $C_0>0$ such that  $S_\lambda(p,q)>C_0>0$ for all $\lambda>0$. Thus, 
	$$
	C_0\left(\displaystyle\int_{\mathbb{R}^{N-1}}|u|^q \mathrm{d}  x'\right)^{p/q}\leq\left(\displaystyle\int_{\mathbb{R^N_+}}|\nabla u|^p \mathrm{d}  x+\lambda\int_{\mathbb{R}^{N-1}}|u|^p \mathrm{d}  x'\right),\quad \forall u\in C_0^\infty(\mathbb{R^N}).
	$$
	Now taking the limit as $\lambda\rightarrow0^+$ we obtain the inequality
	$$
	C_0\left(\displaystyle\int_{\mathbb{R}^{N-1}}|u|^q \mathrm{d}  x'\right)^{p/q}\leq\left(\displaystyle\int_{\mathbb{R^N_+}}|\nabla u|^p \mathrm{d}  x\right),\quad \forall u\in C_0^\infty(\mathbb{R^N}),
	$$
	which is false by using a standard scaling argument whenever $p<q<p_*$. To prove \eqref{Comportamento constante-critica} we observe that by definition of $S_0(p,p_*)$ we have that 
	$
	S_0(p,p_*)\leq S_\lambda(p,p_*)
	$
	for all $\lambda>0$. Since $S_\lambda(p,p_*)$ is a monotone function in $\lambda $ there exists 
	$$
	\lim_{\lambda\rightarrow0^+} S_\lambda(p,p_*)=L
	$$ 
	and hence $S_0(p,p_*)\leq\lim_{\lambda\rightarrow0^+} S_\lambda(p,p_*)$. Now suppose, by contradiction that $S_0(p,p_*)<L$. Then, for $\varepsilon=(L-S_0(p,p_*)/2$ there exists $\lambda_0>0$ such that for all $\lambda\in(0,\lambda_0)$ we have
	$$
	-\frac{L-S_0(p,p_*)}{2}=-\varepsilon<S_\lambda(p,p_*)-L<\varepsilon
	$$
	that is, 
	$$
	\frac{S_0(p,p_*)+L}{2}\leq S_\lambda(p,p_*)\leq\frac{
		\displaystyle\int_{\mathbb{R^N_+}}|\nabla u|^p \mathrm{d}  x+\lambda\int_{\mathbb{R}^{N-1}}|u|^p \mathrm{d}  x'}{\left(\displaystyle\int_{\mathbb{R}^{N-1}}|u|^q \mathrm{d}  x'\right)^{p/q}}
	$$
	taking the limit in the last inequality, we obtain
	
	$$
	S_0(p,p_*)<\frac{S_0(p,p_*)+L}{2}\leq S_\lambda(p,p_*)\leq\frac{
		\displaystyle\int_{\mathbb{R^N_+}}|\nabla u|^p \mathrm{d}  x}{\left(\displaystyle\int_{\mathbb{R}^{N-1}}|u|^q \mathrm{d}  x'\right)^{p/q}}
	$$
	and this contradicts the definition of $S_0(p,p_*)$. Therefore, we have that $S_0(p,p_*)=\lim_{\lambda\rightarrow0^+}S_\lambda(p,p_*)$.
\end{proof}

	\section{Existence of a ground state} \label{existence}
	This section is devoted to proving Theorem~\ref{T-Existence}. Fixed $0<r<\rho$ let us consider the balls centered in $y\in\mathbb{R}^{N-1}$ and   $(y,0)\in \partial \mathbb{R}^N_+$  with radius $\rho,$
	\[
	\quad\Gamma_\rho(y)=\left\{ x\in\mathbb{R}^{N-1}\text{ : }|x-y|<\rho \right\}\quad\text{and}\quad B_{\rho}^+(y,0)=\left\{ z\in \mathbb{R}_{+}^{N}\text{ : } |z-(y,0)|<\rho  \right\}.
	\]
	For the particular case which $y=0$ we use the notation $B_{\rho}^+=B_{\rho}^+(0)$, $\Gamma_\rho=\Gamma_\rho(0)$ and, let $B_{r}^+\subset B_{\rho}^+$, $\Gamma_r\subset\Gamma_\rho$ be concentric balls.

 \medskip

	\begin{lemma}[Friedrichs]\label{04}
		For all $u\in W^{1,p}(B_1^+)$ there exists a constant $C=C(N,p)>0$ such that 
		\begin{equation}\label{estima}
		 \Vert u \Vert_{L^p(B_1^+)}\leq C\left(\Vert \nabla u \Vert_{L^p(B_1^+)}^p + \Vert u \Vert_{L^p(\Gamma_1)}^p\right)^{1/p}.
		\end{equation}
	\end{lemma}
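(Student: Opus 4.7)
The plan is to argue by contradiction using the standard compactness-rescaling approach for Poincaré-type inequalities on Lipschitz domains. Suppose no such constant $C$ exists. Then for every $n \in \mathbb{N}$ there is some $u_n \in W^{1,p}(B_1^+)$ with
\[
\|u_n\|_{L^p(B_1^+)}^p > n\bigl(\|\nabla u_n\|_{L^p(B_1^+)}^p + \|u_n\|_{L^p(\Gamma_1)}^p\bigr).
\]
After the normalization $v_n := u_n / \|u_n\|_{L^p(B_1^+)}$, we get a sequence with $\|v_n\|_{L^p(B_1^+)} = 1$ while $\|\nabla v_n\|_{L^p(B_1^+)}^p + \|v_n\|_{L^p(\Gamma_1)}^p < 1/n$. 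In particular $(v_n)$ is bounded in $W^{1,p}(B_1^+)$.

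Next I would extract limits. Since $B_1^+$ is a bounded Lipschitz domain, $W^{1,p}(B_1^+)$ is reflexive and compactly embedded in $L^p(B_1^+)$; the trace operator $W^{1,p}(B_1^+) \to L^p(\Gamma_1)$ is continuous (and in fact compact). Passing to a subsequence, we may assume $v_n \rightharpoonup v$ weakly in $W^{1,p}(B_1^+)$, $v_n \to v$ strongly in $L^p(B_1^+)$, and $v_n|_{\Gamma_1} \to v|_{\Gamma_1}$ strongly in $L^p(\Gamma_1)$. The strong $L^p$ convergence forces $\|v\|_{L^p(B_1^+)} = 1$. Weak lower semicontinuity of the $L^p$ norm applied to the gradients gives
\[
\|\nabla v\|_{L^p(B_1^+)} \le \liminf_{n\to\infty} \|\nabla v_n\|_{L^p(B_1^+)} = 0,
\]
so $\nabla v = 0$ a.e. As $B_1^+$ is connected, $v$ must be a constant. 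Similarly the trace convergence yields $\|v|_{\Gamma_1}\|_{L^p(\Gamma_1)} = \lim \|v_n|_{\Gamma_1}\|_{L^p(\Gamma_1)} = 0$, so that constant is zero. This contradicts $\|v\|_{L^p(B_1^+)} = 1$ and proves the lemma, with the constant $C = C(N,p)$ depending only on the geometry of $B_1^+$.

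The only delicate point is the compactness/continuity of the trace operator $W^{1,p}(B_1^+) \to L^p(\Gamma_1)$ on the flat piece of the boundary. This is a standard consequence of the Lipschitz regularity of $\partial B_1^+$ near $\Gamma_1$ and is available in Adams' monograph; in fact the weaker fact of continuity is enough, since if one only has weak $L^p(\Gamma_1)$ convergence of the traces, then $\|v|_{\Gamma_1}\|_{L^p(\Gamma_1)} \le \liminf \|v_n|_{\Gamma_1}\|_{L^p(\Gamma_1)} = 0$ still suffices. No further estimates are needed, and the constant is implicit in the contradiction argument rather than computed explicitly.
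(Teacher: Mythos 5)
Your proposal is correct and follows essentially the same route as the paper: a contradiction--normalization argument, compactness of the embedding $W^{1,p}(B_1^+)\hookrightarrow L^p(B_1^+)$, and the trace to force the limit to be a constant vanishing on $\Gamma_1$. The only cosmetic difference is that you identify $\nabla v=0$ via weak lower semicontinuity of the gradient norm, while the paper does so by passing to the limit against test functions (integration by parts); both steps are equivalent here.
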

	
	\begin{proof}
		Suppose by contradiction that \eqref{estima} is false. Thus, for each $k\in\mathbb{N}$, there exist $u_k\in W^{1,p}(B_1^+)$ such that 
		\[
		\Vert u_k \Vert_{L^p(B_1^+)}\geq k\left(\Vert \nabla u_k \Vert_{L^p(B_1^+)}^p + \Vert u_k \Vert_{L^p(\Gamma_1)}^p\right)^{1/p}.
		\]
		Setting 
		\[
		v_k=\frac{u_k}{\left\Vert u_k \right\Vert_{L^p(B_{1}^+)}},
		\]
		we have
		\[
		\quad\Vert v_k \Vert_{L^p(B_1^+)}=1,\quad\Vert v_k \Vert_{L^p(\Gamma_1)}^p\leq\frac{1}{k},\quad\text{and}\quad\Vert\nabla v_k\Vert_{L^p(B_1^+)}\leq\frac{1}{k}.
		\]
		In particular, $(v_k)$ is bounded in $W^{1,p}(B_1^+)$ and, up to a subsequence, we have $v_{k}\rightarrow v$ strongly in $L^p(B_1^+)$ and hence $\Vert v \Vert_{L^p(B_1^+)}=1$. Moreover, by the trace embedding, $v_{k}\rightarrow v $ strongly in $L^q(\Gamma_1)$ with $q\in [p,p_*)$. From $\Vert v_k \Vert_{L^p(\Gamma_1)}^p\leq 1/k$, we get $v\equiv 0$ on $\Gamma_1$.
		
		\noindent Using integration by parts, for all $\phi\in C_{c}^{\infty}(B_1^+),$ we get
		\[
		\int_{B_1^+}v\phi_{x_i}\,\mathrm{d}x=\lim_{k_j\rightarrow +\infty}\int_{B_1^+} v_{k_j}\phi_{x_i}\,\mathrm{d}x=-\lim_{k_j\rightarrow +\infty}\int_{B_1^+}v_{k_j,x_i}\phi\,\mathrm{d}x=0.
		\]
		Consequently, $v\in W^{1,p}(B_1^+)$ and $\nabla v=0$ almost everywhere in $B_1^+$, which implies that $v$ is constant. Since $v\equiv 0$ on $\Gamma_1$, we have $v\equiv 0$ in $B_1^+$ which gives a contradiction with $\Vert v \Vert_{L^p(B_1^+)}=1$.
		\end{proof}
	
	\begin{lemma}\label{05}
		Let $q\in [p,p_*]$ and $y\in\mathbb{R}^{N-1}$. Then, there exists a constant $C=C(N,q)>0$ such that
		\[
		\Vert u \Vert_{L^q(\Gamma_1(y))}\leq C\left( \Vert\nabla u\Vert_{L^p(B_1^+(y))}^{p}+\Vert u \Vert_{L^p(\Gamma_1(y))}^{p} \right)^{1/p},\quad \forall u\in E.
		\]
	\end{lemma}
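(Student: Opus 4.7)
The plan is to combine two ingredients that are already available in the paper: the classical Sobolev trace embedding on the Lipschitz domain $B_1^+$, and the Friedrichs-type inequality just established in Lemma~\ref{04}.

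First I would reduce to the case $y=0$. All norms appearing in the inequality are invariant under translation by $(y,0)\in\partial\mathbb{R}^N_+$, so it suffices to treat $\Gamma_1=\Gamma_1(0)$ and $B_1^+=B_1^+(0)$. Then by the density statement of Lemma~\ref{0006} it is enough to verify the estimate for $u$ in the set of restrictions of $C_0^\infty(\mathbb{R}^N)$ functions to $\mathbb{R}^N_+$; the general case $u\in E$ follows by passing to the limit, since both sides are continuous with respect to the $E$-norm once one knows the trace on $\Gamma_1$ coming from $W^{1,p}(B_1^+)$ coincides with the trace coming from $E$.

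The main step is to use, on the Lipschitz domain $B_1^+$, the continuous trace embedding $W^{1,p}(B_1^+)\hookrightarrow L^q(\partial B_1^+)$ valid for $q\in[p,p_*]$. Since $\Gamma_1$ is a subset of $\partial B_1^+$ (its flat piece), restricting the $L^q$-norm from the whole boundary to $\Gamma_1$ only decreases it, which gives
\[
\|u\|_{L^q(\Gamma_1)}\le\|u\|_{L^q(\partial B_1^+)}\le C_1\bigl(\|\nabla u\|_{L^p(B_1^+)}^p+\|u\|_{L^p(B_1^+)}^p\bigr)^{1/p}.
\]
To eliminate the solid term $\|u\|_{L^p(B_1^+)}^p$ on the right, I apply Lemma~\ref{04}, which provides
\[
\|u\|_{L^p(B_1^+)}^p\le C_2\bigl(\|\nabla u\|_{L^p(B_1^+)}^p+\|u\|_{L^p(\Gamma_1)}^p\bigr).
\]
Substituting this into the previous bound and absorbing constants yields the desired inequality with a constant $C=C(N,q)$ depending only on $N$ and $q$ (the $p$-dependence is hidden in the absorbed constant).

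There is no real mathematical obstacle: once Lemma~\ref{04} and the standard trace embedding on $W^{1,p}(B_1^+)$ are in hand, the proof is essentially a two-line chaining of inequalities. The only point that requires care is the bookkeeping in the density argument — namely that the $L^q(\Gamma_1)$-trace appearing on the left, coming from the global trace $u|_{\mathbb{R}^{N-1}}\in L^p(\mathbb{R}^{N-1})$ defining $E$, agrees with the $W^{1,p}(B_1^+)$-trace used in the local embedding. This identification is trivial for functions in $C_0^\infty(\mathbb{R}^N)$ and is preserved under $E$-convergence by Lemma~\ref{0006}, which is why the smooth approximation step is placed at the very beginning.
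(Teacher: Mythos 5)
Your proof is correct and takes essentially the same route as the paper: the Sobolev trace embedding $W^{1,p}(B_1^+)\hookrightarrow L^q(\Gamma_1)$ (equivalently, to $\partial B_1^+$ followed by restriction) combined with the Friedrichs inequality of Lemma~\ref{04} to replace the solid $L^p(B_1^+)$ term. The additional remarks on translation invariance, density, and the identification of the $E$-trace with the $W^{1,p}(B_1^+)$-trace merely make explicit details the paper leaves implicit.
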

	
	\begin{proof}
		We can use the Sobolev trace embedding $W^{1,p}(B_1^+) \hookrightarrow L^q(\Gamma_1)$ and Lemma \ref{04} to obtain
		\[
		\Vert u \Vert_{L^q(\Gamma_1)}\leq C\Vert u \Vert_{W^{1,p}(B_1^+)}\leq C\left(\Vert \nabla u \Vert_{L^p(B_1^+)}^p + \Vert u \Vert_{L^p(\Gamma_1)}^p\right)^{},
		\]
		which is the desired inequality.
	\end{proof}
	
	Since $p<q<p_*$, it is standard to see that the  functional $I$ has the mountain pass structure on the Sobolev space $E$. Thus, the minimax level
	\[
	c_q(\mathbb{R}_{+}^{N})=\inf_{\gamma\in\Gamma}\max_{t\in [0,1]}I(\gamma(t)),
	\]
	is positive, where $\Gamma:=\left\{ \gamma\in C([0,1],E)\text{ : }\gamma(0)=0\text{ and }I(\gamma(1))<0 \right\}$ (see \cite{Vv2003} and references therein). Therefore, there exists a Palais-Smale (in short (PS)) sequence for the functional $I$ at the level $c_q(\mathbb{R}_{+}^{N})$ (see for instance \cite[Theorem 2.9]{willen} for more details). Precisely, there exists $(u_n)\subset E$ such that
	\[
	I(u_n)\rightarrow c_q(\mathbb{R}_{+}^{N})\quad\text{ and }\quad I^\prime(u_n)\rightarrow 0.
	\]
	
	\begin{lemma}\label{16}
		If $(u_n)\subset E$ is a $(PS)$ sequence at the minimax level $c_q(\mathbb{R}_{+}^{N})$, then $(u_n)$ is bounded in $E$. Moreover, for $n$  sufficiently large,
		\[
		\Vert u_{n}^+ \Vert_{L^q(\mathbb{R}^{N-1})}\geq \frac{pq}{q-p}\frac{c_q(\mathbb{R}_{+}^{N})}{2}.
		\]
	\end{lemma}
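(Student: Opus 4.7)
The plan is to apply the standard mountain pass test: combine $I(u_n)$ and $\langle I'(u_n), u_n\rangle$ in such a way that the subcritical growth $q>p$ gives a coercive estimate on $\|u_n\|_E$, and then use a second combination to extract a lower bound on $\|u_n^+\|_{L^q(\mathbb{R}^{N-1})}$.

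For the first assertion, I would compute
\begin{equation*}
I(u_n)-\frac{1}{q}\langle I'(u_n),u_n\rangle = \left(\frac{1}{p}-\frac{1}{q}\right)\!\left(\int_{\mathbb{R}^N_+}|\nabla u_n|^p\,\mathrm{d}x+\lambda\int_{\mathbb{R}^{N-1}}|u_n|^p\,\mathrm{d}x'\right)=\frac{q-p}{pq}\,C(\lambda)\|u_n\|_E^p,
\end{equation*}
where $C(\lambda):=\min\{1,\lambda\}>0$ since $\lambda>0$. Since $I(u_n)\to c_q(\mathbb{R}^N_+)$ and $\|I'(u_n)\|_{E^*}\to 0$, for $n$ large the left-hand side is bounded by $c_q(\mathbb{R}^N_+)+1+\frac{1}{q}\|u_n\|_E$. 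The resulting inequality
\begin{equation*}
\frac{q-p}{pq}C(\lambda)\|u_n\|_E^p \leq c_q(\mathbb{R}^N_+)+1+\frac{1}{q}\|u_n\|_E
\end{equation*}
forces $(u_n)$ to be bounded in $E$ because $p>1$.

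For the second assertion, the boundedness just proven gives $\langle I'(u_n),u_n\rangle = o(1)$, which yields
\begin{equation*}
\int_{\mathbb{R}^N_+}|\nabla u_n|^p\,\mathrm{d}x+\lambda\int_{\mathbb{R}^{N-1}}|u_n|^p\,\mathrm{d}x' = \int_{\mathbb{R}^{N-1}}(u_n^+)^q\,\mathrm{d}x'+o(1).
\end{equation*}
Substituting into $I(u_n)$ gives
\begin{equation*}
I(u_n) = \frac{q-p}{pq}\int_{\mathbb{R}^{N-1}}(u_n^+)^q\,\mathrm{d}x'+o(1),
\end{equation*}
so $\|u_n^+\|_{L^q(\mathbb{R}^{N-1})}^q = \frac{pq}{q-p}\,I(u_n)+o(1)\to \frac{pq}{q-p}c_q(\mathbb{R}^N_+)$, and for $n$ sufficiently large this quantity exceeds $\frac{pq}{q-p}\cdot\frac{c_q(\mathbb{R}^N_+)}{2}$, giving the stated estimate (understood with the $q$-th power on the left-hand side, consistent with the computation above).

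There is no substantial obstacle; the only point requiring care is the role of $\lambda>0$, which is what ensures that both the gradient term and the boundary $L^p$ term in $\|\cdot\|_E^p$ appear with positive weight in the key identity, so that subcritical coercivity translates into a bound on the full norm $\|u_n\|_E$ rather than only on $\|\nabla u_n\|_{L^p(\mathbb{R}^N_+)}$.
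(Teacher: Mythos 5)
Your proposal is correct and follows essentially the same route as the paper: combine $I(u_n)$ with $\tfrac{1}{q}I'(u_n)u_n$ to get boundedness from $q>p$ (the paper writes the equivalent combination $qI(u_n)-I'(u_n)u_n$), then use $I(u_n)-\tfrac{1}{p}I'(u_n)u_n=\bigl(\tfrac{1}{p}-\tfrac{1}{q}\bigr)\Vert u_n^+\Vert_{L^q(\mathbb{R}^{N-1})}^q$ to get the lower bound, with the $q$-th power on the left as you correctly observe. The only cosmetic point is that your middle ``equality'' with $C(\lambda)=\min\{1,\lambda\}$ should be the inequality $\geq C(\lambda)\Vert u_n\Vert_E^p$, which is the direction you actually use, and you treat the role of $\lambda>0$ a bit more carefully than the paper does.
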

	
	\begin{proof}
		A straightforward computation shows that 
		\[
		\left( \frac{q}{p}-1 \right)\Vert u_n \Vert_{E}^{p}=qI(u_n)-I^\prime(u_n)u_n\leq qc_q(\mathbb{R}_{+}^{N}) + \Vert u_n \Vert_E+1
		\]
		and hence  $(u_n)$ is bounded in $E$. Thus, $I^\prime(u_n)u_n\rightarrow 0$ and consequently for $n$ sufficiently large it holds
		\[
		\frac{c_q(\mathbb{R}_{+}^{N})}{2}\leq I(u_n)-\frac{1}{p}I^\prime(u_n)u_n=\left( \frac{1}{p}-\frac{1}{q} \right)\Vert u_n^+ \Vert_{L^{q}(\mathbb{R}^{N-1})}^{q},
		\]
		which gives the desired estimate.
	\end{proof}
	
	\begin{lemma}\label{17}
		Let $(u_n)\subset E$ be a (PS) sequence. Then, there exists $C=C(N,p)>0$ such that 
		\begin{equation}\label{Lions}
			\sup_{y\in\mathbb{R}^{N-1}}\int_{\Gamma_1(y)}(u_{n}^+)^p\,\mathrm{d}x'\geq C.
		\end{equation}
	\end{lemma}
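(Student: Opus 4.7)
\textbf{Proof plan for Lemma~\ref{17}.}

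The natural approach is argument by contradiction: suppose that along a subsequence (still denoted $(u_n)$) we have
\[
\delta_n := \sup_{y\in\mathbb{R}^{N-1}}\int_{\Gamma_1(y)}(u_n^+)^p\,\mathrm{d}x'\longrightarrow 0,
\]
and derive $\|u_n^+\|_{L^q(\mathbb{R}^{N-1})}^q \to 0$, contradicting the positive lower bound given by Lemma~\ref{16}.

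The plan is to combine the localized Sobolev trace inequality of Lemma~\ref{05} with a Lyapunov interpolation between $L^p$ and $L^{p_*}$ on each small ball. Concretely, for each $y\in\mathbb{R}^{N-1}$ I would first split
\[
\int_{\Gamma_1(y)}(u_n^+)^q = \|u_n^+\|_{L^q(\Gamma_1(y))}^{p}\cdot\|u_n^+\|_{L^q(\Gamma_1(y))}^{q-p}
\]
and apply Lemma~\ref{05} with exponent $q$ to bound the first factor by $C\bigl(\|\nabla u_n\|_{L^p(B_1^+(y))}^p+\|u_n^+\|_{L^p(\Gamma_1(y))}^p\bigr)$. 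For the second factor I would invoke the interpolation inequality $\|v\|_{L^q(\Gamma_1(y))}\le\|v\|_{L^p(\Gamma_1(y))}^{\alpha}\|v\|_{L^{p_*}(\Gamma_1(y))}^{1-\alpha}$ (valid for $p<q<p_*$ with $\alpha\in(0,1)$ determined by Lyapunov), estimate $\|u_n^+\|_{L^{p_*}(\Gamma_1(y))}$ by a constant using Lemma~\ref{05} with exponent $p_*$ together with the $E$-boundedness from Lemma~\ref{16}, and bound the $L^p(\Gamma_1(y))$-norm by $\delta_n^{1/p}$. This should yield a local estimate of the form
\[
\int_{\Gamma_1(y)}(u_n^+)^q\,\mathrm{d}x'\le C\,\delta_n^{\alpha(q-p)/p}\bigl(\|\nabla u_n\|_{L^p(B_1^+(y))}^p+\|u_n^+\|_{L^p(\Gamma_1(y))}^p\bigr),
\]
with a constant $C$ independent of $y$ and $n$.

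The second step is to pass from local to global. I would cover $\mathbb{R}^{N-1}$ by a countable family of balls $\{\Gamma_1(y_i)\}$ with centers on a lattice in $\mathbb{R}^{N-1}$, chosen so that the associated half-balls $\{B_1^+(y_i)\}$ have finite overlap depending only on $N$. Summing the local estimate over $i$ and using this finite-overlap property gives
\[
\|u_n^+\|_{L^q(\mathbb{R}^{N-1})}^q \le C\,\delta_n^{\alpha(q-p)/p}\,\|u_n\|_E^p.
\]
Since $(u_n)$ is bounded in $E$ and $\delta_n\to 0$, the right-hand side tends to $0$, which contradicts the lower bound $\|u_n^+\|_{L^q(\mathbb{R}^{N-1})}^q\ge\tfrac{pq}{q-p}\tfrac{c_q(\mathbb{R}^N_+)}{2}>0$ supplied by Lemma~\ref{16}.

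The main technical point—where care is needed—is the choice of interpolation: a direct Lyapunov interpolation on $\|u_n^+\|_{L^q(\Gamma_1(y))}^q$ followed by Hölder on the sum would merely reproduce the global Lyapunov inequality and would not exploit the smallness of $\delta_n$. The trick is to raise the interpolation to the power $q-p$ only (keeping one copy of $\|u_n^+\|_{L^q(\Gamma_1(y))}^p$ to absorb via Lemma~\ref{05}), so that the factor involving $\delta_n$ appears outside the sum while the remaining factor is exactly the local $E$-energy, which is additive under the finite-overlap covering.
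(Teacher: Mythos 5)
Your argument is correct and rests on the same ingredients as the paper's proof: the localized trace inequality of Lemma~\ref{05}, an $L^p$--$L^{p_*}$ interpolation on each ball $\Gamma_1(y)$, a finite-overlap covering of $\mathbb{R}^{N-1}$, and the $L^q$ lower bound together with $E$-boundedness from Lemma~\ref{16}. The difference lies in how the interpolation is arranged. The paper interpolates the full power $\|u_n^+\|_{L^q(\Gamma_1(y))}^q$ first and then applies Lemma~\ref{05} only to the $L^{p_*}$ factor, so the local energy term appears raised to the power $\theta q/p$; this forces a case distinction according to whether $\theta q\geq p$, with the complementary case $\theta q<p$ reduced to the first via an auxiliary exponent $\bar{q}=p(N-2+p)/(N-1)$ and a second interpolation. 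Your split $\|u_n^+\|_{L^q(\Gamma_1(y))}^q=\|u_n^+\|_{L^q(\Gamma_1(y))}^p\cdot\|u_n^+\|_{L^q(\Gamma_1(y))}^{q-p}$ sends a power of exactly $p$ through Lemma~\ref{05}, so on each ball the local energy enters to the first power with no exponent constraint, while the interpolation and the smallness of $\delta_n$ act only on the remaining power $q-p>0$; this eliminates the case analysis entirely and makes the summation over the covering immediate. The contradiction framing versus the paper's direct derivation is cosmetic. Your version is a genuine, if modest, simplification of the same underlying idea, and the final step (letting $\delta_n\to 0$ force $\|u_n^+\|_{L^q(\mathbb{R}^{N-1})}\to 0$, contradicting Lemma~\ref{16}) is sound.
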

	\begin{proof}
		Let $\theta\in (0,1)$ be fixed  such that $1/q=(1-\theta)/p+\theta/ p_*$. By interpolation and Lemma \ref{05} we have 
		\begin{equation*}
	    \begin{aligned}
		   \Vert u_n^+ \Vert_{L^q(\Gamma_1(y))}^q 
		   & \leq \Vert u_n^+ \Vert_{L^p(\Gamma_1(y))}^{(1-\theta) q}\Vert u_n^+ \Vert_{L^{p_*}(\Gamma_1(y))}^{\theta q} \\
		   & \leq C\Vert u_n^+ \Vert_{L^p(\Gamma_1(y))}^{(1-\theta)q }\left( \Vert\nabla u_n \Vert_{L^p(B_1^+(y))}^{p}+\Vert u_n \Vert_{L^p(\Gamma_1(y))}^{p} \right)^{\theta q/p}.        
		    \end{aligned}
		\end{equation*}
		Now, we split the proof into two cases:\\
		\noindent \textbf{Case 1:} If $p<\overline{q}:=p(N-2+p)/(N-1)\leq q<p_*$. In this case, $\theta q\geq p$ and hence 
		\begin{multline*}
		    \Vert u_n^+ \Vert_{L^q(\Gamma_1(y))}^q \leq \\ C\left(\sup_{y\in \mathbb{R}^{N-1}}\int_{\Gamma_1(y)}(u_n^+)^p\mathrm{d}x'\right)^{
		\frac{(1-\theta)q}{p}}\Vert u_n \Vert^{\theta q-p}_E \left( \Vert\nabla u_n \Vert_{L^p(B_1^+(y))}^{p}+\Vert u_n \Vert_{L^p(\Gamma_1(y))}^{p} \right).
		\end{multline*}
		Covering $\mathbb{R}^{N-1}$ by a family of balls $\left(\Gamma_1(y)\right)_{y\in\mathbb{R}^{N-1}}$ such that each point of $\mathbb{R}^{N-1}$ is contained in at most $N$ balls. Summing up this inequality over this family, we get
		\[
		\begin{alignedat}{2}
		\Vert u_n^+ \Vert_{L^q(\mathbb{R}^{N-1})}^q & \leq C\left(\sup_{y\in \mathbb{R}^{N-1}}\int_{\Gamma_1(y)}(u_n^+)^p\mathrm{d}x'\right)^{\frac{(1-\theta )q}{p}}\Vert u_n \Vert^{\theta q}_E.
		\end{alignedat}
		\]
		This, together with the fact that $(u_n)$ is bounded and Lemma~\ref{16} implies that \eqref{Lions} holds. 
		
		\noindent \textbf{Case 2:} If $p<q<\overline{q}\leq p_*$. In this case, $q=p\tau+\overline{q}(1-\tau)$ for $\tau=\overline{q}(q-p)/q(\overline{q}-p)\in(0,1)$ and using interpolation again we obtain
		\[
		\Vert u_n^+ \Vert_{L^q(\Gamma_1(y))}^q\leq \Vert u_n^+ \Vert_{L^p(\Gamma_1(y))}^{\tau p}\Vert u_n^+ \Vert_{L^{\overline{q}}(\Gamma_1(y))}^{(1-\tau) \overline{q}}.
		\]
		By a simple calculation we have $\tau \overline{q}/p\geq1$, and hence following as in the first case,
		\begin{multline*}
		    \Vert u_n^+ \Vert_{L^q(\mathbb{R}^{N-1})}^q \leq \\ C\left(\sup_{y\in \mathbb{R}^{N-1}}\int_{\Gamma_1(y)}(u_n^+)^p\mathrm{d}x'\right)^{\frac{(1-\tau ) \overline{q}}{p}}\Vert u_n \Vert^{\tau \overline{q}-p}_E \left( \Vert\nabla u_n \Vert_{L^p(B_1^+(y))}^{p}+\Vert u_n \Vert_{L^p(\Gamma_1(y))}^{p} \right)
		\end{multline*}
which again implies that (3.2) holds and the proof is complete.
\end{proof}

Next, we introduce the Nehari manifold associated to \eqref{P1} to obtain a ground state solution for \eqref{P1},
	\[
	\mathcal{N}:=\left\{ u\in E\setminus \left\{ 0 \right\}: I^\prime (u)u=0 \right\}.
	\]
	Since the set of nontrivial critical points of $I$ is contained into $\mathcal{N}$, we define the least energy level as $c_\mathcal{N}:=\inf_{u\in\mathcal{N}}I(u)$. We recall that a nontrivial weak solution $u$ of \eqref{P1} is a ground state if $c_\mathcal{N}=I(u)$.
	
	\begin{proof}[Finalizing the proof of Theorem \ref{T-Existence}] Let $(u_n)\subset E$ be a (PS) sequence for the functional $I$ at the level $c_q(\mathbb{R}_{+}^{N})$. By using Lemma \ref{17}, there exists a sequence $(y_n)\subset \mathbb{R}^{N-1}$ such that
		\[
		\int_{\Gamma(y_n)}|u_n^+|^p\,\mathrm{d}x'\geq \frac{C}{2}>0.
		\]
		Defining $w_n(x)=u_n(x+y_n),$ we have
		\begin{equation}\label{PStransladada}
			\int_{\Gamma(0)}|w_n^+|^p\,\mathrm{d}x'\geq \frac{C}{2},\quad I(w_n)\rightarrow c_q(\mathbb{R}_{+}^{N})\quad \text{and}\quad I^\prime(w_n)\rightarrow 0.
		\end{equation}
		As stated in Lemma \ref{16}, $(w_n)$ is bounded, and up to a subsequence $w_n\rightharpoonup w$  weakly in $E, $
		$w_n\rightarrow w$ in $L^s_{\mathrm{loc}}(\mathbb{R}_{+}^{N})$ for all $1 < p \leq s  <p^*= Np/(N-p)$
		and $w_n\rightarrow w$ in $L^q_{\mathrm{loc}}(\mathbb{R}^{N-1})$ for all $1 < p< q < p_*:=p(N-1)/(N-p)$. Hence, by \eqref{PStransladada}, $w$ is nontrivial. 
		\medskip
		
		We  can see that   $ ( |w_n|^{r-2} w_n )_{n \in \mathbb{N}}$ converges weakly to $|w|^{r-2} w$ in $ L^{\frac{r}{r-1}} (\mathbb{R}^{N-1})$ for any $p \leq r < p_*$, that is, for any $\varphi \in L^{r} (\mathbb{R}^{N-1})$ we have 
		\begin{equation}\label{0001}
		 \int_{\mathbb{R}^{N-1}} \vert w_n \vert^{r-2} w_n \varphi\,\mathrm{d}x'
		 \rightarrow
		 \int_{\mathbb{R}^{N-1}}\vert  w\vert^{r-2} w  \varphi\,\mathrm{d}x'.
		\end{equation}

		\begin{claim} For any $\varphi \in E$ we have 
		\begin{equation}\label{massaranduba}
		 \int_{\mathbb{R}_{+}^{N}} \vert \nabla w_n \vert^{p-2}\nabla w_n\nabla \varphi\,\mathrm{d}x
		 \rightarrow
		 \int_{\mathbb{R}_{+}^{N}}\vert \nabla w\vert^{p-2}\nabla w\nabla \varphi\,\mathrm{d}x   .
		\end{equation}
		\end{claim}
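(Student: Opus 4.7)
The plan is to reduce the claim to almost-everywhere convergence of the gradients $\nabla w_n \to \nabla w$ on $\mathbb{R}_+^N$, from which the desired weak convergence in the dual Lebesgue space follows. First I would fix a radius $R>0$ and a cutoff $\eta \in C_c^\infty(\overline{\mathbb{R}^N})$ with $\eta\equiv 1$ on $B_R^+$, $0\leq\eta\leq 1$, and $\mathrm{supp}\,\eta\subset B_{2R}^+$. Since $(w_n)$ is bounded in $E$, the sequence $\varphi_n := (w_n-w)\eta$ is bounded in $E$, and the $(PS)$ condition gives $\langle I'(w_n),\varphi_n\rangle\to 0$.

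Next I would split $\langle I'(w_n),\varphi_n\rangle$ into the gradient term and the two boundary terms involving $|w_n|^{p-2}w_n$ and $(w_n^+)^{q-1}$. The two boundary terms vanish in the limit: by the local compactness $w_n\to w$ in $L^p_{\mathrm{loc}}(\mathbb{R}^{N-1})$ and $L^q_{\mathrm{loc}}(\mathbb{R}^{N-1})$, together with the boundedness of $(|w_n|^{p-2}w_n)$ and $((w_n^+)^{q-1})$ in the corresponding local dual spaces. Expanding $\nabla\varphi_n = \eta\nabla(w_n-w) + (w_n-w)\nabla\eta$, the cross-term $\int |\nabla w_n|^{p-2}\nabla w_n\cdot\nabla\eta\,(w_n-w)\,\mathrm{d}x$ also tends to zero by Hölder, since $w_n\to w$ strongly in $L^p(B_{2R}^+)$ while $(|\nabla w_n|^{p-2}\nabla w_n)$ is bounded in $L^{p/(p-1)}$. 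Combined with the fact that $\int |\nabla w|^{p-2}\nabla w\cdot\nabla(w_n-w)\,\eta\,\mathrm{d}x\to 0$ from $\nabla w_n\rightharpoonup\nabla w$ in $L^p$, this yields
\[
\int_{\mathbb{R}_+^N}\left(|\nabla w_n|^{p-2}\nabla w_n-|\nabla w|^{p-2}\nabla w\right)\cdot(\nabla w_n-\nabla w)\,\eta\,\mathrm{d}x \longrightarrow 0.
\]
By Simon's strict monotonicity inequalities for the $p$-Laplacian, the integrand is pointwise non-negative, so it converges to $0$ in $L^1(B_R^+)$. Passing to a subsequence and diagonalizing over $R\to\infty$ delivers $\nabla w_n\to\nabla w$ a.e. in $\mathbb{R}_+^N$.

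Finally, since $(|\nabla w_n|^{p-2}\nabla w_n)$ is bounded in the reflexive space $L^{p/(p-1)}(\mathbb{R}_+^N;\mathbb{R}^N)$ and converges a.e. to $|\nabla w|^{p-2}\nabla w$, the uniqueness of weak limits forces $|\nabla w_n|^{p-2}\nabla w_n\rightharpoonup|\nabla w|^{p-2}\nabla w$ in $L^{p/(p-1)}$. Since $\nabla\varphi\in L^p(\mathbb{R}_+^N;\mathbb{R}^N)$ for every $\varphi\in E$, testing against $\nabla\varphi$ yields \eqref{massaranduba}. The main obstacle is the extraction of a.e. convergence of the gradients in the unbounded domain: the monotonicity argument works only locally, so care is needed in choosing the cutoff $\eta$ and verifying that all auxiliary terms (boundary contributions, cross-term in the product rule, and the weak-limit pairings) vanish on $B_{2R}^+$ uniformly enough to enable the diagonalization over $R$.
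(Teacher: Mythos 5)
Your argument is correct and follows essentially the same route as the paper: test the (PS) relation against a localized copy of $w_n-w$, use monotonicity of the $p$-Laplacian together with Simon's inequality to obtain a.e. convergence of $\nabla w_n$ to $\nabla w$ on compact sets, and then identify the weak $L^{p/(p-1)}$-limit of $|\nabla w_n|^{p-2}\nabla w_n$ with $|\nabla w|^{p-2}\nabla w$ before pairing with $\nabla\varphi$. The only difference is technical: the paper additionally truncates the amplitude, testing with $\psi_R T_\eta(w_n-w)$ and working with $\Psi_n^\theta$, $\theta\in(0,1)$, to control the set $\{|w_n-w|>\eta\}$, whereas your simpler choice $(w_n-w)\eta$ suffices here because the subcritical exponent $q<p_*$ and the compact local embeddings make the boundary terms and the $\nabla\eta$ cross term vanish directly.
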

		
		Since the sequence $ (\vert \nabla w_n \vert^{p-2}\nabla w_n )_{n \in \mathbb{N}}$ is bounded in $ (L^{\frac{p}{p-1}} (\mathbb{R}_{+}^{N}) )^N $, up to a subsequence we may assume that there exists a function $ V \in (L^{\frac{p}{p-1}} (\mathbb{R}_{+}^{N})^N $ such that 
		\begin{equation}\label{0005}
		 \vert \nabla w_n \vert^{p-2}\nabla w_n  \rightharpoonup  V  \quad \text{weakly in} \quad  (L^{\frac{p}{p-1}} (\mathbb{R}_{+}^{N}))^N.
		\end{equation}
		From \eqref{PStransladada}, we have $ \forall \varphi\in E,$
		\begin{equation}\label{02}
		    \left|	\int_{\mathbb{R}_{+}^{N}}\vert \nabla w_n \vert^{p-2}\nabla w_n\nabla \varphi\,\mathrm{d}x-\int_{\mathbb{R}^{N-1}}\left[\vert w_n \vert^{q-2}w_n-\lambda \vert w_n \vert^{p-2}w_n\right]\varphi\,\mathrm{d}x'\right| \leq \varepsilon_n \|\varphi\|_E,
		\end{equation}
		where $\varepsilon_n \rightarrow 0$ as $n \rightarrow \infty$.
	
		Let us consider a bump function $\psi_R\in C_0^\infty(\mathbb{R}^N,[0,1])$ 
		such that $\psi_R\equiv 1$ on the compact subset $K_R=\overline{B_R(0)\cap\mathbb{R}^N_+}$ of $\mathbb{R}^N_+$ and $T_\eta(s)=s$ if $|s|\leq\eta$, and $T_\eta(s)=\eta s/|s|$ if $|s|\geq\eta$. Thus, from \eqref{02} we obtain
		\begin{equation}\label{0004}
		\begin{alignedat}{2}
		    \left|	\int_{\mathbb{R}_{+}^{N}}\psi_R \vert \nabla w_n \vert^{p-2}\nabla w_n \right. & \nabla T_\eta(w_n-w)\,\mathrm{d}x+\int_{\mathbb{R}_{+}^{N}}T_\eta(w_n-w)\vert \nabla w_n \vert^{p-2}\nabla w_n\nabla\psi_R\,\mathrm{d}x  \\ &\left. -\int_{\mathbb{R}^{N-1}}\left[\vert w_n \vert^{q-2}w_n-\lambda \vert w_n \vert^{p-2}w_n\right] \psi_RT_\eta(w_n-w)\,\mathrm{d}x' \right|\\
		    &\leq \varepsilon_n \|\psi_RT_\eta(w_n-w)\|_E,
		\end{alignedat}
		\end{equation}
Since
\begin{equation*}
\int_{\mathbb{R}_{+}^{N}} \psi_R \vert \nabla w \vert^{p-2}\nabla w  \nabla T_\eta(w_n-w)\,\mathrm{d}x =\int_{|w_n-w|\leq \eta} \psi_R \vert \nabla w \vert^{p-2}\nabla w  \nabla (w_n-w)\,\mathrm{d}x
\end{equation*}
and	$w_n\rightharpoonup w$  weakly in $E$, we have
	\begin{equation}\label{Camboinha-1}
	   \lim_{n \rightarrow \infty } \int_{\mathbb{R}_{+}^{N}} \psi_R \vert \nabla w \vert^{p-2}\nabla w  \nabla T_\eta(w_n-w)\,\mathrm{d}x =0.
	\end{equation}
	Since the sequence $|\psi_R T_\eta(w_n-w)|\leq \psi_R |w_n-w|$ is bounded in $ L^{\frac{r}{r-1}} (\mathbb{R}^{N-1})$ for any $p \leq r < p_*$, from \eqref{0001} we have 
		\begin{equation}\label{0003}
		\lim_{n \rightarrow \infty}    \int_{\mathbb{R}^{N-1}}\left[\vert w_n \vert^{q-2}w_n-\lambda \vert w_n \vert^{p-2}w_n\right] \psi_RT_\eta(w_n-w)\,\mathrm{d}x' =0.
		\end{equation}
	Using H\"older inequality,
	\begin{multline*}
	    \left|\int_{\mathbb{R}_{+}^{N}}T_\eta(w_n-w)\vert \nabla w_n \vert^{p-2}\nabla w_n\nabla\psi_R\,\mathrm{d}x\right|\leq\\ \left(\int_{K_R}|T_\eta(w_n-w)|^{p}\right)^{1/p}\left(\int_{K_R}\vert \nabla w_n \vert^{p}\right)^{(p-1)/p}
\leq\\ C\left(\int_{K_R}|w_n-w|^{p}\right)^{1/p}.
	\end{multline*}
Thus, 
\begin{equation}\label{0002}
    \lim_{n\rightarrow +\infty}\int_{\mathbb{R}_{+}^{N}}T_\eta(w_n-w)\vert \nabla w_n \vert^{p-2}\nabla w_n\nabla\psi_R\,\mathrm{d}x=0.
\end{equation}
Since $\psi_RT_\eta(w_n-w)$ is a bounded sequence in $E$, from \eqref{0004}, \eqref{0003} and \eqref{0002} we have
	\begin{equation}\label{Formosa}
	   \lim_{n \rightarrow \infty } \int_{\mathbb{R}_{+}^{N}} \psi_R \vert \nabla w_n \vert^{p-2}\nabla w_n  \nabla T_\eta(w_n-w)\,\mathrm{d}x =0.
	\end{equation}
	From \eqref{Camboinha-1} and \eqref{Formosa} we obtain 
	\begin{equation}\label{Manaira}
	   \lim_{n \rightarrow \infty } \int_{\mathbb{R}_{+}^{N}} 
	   \psi_R \left[\vert \nabla w_n \vert^{p-2}\nabla w_n - \vert \nabla w \vert^{p-2}\nabla w \right]\nabla T_\eta(w_n-w)\,\mathrm{d}x =0.
	\end{equation}
\begin{claim}
 $\nabla w_n\rightarrow \nabla w\quad \text{almost everywhere in }\mathbb{R}_+^N$
\end{claim}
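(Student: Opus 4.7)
The plan is to exploit the monotonicity inequality for the vector field $\xi \mapsto |\xi|^{p-2}\xi$ together with the limit established in \eqref{Manaira} to upgrade weak convergence of $\nabla w_n$ to a.e.\ pointwise convergence. The key observation is that on the set $\{|w_n-w|\le\eta\}$ the truncation satisfies $\nabla T_\eta(w_n-w)=\nabla w_n-\nabla w$, so the integrand in \eqref{Manaira} is the \emph{nonnegative} quantity
$$\psi_R\,\bigl\langle |\nabla w_n|^{p-2}\nabla w_n-|\nabla w|^{p-2}\nabla w,\ \nabla w_n-\nabla w\bigr\rangle\,\mathbf{1}_{\{|w_n-w|\le\eta\}}.$$
Since $\psi_R\equiv 1$ on $K_R$, \eqref{Manaira} yields, writing $E_{n,\eta}:=K_R\cap\{|w_n-w|\le\eta\}$,
$$\lim_{n\to\infty}\int_{E_{n,\eta}}\bigl\langle |\nabla w_n|^{p-2}\nabla w_n-|\nabla w|^{p-2}\nabla w,\ \nabla w_n-\nabla w\bigr\rangle\,\mathrm{d}x=0.$$

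Next I would invoke the classical vector inequalities: for $p\ge 2$,
$$\bigl\langle |a|^{p-2}a-|b|^{p-2}b,\ a-b\bigr\rangle \ge c_p\,|a-b|^p,$$
and for $1<p<2$,
$$\bigl\langle |a|^{p-2}a-|b|^{p-2}b,\ a-b\bigr\rangle \ge c_p\,\frac{|a-b|^2}{(|a|+|b|)^{2-p}}.$$
In the case $p\ge 2$ this immediately gives $\nabla w_n\to\nabla w$ in $L^p(E_{n,\eta})$. For $1<p<2$ I would combine the second inequality with a Hölder trick (with exponents $2/p$ and $2/(2-p)$), using the boundedness of $(w_n)$ in $E$ to control $\|(|\nabla w_n|+|\nabla w|)^p\|_{L^1}$, to conclude again that $\int_{E_{n,\eta}}|\nabla w_n-\nabla w|^p\,\mathrm{d}x\to 0$.

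Finally, to pass from $L^p$ convergence on $E_{n,\eta}$ to a.e.\ convergence on all of $K_R$, I would first extract a subsequence so that $w_n\to w$ pointwise a.e.\ on $\mathbb{R}^N_+$ (possible since $w_n\to w$ in $L^p_{\mathrm{loc}}(\mathbb{R}^N_+)$). Then for a.e.\ $x\in K_R$ there is $n_0=n_0(x)$ with $|w_n(x)-w(x)|\le \eta$ for all $n\ge n_0$, so a.e.\ point of $K_R$ eventually lies in $E_{n,\eta}$. Combined with the $L^p$ convergence (passing to a further subsequence), this yields $\nabla w_n\to\nabla w$ a.e.\ on $K_R$. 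A standard diagonal extraction over $R\to\infty$ promotes this to a.e.\ convergence on all of $\mathbb{R}^N_+$.

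The main technical obstacle is the singular case $1<p<2$, where the monotonicity inequality has a denominator $(|\nabla w_n|+|\nabla w|)^{2-p}$ that must be controlled via the Hölder trick above; one must be careful that this denominator can vanish, which is handled by the usual convention that the integrand is zero where $\nabla w_n=\nabla w=0$. A secondary bookkeeping point is that \eqref{Manaira} involves the cutoff $T_\eta$, so the convergence is only extracted on the sets $\{|w_n-w|\le\eta\}$; this is absorbed by the a.e.\ convergence of $w_n$ to $w$, which ensures these sets exhaust $K_R$ in the pointwise limit.
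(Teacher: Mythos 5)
Your argument is correct, and it rests on the same pillars as the paper's: the nonnegativity of $\Psi_n:=\bigl[|\nabla w_n|^{p-2}\nabla w_n-|\nabla w|^{p-2}\nabla w\bigr]\cdot\nabla(w_n-w)$, the limit \eqref{Manaira}, the identity $\nabla T_\eta(w_n-w)=\nabla(w_n-w)$ on $\{|w_n-w|\le\eta\}$, and Simon's monotonicity inequality. Where you diverge is in the final extraction of a.e.\ convergence. The paper fixes $\theta\in(0,1)$, splits $K_R$ into $\{|w_n-w|\le\eta\}$ and $\{|w_n-w|>\eta\}$, and shows $\int_{K_R}\Psi_n^\theta\,\mathrm{d}x\to 0$ by applying H\"older on each piece: the first integral is $o_n(1)$ by \eqref{Manaira}, and the second is controlled by the uniform $L^1(K_R)$ bound on $\Psi_n$ together with $|K_R\cap\{|w_n-w|>\eta\}|\to 0$. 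This gives $\Psi_n\to 0$ a.e.\ in $K_R$ (subsequence) in one stroke, and Simon's inequality then yields the claim. You instead stay on $E_{n,\eta}=K_R\cap\{|w_n-w|\le\eta\}$, upgrade $\int_{E_{n,\eta}}\Psi_n\to 0$ to $\int_{E_{n,\eta}}|\nabla w_n-\nabla w|^p\to 0$ (directly for $p\ge 2$, via the H\"older trick with exponents $2/p$ and $2/(2-p)$ for $1<p<2$), and then absorb the excluded set by passing to a subsequence on which $w_n\to w$ a.e., so that a.e.\ $x\in K_R$ eventually lies in $E_{n,\eta}$; a diagonal over $R$ finishes. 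The two routes are of comparable length; the paper's $\theta$-power device elegantly treats both regions in a single $L^1$ estimate and requires only convergence in measure of $w_n$, while your version is slightly more explicit about where the truncation enters and about the role of the $1<p<2$ denominator. One small bookkeeping remark: you must take the a.e.\ convergence of $w_n$ and the a.e.\ convergence of $\mathbf{1}_{E_{n,\eta}}|\nabla w_n-\nabla w|$ along the \emph{same} (nested) subsequence before concluding, but that is routine.
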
	

Defining the nonnegative function  $\Psi_n(x):=\left[|\nabla w_n|^{p-2}\nabla w_n-|\nabla w|^{p-2}\nabla w\right]\nabla ( w_n-w)$ and fixing $\theta\in(0,1)$, using \eqref{Manaira} and H\"older's inequality we see that  
		\begin{align*}
			\int_{K_R}\Psi_n^\theta \, \mathrm{d}  x&=\int_{K_R\cap\{|w_n-w|\leq\eta\}}\Psi_n^\theta \, \mathrm{d}  x+\int_{K_R\cap\{|w_n-w|\geq\eta\}}\Psi_n^\theta \, \mathrm{d}  x\\
			&\leq o_n(1)|K_R|^{1-\theta}+\left(\int_{K_R\cap\{|w_n-w|>\eta\}}\Psi_n \, \mathrm{d}  x\right)^\theta |{K_R\cap\{|w_n-w|>\eta\}}|^{1-\theta}.
		\end{align*}
		Since $\eta>0$ is fixed, by the assumptions, $|{K_R\cap\{|w_n-w|\geq\eta\}}|\rightarrow 0$ as $n\rightarrow+\infty$. As a consequence,  $\Psi_n^\theta$ converges strongly to zero in $L^1(K_R)$. From \cite[Lemma 2.2]{Simon} there exists $C_0>0$ such that 
		$$
		\Psi_n(x)\geq C_0\left\{
		\begin{aligned}
		&|\nabla w_n(x)-\nabla w(x)|^p\quad&\mbox{if}&\ \ p\geq2,\\
		&\frac{|\nabla w_n(x)-\nabla w(x)|^2}{(1+|\nabla w_n(x)|+|\nabla w(x)|)^{2-p}}\quad&\mbox{if}&\ \ 1<p<2.
		\end{aligned}
		\right.
		$$
		Therefore, the claim holds. From \eqref{0005}
		and $\nabla w_n\rightarrow \nabla w\quad \text{almost everywhere in }\mathbb{R}_+^N$, we can conclude that $V=\vert \nabla w \vert^{p-2}\nabla w$ almost everywhere in $\mathbb{R}_+^N$ and \eqref{massaranduba} holds..
		\begin{claim}
		 $w\in\mathcal{N}$ 
		\end{claim}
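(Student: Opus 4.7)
The plan is to show that $w$ is a weak solution of \eqref{P1}; combined with the nontriviality of $w$ already established in \eqref{PStransladada}, this immediately gives $w\in\mathcal{N}$ by taking $w$ itself as a test function.

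First, I would pass to the limit in the identity
\[
I'(w_n)\varphi = \int_{\mathbb{R}^N_+}|\nabla w_n|^{p-2}\nabla w_n\nabla\varphi\,\mathrm{d}x+\lambda\int_{\mathbb{R}^{N-1}}|w_n|^{p-2}w_n\varphi\,\mathrm{d}x'-\int_{\mathbb{R}^{N-1}}(w_n^+)^{q-1}\varphi\,\mathrm{d}x'
\]
for test functions $\varphi\in C_0^\infty(\mathbb{R}^N)$ restricted to $\mathbb{R}^N_+$. The gradient term converges to $\int_{\mathbb{R}^N_+}|\nabla w|^{p-2}\nabla w\nabla\varphi\,\mathrm{d}x$ directly by the claim \eqref{massaranduba}. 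For the boundary integrals, since $\mathrm{supp}\,\varphi\cap\mathbb{R}^{N-1}$ is compact and $w_n\to w$ strongly in $L^s_{\mathrm{loc}}(\mathbb{R}^{N-1})$ for every $p\leq s<p_*$, the convergences
\[
\int_{\mathbb{R}^{N-1}}|w_n|^{p-2}w_n\varphi\,\mathrm{d}x'\to\int_{\mathbb{R}^{N-1}}|w|^{p-2}w\varphi\,\mathrm{d}x',\quad\int_{\mathbb{R}^{N-1}}(w_n^+)^{q-1}\varphi\,\mathrm{d}x'\to\int_{\mathbb{R}^{N-1}}(w^+)^{q-1}\varphi\,\mathrm{d}x'
\]
follow from the weak convergence recorded in \eqref{0001} (or, equivalently, from a Vitali/dominated convergence argument using the a.e.\ convergence on the boundary together with the boundedness of $(|w_n|^{p-2}w_n)$ and $((w_n^+)^{q-1})$ in the appropriate dual Lebesgue spaces). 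Since $I'(w_n)\varphi\to 0$, this yields $I'(w)\varphi=0$ for every $\varphi\in C_0^\infty(\mathbb{R}^N)$.

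Second, I would extend this identity to all $\varphi\in E$. By Lemma \ref{0006}, the restrictions of $C_0^\infty(\mathbb{R}^N)$ to $\mathbb{R}^N_+$ are dense in $E$, and $I'(w)$ is continuous on $E$, so $I'(w)\varphi=0$ for every $\varphi\in E$. Taking $\varphi=w$ gives $I'(w)w=0$, and since $w\neq0$, we conclude $w\in\mathcal{N}$.

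The main technical difficulty is the passage to the limit in the boundary term $\int(w_n^+)^{q-1}\varphi\,\mathrm{d}x'$: compactness of the trace embedding fails globally on $\mathbb{R}^{N-1}$, so the argument genuinely relies on the compact support of the test function and the local strong trace convergence. Once the identity is established for compactly supported test functions, the density argument is routine.
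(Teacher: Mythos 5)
Your proposal is correct and follows essentially the same route as the paper: pass to the limit in $I'(w_n)\varphi$ using \eqref{massaranduba} for the gradient term and \eqref{0001} (or local strong trace convergence) for the boundary terms, then extend from $C_0^\infty(\mathbb{R}^N)$ to all of $E$ by Lemma \ref{0006}, and finally take $\varphi=w$ together with the nontriviality of $w$. One small remark: \eqref{0001} already gives the boundary-term convergence globally against any $\varphi\in L^r(\mathbb{R}^{N-1})$, so the compact support of the test function is not strictly needed at that step, though it is of course convenient and consistent with the density reduction.
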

				From \eqref{02} and \eqref{massaranduba}, since $w_n\rightarrow w$ in $L^r_{\mathrm{loc}}(\mathbb{R}^{N-1})$ for any $1\leq p\leq r <p^*$ and using Lemma \ref{0006}, we obtain 
		$	I^\prime(w)\varphi=0, \; \forall \varphi\in E,
		$
        which, together with the fact that $w$ is nontrivial, proves the claim. 
        
        Since
		\[
		I(w_n)-\frac{1}{p}I^\prime(w_n)w_n=\left( \frac{1}{p}-\frac{1}{q} \right)\Vert w_n^+ \Vert_{L^q(\mathbb{R}^{N-1})}^q\rightarrow c_q(\mathbb{R}_{+}^{N}),
		\]
		and using the weak lower semi-continuity of the norm,  we get
		$
		I(w)\leq c_q(\mathbb{R}_{+}^{N}).
		$

        To complete the proof, we use \cite[Theorem 1.8]{Vv2003}, which proves that the mountain-pass level gives the least energy level.
        Therefore, $c_q(\mathbb{R}_{+}^{N})\leq I(w)$ and hence $w$ is a ground state solution of \eqref{P1}.
\end{proof}

	\section{Regularity and decay}\label{reg}
	This section is devoted to proving Theorem~\ref{T-Regularity-Decay}. First, we use the Moser iteration method to obtain a $L^\infty$ estimate.

	\begin{lemma}[Boundedness]\label{13}
		If $u\in E$ is a weak solution of Problem \eqref{P1} then, in the trace sense $u\vert_{\mathbb{R}^{N-1}}\in L^\infty(\mathbb{R}^{N-1})$ and  $u\in L^\infty(\mathbb{R}_{+}^{N})$. 
	\end{lemma}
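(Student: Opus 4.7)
The plan is to run a Moser iteration on the boundary trace, exploiting the strict subcriticality $q<p_*$ together with the Sobolev trace embedding $E\hookrightarrow L^{p_*}(\mathbb{R}^{N-1})$. Without loss of generality assume $u\ge 0$ (otherwise apply the same argument separately to $u^+$ and $u^-$). For $\beta\ge 1$ and $L>0$, set $u_L:=\min(u,L)$ and use the truncated test function $\varphi=u\,u_L^{p(\beta-1)}$. Since $u_L\le L$, the function $\varphi$ lies in $E$, and by the density Lemma~\ref{0006} the weak identity \eqref{Def1} extends to all such $\varphi$. Computing $\nabla\varphi$ separately on $\{u\le L\}$ and $\{u>L\}$ and comparing with $\nabla(u\,u_L^{\beta-1})$ yields the pointwise comparison $|\nabla(u\,u_L^{\beta-1})|^p\le C\beta^{p-1}|\nabla u|^{p-2}\nabla u\cdot\nabla\varphi$. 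Plugging into \eqref{Def1} and applying the trace embedding gives a Caccioppoli-type bound
\[
\|u\,u_L^{\beta-1}\|_{L^{p_*}(\mathbb{R}^{N-1})}^{p}\le C\beta^{p-1}\int_{\mathbb{R}^{N-1}}u^{q-p}\bigl(u\,u_L^{\beta-1}\bigr)^{p}\,\mathrm{d}x',
\]
after either dropping or moving the $\lambda$-boundary term according to the sign of $\lambda$.

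The critical-looking right-hand side is handled by splitting $\mathbb{R}^{N-1}=\{u\le K\}\cup\{u>K\}$ for a threshold $K>0$ to be chosen. On the good set, $u^{q-p}\le K^{q-p}$, and one retains the factor $K^{q-p}\|u\,u_L^{\beta-1}\|_{L^p(\mathbb{R}^{N-1})}^p$. On the bad set, H\"older's inequality with exponent pair $\bigl(p_*/(q-p),\,p_*/(p_*-q+p)\bigr)$ yields
\[
\int_{\{u>K\}}u^{q-p}\bigl(u\,u_L^{\beta-1}\bigr)^{p}\,\mathrm{d}x'\le\Bigl(\int_{\{u>K\}}u^{p_*}\,\mathrm{d}x'\Bigr)^{\!(q-p)/p_*}\bigl\|u\,u_L^{\beta-1}\bigr\|_{L^{pr'}(\mathbb{R}^{N-1})}^{p},
\]
where $r'=p_*/(p_*-q+p)$, so that $p\le pr'<p_*$ precisely because $q<p_*$. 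Since $u\in L^{p_*}(\mathbb{R}^{N-1})$ by the trace embedding, the prefactor can be made as small as we wish by taking $K$ large; interpolating the $L^{pr'}$ norm between $L^p$ and $L^{p_*}$ and applying Young's inequality lets one absorb the critical piece into the left-hand side.

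Once the absorption is performed, letting $L\to\infty$ by monotone convergence yields the Moser-step inequality $\|u\|_{L^{p_*\beta}(\mathbb{R}^{N-1})}\le C(\beta)^{1/(p\beta)}\|u\|_{L^{p\beta}(\mathbb{R}^{N-1})}$, valid for every $\beta\ge 1$ for which $u\in L^{p\beta}(\mathbb{R}^{N-1})$. Starting from $\beta_0=1$ (recall $u\in L^{p}(\mathbb{R}^{N-1})$ by the definition of $E$) and iterating along $\beta_{k+1}=(p_*/p)\beta_k$, a standard geometric-series bookkeeping shows that the product $\prod_k C(\beta_k)^{1/(p\beta_k)}$ converges, whence $u\vert_{\mathbb{R}^{N-1}}\in L^{\infty}(\mathbb{R}^{N-1})$. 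For the interior bound, now that the Neumann datum $|u|^{q-2}u-\lambda|u|^{p-2}u$ is bounded on $\mathbb{R}^{N-1}$ and $u$ is $p$-harmonic in $\mathbb{R}^{N}_+$ with $u\in L^{p^*}(\mathbb{R}^{N}_+)$, classical Serrin/Trudinger-type local boundedness estimates for degenerate quasilinear equations up to the flat boundary (combined with a covering of $\mathbb{R}^{N}_+$) deliver $u\in L^{\infty}(\mathbb{R}^{N}_+)$.

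The delicate point is keeping the iteration constants under control in the third paragraph: the threshold $K_k$ needed at the $k$-th step depends both on $\beta_k$ and on $\|u\|_{L^{p_*\beta_{k-1}}}$, so careful bookkeeping is required to ensure that $\sum_k (\log C(\beta_k))/\beta_k$ still converges. The strict subcriticality $q<p_*$ is decisive, as it forces $pr'<p_*$ and provides the extra room in the interpolation needed to dominate the $\beta^{p-1}$ factor coming from the test-function manipulation.
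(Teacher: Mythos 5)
Your plan and the paper's proof share the same skeleton---Moser iteration on the boundary trace with the truncated test function $\varphi=u\,u_L^{p(\beta-1)}$, a Sobolev-trace step, and boundary-sup-then-interior-sup---but they close the Moser step differently. You split $\mathbb{R}^{N-1}$ into $\{u\le K\}\cup\{u>K\}$ and use the smallness of the $L^{p_*}$ tail together with interpolation and Young to absorb the critical piece. The paper avoids both the split and the absorption: it bounds $u_L\le|u|$ on the right-hand side and applies H\"older with exponents $p_*/(q-p)$ and $p_*/(p_*-q+p)$ to obtain
\[
\Big[\int_{\mathbb{R}^{N-1}}\big(u\,u_L^{\beta-1}\big)^{p_*}\,\mathrm{d}x'\Big]^{p/p_*}\le\beta^pC\,\|u\|_{L^{p_*}(\mathbb{R}^{N-1})}^{q-p}\,\|u\|_{L^{\alpha\beta}(\mathbb{R}^{N-1})}^{p\beta},\qquad\alpha:=\frac{pp_*}{p_*-q+p}<p_*,
\]
then picks the starting exponent $\beta_0=(p_*-q+p)/p>1$ so that $\alpha\beta_0=p_*$; the recursion $\alpha\beta_{m+1}=p_*\beta_m$ then runs with geometric ratio $p_*/\alpha>1$, the constant at step $m$ is an explicit power of $\beta_m$, and the bookkeeping is trivial. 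This also resolves the worry you raise at the end: even in your version, the threshold $K$ can be fixed once from $\|u\|_{L^{p_*}(\mathbb{R}^{N-1})}$ (it need not track $\beta_k$ or previous norms), so $C(\beta_k)$ is merely polynomial in $\beta_k$ and $\sum_k(\log C(\beta_k))/\beta_k$ converges automatically. For the interior bound, the paper uses a shorter device than the Serrin/Trudinger theory you invoke: testing with $\psi=(u-k)_+$ for $k>\|u\|_{L^\infty(\mathbb{R}^{N-1})}$ makes $\psi$ vanish on $\mathbb{R}^{N-1}$, so the boundary terms drop and one gets $\int_{\{u>k\}}|\nabla u|^p=0$, hence $|\{u>k\}|=0$ using $u\in L^{p^*}(\mathbb{R}_+^N)$. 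Your appeal to up-to-the-boundary local $L^\infty$ estimates plus a covering is correct but heavier; the paper's Stampacchia-type truncation is self-contained and avoids citing additional regularity theory.
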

	\begin{proof}
	For each $n\in\mathbb{N}$ and $\beta>1$ we define $\phi_n=uu_{n}^{p(\beta-1)}$, where $u_n=\min\left\{ |u|,n \right\}$. Taking $\phi_n$ as a test function in \eqref{Def1} one has
		\[
		\int_{\mathbb{R}_{+}^{N}}|u_n|^{p(\beta-1)}|\nabla u|^p\,\mathrm{d}x\leq \int_{\mathbb{R}^{N-1}}|u|^{q}|u_{n}|^{p(\beta-1)}\,\mathrm{d}x'.
		\]
		By using the trace embedding and H\"older's inequality with exponents $p_*/(q-p)$ and $p_*/(p_*-q+p)$ we have
		\begin{multline*}
		    \left[\int_{\mathbb{R}^{N-1}}\left( |u u_{n}|^{(\beta-1)}\right)^{p_*}\,\mathrm{d}x'\right]^\frac{p}{p_*}\leq \\
      C_{N,p}\int_{\mathbb{R}_N^+}\left\vert \nabla (uu_n^{(\beta-1)})\right\vert^p \mathrm{d}x \leq \\
      \beta^pC_{N,p}\int_{\mathbb{R}_{+}^{N}}|u_n|^{p(\beta-1)}|\nabla u|^p  \,\mathrm{d}x \leq \\
      \beta^pC_{N,p} \int_{\mathbb{R}^{N-1}}|u|^{q}|u_{n}|^{p(\beta-1)}\,\mathrm{d}x' \leq \\
      \beta^pC_{N,p}\left[\int_{\mathbb{R}^{N-1}} |u|^{p_*} \,\mathrm{d}x'\right]^{\frac{q-p}{p_*}}\left[ \int_{\mathbb{R}^{N-1}} |u|^{\frac{pp_*\beta}{p_*-q+p}} \,\mathrm{d}x' \right]^\frac{p_*-q+p}{p_*}.
		\end{multline*}
	By choosing $\beta=(p_*-q+p)/p$ and $\alpha=pp_*/(p_*-q+p)$ follows
	\[
	\begin{alignedat}{2}
	\left[\int_{\mathbb{R}^{N-1}}\left( |u u_{n}|^{(\beta-1)}\right)^{p_*}\,\mathrm{d}x'\right]^\frac{p}{p_*}&\leq \beta^pC_{N,p}\left[\int_{\mathbb{R}^{N-1}} |u|^{p_*} \,\mathrm{d}x'\right]^{\frac{q-p}{p_*}}\left[ \int_{\mathbb{R}^{N-1}} |u|^{\alpha\beta} \,\mathrm{d}x' \right]^{\frac{p\beta}{\alpha\beta}}\\
	&\leq \beta^pC_{N,p}\Vert u \Vert^{q-p}_{L^{p_*}(\mathbb{R}^{N-1})}\Vert u \Vert^{p\beta}_{L^{\alpha\beta}(\mathbb{R}^{N-1})}.
	\end{alignedat}
	\]
	Fatou's Lemma implies
	\begin{equation}\label{14}
	\Vert u \Vert_{L^{p_*\beta}(\mathbb{R}^{N-1})}\leq \left( \beta^pC_{N,p}\Vert u \Vert^{q-p}_{L^{p_*}(\mathbb{R}^{N-1})}\right)^{1/(p\beta)}	\Vert u \Vert_{L^{\alpha\beta}(\mathbb{R}^{N-1})}.
	\end{equation}
	Taking $\beta_0=\beta$ and $\beta_{1}\alpha=p_*\beta_0$ in \eqref{14} we obtain
	\[
	\begin{alignedat}{2}
	\Vert u \Vert_{L^{p_*\beta_1}(\mathbb{R}^{N-1})} &\leq \left( \beta_1^pC_{N,p}\Vert u \Vert^{q-p}_{L^{p_*}(\mathbb{R}^{N-1})}\right)^{1/(p\beta_1)}	\Vert u \Vert_{L^{\alpha\beta_1}(\mathbb{R}^{N-1})}\\
	&=\left( \beta_1^pC_{N,p}\Vert u \Vert^{q-p}_{L^{p_*}(\mathbb{R}^{N-1})}\right)^{1/(p\beta_1)}	\Vert u \Vert_{L^{p_*\beta_0}(\mathbb{R}^{N-1})}\\
	&\leq \left( C_{N,p}\Vert u \Vert^{q-p}_{L^{p_*}(\mathbb{R}^{N-1})}\right)^{1/(p\beta_1)+1/(p\beta_0)} \beta_1^{1/\beta_1}\beta_0^{1/\beta_0} \Vert u \Vert_{L^{p_*}(\mathbb{R}^{N-1})}
	\end{alignedat}
	\]
	Defining $\beta_m=(p_*/\alpha)^m\beta_0,$ by a similar argument, we can prove that
	\begin{multline}\label{Camboinha}
	    	\Vert u \Vert_{L^{p_*\beta_m}(\mathbb{R}^{N-1})}\leq \\ \left( C_{N,p}\Vert u \Vert^{q-p}_{L^{p_*}(\mathbb{R}^{N-1})}\right)^{\frac{1}{p\beta_0}\sum_{i=0}^{m}\left(\frac{p_*}{\alpha}\right)^{-i}} \beta_0^{\frac{1}{\beta_0}\sum_{i=0}^{m}\left(\frac{p_*}{\alpha}\right)^{-i}} \left(\frac{p_*}{\alpha}\right)^{\frac{1}{\beta_0}\sum_{i=0}^{m}\left(\frac{p_*}{\alpha}\right)^{-i}} \Vert u \Vert_{L^{p_*}(\mathbb{R}^{N-1})}.
	\end{multline}
	Since $p_*/\alpha>1$, we have  $1/\beta_0\sum_{i=0}^{m}\left(p_*/\alpha\right)^{-i}\rightarrow p/(p_*-q)$ as $m\rightarrow +\infty$. Hence, taking $m\rightarrow +\infty$ in \eqref{Camboinha}, it follows that
	\[
	\Vert u \Vert_{L^{\infty}(\mathbb{R}^{N-1})}\leq \left(C_{N,p}\Vert u \Vert^{q-p}_{L^{p_*}(\mathbb{R}^{N-1})}\right)^{1/(p_*-q)} \beta_0^{p/(p_*-q)} \left(\frac{p_*}{\alpha}\right)^{p/(p_*-q)} \Vert u \Vert_{L^{p_*}(\mathbb{R}^{N-1})}.
	\]
    Thus, $u\in L^{\infty}(\mathbb{R}^{N-1})$ and let  $M>0$ such that $\|u\|_{L^\infty(\mathbb{R}^{N-1})}\leq M$. 
    For each $k\in \mathbb{N}$, let us consider the superlevel set
		\[
		\Omega_k=\left\{ z=(x^\prime,x_N)\in\overline{\mathbb{R}_{+}^{N}}\text{ : } |u(z)|>k \right\}.
		\]
		Observe that $\Omega_{k}$ has finite Lebesgue measure since $u\in L^{p^*}(\mathbb{R}_{+}^{N})$.
		 Thus, if $k>M$ taking 
		\[
		\psi(z)=\left\{
		\begin{alignedat}{2}
		u(z)-k&\quad \text{if}\quad z\in\Omega_k,\\
		0&\quad \text{if}\quad z\in \mathbb{R}_{+}^{N}\setminus\Omega_k,
		\end{alignedat}
		\right.
		\]
		as a test function in \eqref{Def1} infer that 
		\[
		\int_{\Omega_k}\vert \nabla u \vert^p\,\mathrm{d}x=0,
		\]
		which implies that $u$ is constant or $\vert \Omega_k \vert=0$. Therefore,  $u\in L^{\infty}(\mathbb{R}_{+}^{N})$ and this completes the proof. 
	\end{proof}
	
	As a consequence of Lemma \ref{13}, weak solutions of \eqref{P1} belong to class $C^{1,\alpha}_{\mathrm{loc}}(\overline{\mathbb{R}^N_+})$ for some $0<\alpha<1$ (see \cite[Theorem 2]{Liber}). 
	To prove the polynomial decay stated in Theorem~\ref{T-Regularity-Decay}, in our argument, we need the following auxiliary result.
	
	\begin{lemma}[Harnack type inequality]\label{12}
		Let $u$ be a weak solution of Problem \eqref{P1} with $0< u \leq M$ in $B_{3\rho}^{+}$ and $1<\rho<2$. Then, there exist $C=C(N,M)$ and $\theta_0>1$ such that
		\[
		\max_{B_1^+}u+\max_{\Gamma_1}u\leq C\rho^{-(N-1)/\theta_0}\left( \rho^{-1}\Vert u \Vert_{L^{\theta_0}(B_{2\rho}^+)}^{\theta_0}+\Vert u \Vert_{L^{\theta_0}(\Gamma_{2\rho})}^{\theta_0} \right)^{1/\theta_0}.
		\]
		In particular, by Lemma \ref{13} we see that
		\[
		\lim_{|x|\rightarrow +\infty}u(x)=0\quad \text{ for all }\quad x\in\overline{ \mathbb{R}_{+}^{N}}.
		\]
	\end{lemma}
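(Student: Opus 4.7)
The plan is to establish this local boundedness (weak Harnack upper-bound) estimate by a Moser-type iteration tailored to the mixed bulk/boundary structure of \eqref{P1}, linearizing the superlinear boundary term via the a priori hypothesis $u\leq M$.

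First, for $1\leq r_1<r_2\leq 2\rho$ and $\beta\geq 0$, I take a cutoff $\eta\in C_0^\infty(\mathbb{R}^N)$ with $\eta\equiv 1$ on $B_{r_1}^+$, $\mathrm{supp}\,\eta\subset B_{r_2}^+$ and $|\nabla\eta|\leq C/(r_2-r_1)$, and insert $\varphi=\eta^p u^{p\beta+1}$ into \eqref{Def1}. A Young inequality to absorb the $\nabla u\cdot\nabla\eta$ cross term, together with $u^{q-1}\leq M^{q-p}u^{p-1}$ and $|\lambda| u^{p-1}\leq |\lambda| M^{p-1}$ on $\Gamma_{r_2}$, yields
\begin{equation*}
\int_{\mathbb{R}^N_+}\eta^p u^{p\beta}|\nabla u|^p\,\mathrm{d}x
\;\leq\; \frac{C\,\beta^{p-1}}{(r_2-r_1)^p}\int_{B_{r_2}^+} u^{p(\beta+1)}\,\mathrm{d}x
+ C(M,\lambda)\int_{\Gamma_{r_2}} u^{p(\beta+1)}\,\mathrm{d}x'.
\end{equation*}

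Setting $w=u^{\beta+1}$, so that $|\nabla w|^p=(\beta+1)^p u^{p\beta}|\nabla u|^p$, I apply (after the natural rescaling to the unit ball) the trace-type inequality of Lemma~\ref{05} to $\eta w$ and, in parallel, the interior Sobolev embedding $W^{1,p}\hookrightarrow L^{p^*}$ on $B_{r_2}^+$. Combining with the previous display produces the Moser recursion
\begin{equation*}
\|u\|_{L^{p^*(\beta+1)}(B_{r_1}^+)}^{p(\beta+1)}+\|u\|_{L^{p_*(\beta+1)}(\Gamma_{r_1})}^{p(\beta+1)}
\;\leq\; \frac{C(\beta,M)}{(r_2-r_1)^p}\Bigl(\|u\|_{L^{p(\beta+1)}(B_{r_2}^+)}^{p(\beta+1)}+\|u\|_{L^{p(\beta+1)}(\Gamma_{r_2})}^{p(\beta+1)}\Bigr).
\end{equation*}
Since $p_*<p^*$, the recursion is bottlenecked by the boundary exponent, with ratio $\chi:=p_*/p>1$. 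Iterating along $r_m\downarrow 1$ (for instance $r_m=1+(2\rho-1)2^{-m}$) and $p(\beta_m+1)=\theta_0\chi^m$ for some starting $\theta_0>1$, taking logarithms and exploiting the convergence of $\sum\chi^{-m}$ and $\sum m\chi^{-m}$, I arrive at
\begin{equation*}
\max_{\overline{B_1^+}}u+\max_{\Gamma_1}u
\;\leq\; C(N,M)\,\rho^{-(N-1)/\theta_0}\Bigl(\rho^{-1}\|u\|_{L^{\theta_0}(B_{2\rho}^+)}^{\theta_0}+\|u\|_{L^{\theta_0}(\Gamma_{2\rho})}^{\theta_0}\Bigr)^{1/\theta_0},
\end{equation*}
the explicit $\rho$-weights being dictated by matching homogeneity under the scaling $u_\rho(x)=u(\rho x)$ (so that interior contributions scale as $\rho^{-N}$ and boundary contributions as $\rho^{-(N-1)}$).

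For the decay at infinity, Lemma~\ref{13} provides $u\in L^\infty(\overline{\mathbb{R}^N_+})$, while $u\in E$ gives $u\in L^{p^*}(\mathbb{R}^N_+)$ and $u|_{\mathbb{R}^{N-1}}\in L^{p_*}(\mathbb{R}^{N-1})$; choosing any $\theta_0\in(1,\min\{p^*,p_*\})$, the norms $\|u\|_{L^{\theta_0}(B_1^+(x_0))}$ and $\|u\|_{L^{\theta_0}(\Gamma_1(x_0))}$ vanish as $|x_0|\to\infty$ by absolute continuity of the integral, so applying the estimate to the translate $u(\cdot+x_0)$ (with any admissible $\rho$) forces $u(x_0)\to 0$. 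The main obstacle will be the bookkeeping in the iteration step: tracking the simultaneous occurrence of $L^{p^*(\beta+1)}$ in the bulk and $L^{p_*(\beta+1)}$ on the boundary so that the scheme closes at every iterate, verifying the convergence of the product $\prod_m C(\beta_m,M)^{1/\chi^m}(r_m-r_{m+1})^{-p/\chi^m}$, and extracting the exact exponent $(N-1)/\theta_0$ of $\rho$ via the scaling bookkeeping. The hypothesis $u\leq M$ is essential: without it the superlinear boundary term $u^{q-1}$ would produce powers of $u$ growing with $\beta$ and destroy the iteration.
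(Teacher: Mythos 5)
Your proposal is correct and follows essentially the same route as the paper's proof: a Caccioppoli estimate from test functions of the form $\eta^p u^{p\beta+1}$ with the boundary nonlinearity linearized through $u\le M$ (note the $\lambda$-term needs no bound at all, since $u^{p-1}\cdot u^{p\beta+1}=u^{p(\beta+1)}$), followed by the trace--Sobolev inequality of Lemma~\ref{05}, a Moser iteration with ratio $p_*/p$ on the combined bulk/boundary quantities, and a rescaling to produce the $\rho$-weights. The only bookkeeping adjustment is that the starting exponent must satisfy $\theta_0\ge p$ (so $\beta\ge 0$ in the Caccioppoli step), which is compatible with your decay argument since $p<p_*<p^*$.
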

	
	\begin{proof}
		Let $u$ be a weak solution of \eqref{P1} and  $\zeta\in C^1(B_{3\rho})$ satisfying $0\leq \zeta\leq 1$ and $\mathrm{supp}(\zeta)\subset B_{\rho}^{+}$. 
		For $\beta>1$ taking  $\phi=\zeta^pu^\beta$ as test function in \eqref{Def1} we obtain
	\[
	\begin{split}
	      p\int_{\mathbb{R}_{+}^{N}}\zeta^{p-1}u^\beta\vert \nabla u \vert^{p-2}\nabla u\nabla \zeta\,\mathrm{d}x&+\beta\int_{\mathbb{R}_{+}^{N}}\zeta^{p}u^{\beta-1}\vert \nabla u \vert^{p}\,\mathrm{d}x+\int_{\mathbb{R}^{N-1}}\zeta^p u^{p+\beta-1}\,\mathrm{d}x'= \\&\int_{\mathbb{R}^{N-1}}\zeta^p (u^+)^{q+\beta-1}\,\mathrm{d}x',
	\end{split}
	\]
		which implies
		\[
		\beta\int_{B_{\rho}^+}\zeta^pu^{\beta-1}\vert \nabla u \vert^p\,\mathrm{d}x\leq p\int_{B_{\rho}^+}\zeta^{p-1}u^{\beta}\vert \nabla u \vert^{p-1}\vert \nabla \zeta \vert\,\mathrm{d}x + M^{q-p}\int_{\Gamma_\rho}\zeta^pu^{\beta+p-1}\,\mathrm{d}x'.
		\]
		Applying Young's inequality, we conclude that
		\begin{multline*}
		\int_{B_{\rho}^+}\zeta^pu^{\beta-1}\vert \nabla u \vert^p\,\mathrm{d}x\leq \\ \frac{C_{p,q,M}}{\beta-(p-1)\epsilon^\frac{p}{p-1}}\left[ \epsilon^{-p}\int_{B_{\rho}^+}u^{\beta+p-1}\vert \nabla\zeta \vert^p\,\mathrm{d}x +\int_{\Gamma_\rho}\zeta^pu^{\beta+p-1}\,\mathrm{d}x'. \right]    
		\end{multline*}
		Choosing $\beta$ large enough and setting $w=u^s$, where $sp=\beta+p-1$, we have 
		\[
		\frac{1}{s^p}\int_{B_{\rho}^+}\left( \zeta|\nabla w| \right)^p\,\mathrm{d}x\leq C_{p,q,M}\beta^{-1}\left[ \int_{B_{\rho}^+}\left( |\nabla \zeta|w \right)^p\,\mathrm{d}x+\int_{\Gamma_\rho}\left( \zeta w \right)^p\,\mathrm{d}x' \right],
		\]
		which implies
		\begin{multline*}
		\left[ \Vert \zeta\vert\nabla w\vert \Vert_{L^p(B_{\rho}^+)}^p+\Vert \zeta w \Vert_{L^p(\Gamma_\rho)}^p \right]^{1/p}\leq \\C_{p,q,M}\left(1+\beta^{-1}\right)s\left[ \Vert w\vert\nabla\zeta\vert \Vert_{L^p(B_{\rho}^+)}^p + \Vert \zeta w \Vert_{L^p(\Gamma_\rho)}^p\right]^{1/p}.
		\end{multline*}
		Let $r_1,r_2\in\mathbb{R}$ be satisfying  $1\leq r_2<\rho\leq r_1\leq 2$. Taking $\zeta \equiv 1$ in $B_{r_2}$ and $\zeta \equiv 0$ in $\mathbb{R}_{+}^{N}\setminus B_{r_1}$ with $\vert \nabla \zeta \vert\leq 2/(r_1-r_2)$ we achieve
		\begin{equation}\label{06}
			\left[ \Vert\nabla w\Vert_{L^p(B_{r_2}^+)}^p+\Vert  w \Vert_{L^p(\Gamma_{r_2})}^p \right]^{1/p}\leq \frac{sC_{p,q,M,\beta}}{r_1-r_2}\left[ \Vert w \Vert_{L^p(B_{r_1}^+)}^p + \Vert w \Vert_{L^p(\Gamma_{r_1})}^p\right]^{1/p}.
		\end{equation}
		By using Sobolev embedding and Lemma \ref{05} we have
		\begin{equation}\label{07}
			\Vert w \Vert_{L^{p_*}(B_{r_2}^+)}+\Vert w \Vert_{L^{p_*}(\Gamma_{r_2})}\leq C\left(\Vert \nabla w \Vert_{L^p(B_{r_2}^+)}^{p}+ \Vert w \Vert_{L^p(\Gamma_{r_2})}^{p}\right)^{1/p}.
		\end{equation}
		Since $w=u^s$, we can use \eqref{07} and \eqref{06} to obtain
		\begin{equation}\label{08}
			\left( \int_{B_{r_2}^+}\vert u \vert^{sp_*}\,\mathrm{d}x+\int_{\Gamma_{r_2}}u^{sp_*}\,\mathrm{d}x' \right)^{1/p_*}\leq \frac{sC}{r_1-r_2}\left( \int_{B_{r_1}}\vert u \vert^{sp}\,\mathrm{d}x+\int_{\Gamma_{r_1}}\vert u \vert^{sp}\,\mathrm{d}x' \right)^{1/p}.
		\end{equation}
		Let us consider the auxiliary function $\psi: (0,+\infty) \times (0,+\infty)\rightarrow \mathbb{R}$ defined by
		\[
		\psi(t,r)=\left( \int_{B_{r}^+}\vert u \vert^{t}\,\mathrm{d}x+\int_{\Gamma_{r}}\vert u\vert^{t}\,\mathrm{d}x' \right)^{1/t}.
		\]
		Taking the $s$-th root in \eqref{08} it follows
		\begin{equation}\label{09}
			\psi(sp_*,r_2)\leq \left(\frac{sC}{r_1-r_2}\right)^{1/s}\psi(sp,r_1).
		\end{equation}
		Denoting $\theta_0=sp$, $\theta_1=\gamma \theta_0$ and $p\gamma=p_*,$ we can write \eqref{09} as follows  
		\begin{equation}\label{09a}
			\psi(\theta_1,r_2)\leq \left(\frac{sC}{r_1-r_2}\right)^{1/s}\psi(\theta_0,r_1).
		\end{equation}
Defining $\theta_m=\gamma^m\theta_0$ and $ r_m=1+2^{-m},\quad m\in\mathbb{Z}_+$, from \eqref{09a} by induction on $m$ we get
		\begin{equation}\label{10}
			\begin{alignedat}{2}
				\psi(\theta_{m+1},r_{m+1})&\leq \left(\frac{C\gamma^{m+1}\theta_0}{r_m-r_{m+1}}\right)^{p/(\gamma^{m}\theta_0)}\psi(\theta_m,r_m)\\
				&\leq \left( C(2\gamma)^{m+1}\right)^{p/(\gamma^{m}\theta_0)}\psi(\theta_m,r_m)\\
				&=\left(C^\frac{p}{\theta_0}\right)^{1/\gamma^{m}}\left( (2\gamma)^\frac{p}{\theta_0} \right)^{(m+1)/\gamma^{m}}\\
				&\leq \left(C^\frac{p}{\theta_0}\right)^{\sum 1/\gamma^{m}}\left( (2\gamma)^\frac{p}{\theta_0} \right)^{\sum (m+1)/\gamma^{m}}\psi(\theta_0,2).
			\end{alignedat}
		\end{equation}
		Choose $\theta_0$ such that $\theta_m\neq 1$ and observe that $\gamma>1$. We can take the limit as $m\rightarrow +\infty$ in \eqref{10} to obtain
		\[
		\max_{B_1^+}u+\max_{\Gamma_1}u=\psi(+\infty,1)\leq C\psi(\theta_0,2)=\left( \int_{B_{2}^+}\vert u \vert^{\theta_0}\,\mathrm{d}x+\int_{\Gamma_{2}}\vert u\vert^{\theta_0}\,\mathrm{d}x' \right)^{1/\theta_0}.
		\]
		Using the change of variable $x=\overline{x}/\rho$ with $z\in B_{2}^+$ and $x=\overline{x'}/\rho$ with $x'\in \Gamma_2$ we conclude the proof.
	\end{proof}
	
	\begin{proof}[Proof of Theorem~\ref{T-Regularity-Decay}]
		Let $u_\lambda$ be a weak solution of Problem \eqref{P1} and $A>0$ a constant to choose later. Consider $\phi:\overline{ \mathbb{R}_{+}^{N}} \rightarrow \mathbb{R}$ defined by $\phi=(Au_\lambda-v)_+$, where
		\[
		v(x',x_N)=\left( \frac{\mu^{{p}/{2}}}{(\mu+x_N)^2+|x'|^2} \right)^{(N-p)/(2(p-1))}, \quad \mu>0.
		\]
		We can check that $v$ is a positive solution of
		\[
		\left\{
		\begin{alignedat}{2}
		\Delta_pv&=0&\quad&\text{ in }\quad\mathbb{R}_{+}^{N},\\
		\vert \nabla v \vert^{p-2}\frac{\partial v}{\partial \nu}&=\left( \frac{N-p}{p-1} \right)^{p-1}v^{p_*-1}&\quad&\text{ on }\quad\partial\mathbb{R}_{+}^{N}.
		\end{alignedat}
		\right.
		\]
		Using the Harnack inequality proved in Lemma \ref{12}, one can see that $u(x)\rightarrow 0$ as $|x|\rightarrow +\infty$. Thus, we can take $R>0$ large enough such that $u_\lambda^{q-p}(x^\prime,0)<1/2$ if $|x^\prime|\geq R$. Next, choose $A>0$ such that $\phi(x^\prime,0)=0$ if $|x^\prime|\leq R$. Now observe that
		\begin{multline*}
		\int_{|x^\prime|\geq R}\vert \nabla \phi \vert^p\,\mathrm{d}x\leq \\\int_{|x^\prime|\geq R}\left(A^{p-1} |\nabla u|^{p-2}\nabla u-|\nabla v|^{p-2}\nabla v\right)\nabla\phi\,\mathrm{d}x \leq \\\int_{|x^\prime|\geq R}\left( -\left( \frac{N-p}{p-1} \right)^{p-1}v^{p_*-1}+A^{p-1}\left(|u|^{q-1}-\lambda|u|^{p-1}\right) \right)\phi\,\mathrm{d}x'\leq 0.
		\end{multline*}
		Thus $\phi\equiv 0$ in $\overline{\mathbb{R}_{+}^{N}}$, which implies that $u\leq Cv$ in $\overline{\mathbb{R}_{+}^{N}}$, and this yields the desired polynomial decay.
	\end{proof}
	
	\section{Nonexistence}\label{poho}
	In this section, we first prove a Poho\v{z}aev type identity, which is crucial in our argument to prove our nonexistence result state in Theorem~\ref{T0}. Since the seminal paper \cite{Po}, some Poho\v{z}aev type identities have been settled in different contexts, however, always requiring solutions with higher regularity.
	See, for instance, \cite{rossi,PS,Isa} and references therein. 
	
	As it is well known, in general, $C^{1,\alpha}(\overline{\Omega})$ is the optimal regularity for solutions of boundary problems involving the $p$-Laplace operator, see \cite{Liber,Simon}.
	
	To prove a Poho\v{z}aev identity for solutions of \eqref{P1}, we need to impose some more restrictive  assumptions to improve the regularity, or we can use an approximation procedure as in \cite{DMS}. This last one is developed to the Dirichlet boundary conditions problem, but with minor details, it can be performed in the Neumann case. Nevertheless, the boundedness of the domain is required to improve the regularity. This property was successfully explored in \cite{PuSe,ITa} to get optimal $W^{2,2}_{\mathrm{loc}}$ and $W^{2,p}_{\mathrm{loc}}$ regularity respectively, and as a consequence they proved some versions of this identity. 

\medskip
 
	\begin{proposition}\label{Pohozaev} Let $f,g:\mathbb{R}\rightarrow\mathbb{R}$ be continuous functions, and $u\in C^1_{\mathrm{loc}}(\overline{\mathbb{R}^N_+})$ be a weak solution of the problem
		\begin{align*}
			\left\{
			\begin{aligned}
				-\Delta_pu&=f(u)&\quad&\text{ in }\quad\mathbb{R}_{+}^{N},\\
				\vert \nabla u \vert^{p-2}\frac{\partial u}{\partial \nu}&=g(u)&\quad&\text{ on }\quad\mathbb{R}^{N-1},
			\end{aligned}
			\right.
		\end{align*}
		satisfying $|\nabla u|^p\in L^1(\mathbb{R}^N_+)$, $|\nabla u|^{p-1} \in W^{1,1}_{\mathrm{loc}}(\mathbb{R}^N_+)$,  $F(u)\in L^1(\mathbb{R}^N_+)$, and $G(u)\in L^1(\mathbb{R}^{N-1})$  where 
		\[
		F(t)=\int_0^tf(s)ds, \ \ \mbox{ and }\ \ G(t)=\int_0^tg(s)ds.
		\]
		Then, $u$ satisfies the identity
		\[
		\frac{N-p}{p}\int_{\mathbb{R}_{+}^{N}}|\nabla u|^p\,\mathrm{d}x-N\int_{\mathbb{R}^{N}_+}F(u)\,\mathrm{d}x=(N-1)\int_{\mathbb{R}^{N-1}}G(u)\,\mathrm{d}x^\prime.
		\]
	\end{proposition}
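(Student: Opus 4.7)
The strategy is the classical Pohožaev multiplier argument adapted to the upper half-space, using $x\cdot\nabla u$ as the multiplier and integrating over the truncated domain $B_R^+:=B_R\cap\mathbb{R}^N_+$, then letting $R\to\infty$ along a suitable sequence. The hypotheses that $|\nabla u|^p$, $F(u)$ and $G(u)$ are integrable will be used exactly to kill the boundary contributions at infinity, while the assumption $|\nabla u|^{p-1}\in W^{1,1}_{\mathrm{loc}}(\mathbb{R}^N_+)$ provides the minimal regularity needed to justify the integration by parts on $|\nabla u|^{p-2}\nabla u$ (since $u$ is only $C^1_{\mathrm{loc}}$, not $C^2$). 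If extra care were needed, one could first multiply the equation by $\eta_R(x\cdot\nabla u)$ with a cutoff $\eta_R$ and approximate $|\nabla u|^{p-2}\nabla u$ by smooth vector fields, as in the Damascelli--Pacella type arguments; the key structural identities below are unaffected.

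\textbf{Step 1: integration by parts on the principal part.} I would multiply $-\Delta_p u = f(u)$ by $x\cdot\nabla u$ and integrate on $B_R^+$. Integrating the left-hand side by parts on $B_R^+$ and using the pointwise identity
\[
|\nabla u|^{p-2}\nabla u\cdot\nabla(x\cdot\nabla u)=|\nabla u|^p+\tfrac{1}{p}\,x\cdot\nabla\bigl(|\nabla u|^p\bigr),
\]
a further application of the divergence theorem to the last term yields
\[
\int_{B_R^+}|\nabla u|^{p-2}\nabla u\cdot\nabla(x\cdot\nabla u)\,dx=\Bigl(1-\tfrac{N}{p}\Bigr)\!\int_{B_R^+}\!|\nabla u|^p\,dx+\tfrac{R}{p}\!\int_{S_R^+}\!|\nabla u|^p\,d\sigma,
\]
since $x\cdot\nu=0$ on the flat part of $\partial B_R^+$. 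The resulting surface term on $S_R^+$ combines with the other boundary contribution $-\tfrac{1}{R}\int_{S_R^+}|\nabla u|^{p-2}(x\cdot\nabla u)^2\,d\sigma$ coming from the first integration by parts.

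\textbf{Step 2: use of the boundary condition and the right-hand side.} On the flat part $\Gamma_R:=B_R\cap\{x_N=0\}$ we have $x\cdot\nabla u=x'\cdot\nabla_{x'} u$ and $|\nabla u|^{p-2}\,\partial u/\partial\nu=g(u)$, so the flat boundary contribution of the first integration by parts becomes $-\int_{\Gamma_R}g(u)(x'\cdot\nabla_{x'}u)\,dx'$. Writing $g(u)(x'\cdot\nabla_{x'}u)=x'\cdot\nabla_{x'}G(u)=\mathrm{div}_{x'}(x'G(u))-(N-1)G(u)$ and integrating by parts on $\Gamma_R\subset\mathbb{R}^{N-1}$ gives
\[
-\int_{\Gamma_R}g(u)(x'\cdot\nabla_{x'}u)\,dx'=(N-1)\int_{\Gamma_R}G(u)\,dx'-R\!\int_{\partial\Gamma_R}G(u)\,d\sigma.
\]
For the right-hand side, $f(u)(x\cdot\nabla u)=\mathrm{div}(xF(u))-NF(u)$ yields $\int_{B_R^+}f(u)(x\cdot\nabla u)\,dx=R\int_{S_R^+}F(u)\,d\sigma-N\int_{B_R^+}F(u)\,dx$ since $x\cdot\nu=0$ on $\Gamma_R$.

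\textbf{Step 3: passage to the limit.} Equating the two sides and rearranging gives an identity holding for every $R>0$, where the volume integrals over $B_R^+$ and $\Gamma_R$ converge by monotone/dominated convergence to the desired integrals over $\mathbb{R}^N_+$ and $\mathbb{R}^{N-1}$. The main obstacle, and the reason for the integrability assumptions, is controlling the four boundary remainders
\[
R\!\int_{S_R^+}\!|\nabla u|^p\,d\sigma,\ \ \tfrac{1}{R}\!\int_{S_R^+}\!|\nabla u|^{p-2}(x\cdot\nabla u)^2\,d\sigma,\ \ R\!\int_{S_R^+}\!F(u)\,d\sigma,\ \ R\!\int_{\partial\Gamma_R}\!G(u)\,d\sigma.
\]
Since $|\nabla u|^p\in L^1(\mathbb{R}^N_+)$, $F(u)\in L^1(\mathbb{R}^N_+)$ and $G(u)\in L^1(\mathbb{R}^{N-1})$, Fubini in polar coordinates implies that $\int_{S_R^+}|\nabla u|^p\,d\sigma$, $\int_{S_R^+}F(u)\,d\sigma$ and $\int_{\partial\Gamma_R}G(u)\,d\sigma$ are integrable in $R$ on $(0,\infty)$, so there is a sequence $R_n\to\infty$ along which $R_n$ times each of them vanishes; the Cauchy--Schwarz bound $|x\cdot\nabla u|^2\le R^2|\nabla u|^2$ reduces the second remainder to the first. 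Letting $R=R_n\to\infty$ collapses everything to
\[
\Bigl(1-\tfrac{N}{p}\Bigr)\!\int_{\mathbb{R}^N_+}\!|\nabla u|^p\,dx+(N-1)\!\int_{\mathbb{R}^{N-1}}\!G(u)\,dx'=-N\!\int_{\mathbb{R}^N_+}\!F(u)\,dx,
\]
which is precisely the claimed Pohožaev identity after multiplying by $-1$.
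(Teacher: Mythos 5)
Your argument is correct in outline and lands on the right identity, but it handles the passage to infinity differently from the paper. You integrate over the truncated half-ball $B_R^+$ and accept boundary contributions on the spherical cap $S_R^+$ and on $\partial\Gamma_R$, which you then kill by choosing a sequence $R_n\to\infty$ along which $R_n\int_{S_{R_n}^+}(|\nabla u|^p+|F(u)|)\,d\sigma$ and $R_n\int_{\partial\Gamma_{R_n}}|G(u)|\,d\sigma$ vanish (the $\liminf$ argument from integrability in polar coordinates; note you should apply it to the sum so that one sequence works for all remainders at once). The paper instead never produces curved-boundary terms: it multiplies by a cutoff $\varphi_R$ equal to $1$ on $B_{R/2}$ and vanishing on $\partial B_R\cap\mathbb{R}^N_+$, rewrites everything as the divergence of the continuous field $\mathbf v_R=[(x\cdot\nabla u)|\nabla u|^{p-2}\nabla u-\tfrac1p x|\nabla u|^p]\varphi_R$, verifies the divergence identity distributionally (this is where $|\nabla u|^{p-1}\in W^{1,1}_{\mathrm{loc}}$ and the critical set $\{\nabla u=0\}$ are dealt with), and applies a generalized divergence theorem whose only boundary trace is on the flat part $\Gamma_R$; the price is an extra term $\int_{B_R\setminus B_{R/2}}[\cdots]\cdot\nabla\varphi_R\,dx$, bounded by $C\int_{B_R\setminus B_{R/2}}|\nabla u|^p\,dx$ and by the corresponding tails of $F(u)$ and $G(u)$, which tend to zero automatically, so no subsequence of radii is needed. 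What the paper's route buys is precisely that at $C^{1}_{\mathrm{loc}}$ regularity one never has to make sense of surface integrals such as $\int_{S_R^+}|\nabla u|^{p-2}(x\cdot\nabla u)^2\,d\sigma$ or of a Gauss--Green formula up to the curved boundary for a field that is only continuous with integrable distributional divergence; your route buys a more classical and shorter computation, at the cost of having to justify those traces (for a.e.\ $R$, via the approximation you sketch) and the simultaneous choice of good radii. Since you flag the low-regularity justification explicitly and it can be carried out along the lines you indicate (or by citing the same tools the paper uses), I regard this as a legitimate alternative proof rather than a gap.
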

	\begin{proof}
		The proof is similar to the one in \cite[Theorem 4.2]{ITa}. Thus, we will only point out the main steps. Let $\Omega_R:=B_R(0)\cap \overline{\mathbb{R}^N_+}$ be fixed. Since $u\in C^1_{\mathrm{loc}}(\mathbb{R}^N_+)\cap W^{2,2}_{\mathrm{loc}}(\mathbb{R}^N_+)$, by formal calculation we have
		\begin{align*}
			{
				\rm{div}}\left((x\cdot\nabla u)|\nabla u|^{p-2}\nabla u\right)&=(x\cdot\nabla u)\Delta_p u+\nabla(x\cdot\nabla u)\cdot|\nabla u|^{p-2}\nabla u\\
			&=(x\cdot\nabla u)\Delta_p u+|\nabla u|^p+\frac{1}{p}\left(x\cdot\nabla (|\nabla u|^{p})\right)
		\end{align*}
		and
		$$
		{\rm{div}}\left(x|\nabla u|^{p}\right)=N|\nabla u|^p+\left(x\cdot\nabla (|\nabla u|^{p})\right)
		$$
		which holds in 
		\[
		\Omega_{R,0}:=\left\{x\in\Omega_R\ :\ |\nabla u(x)|\neq0\right\}.
		\]
		Consequently, 
		\begin{equation}\label{boa}
			{\rm{div}}\left((x\cdot\nabla u)|\nabla u|^{p-2}\nabla u-\frac{1}{p}x|\nabla u|^p\right)=(x\cdot\nabla u)\Delta_p u+\frac{p-N}{p}|\nabla u|^p\quad\mbox{in}\ \ \Omega_{R,0}.
		\end{equation}
		Now, consider $\varphi_{_R}\in C^{1,+}_0(\Omega_{R})$ such that $\varphi_{_R}(x)=1$ if $|x|\leq R/2$, $\varphi_{_R}(x)=0$ if $|x|\geq R$, and $|\nabla \varphi_{_R}(x)|\leq2/R$, where
		\[
		C^{1,+}_0(\Omega_{R}):=\left\{\varphi\in C^1(\Omega)\mid \varphi_{|_{\partial B_R\cap\mathbb{R}^N_+}}=0\right\}.
		\]
		Defining the vector field $\mathbf{v}_{_R}:\Omega_R\rightarrow\mathbb{R}^N$ by
		\begin{equation}\label{vector}
		    \mathbf{v}_{_R}(x):=\left[(x\cdot\nabla u)|\nabla u|^{p-2}\nabla u - \frac{1}{p}x|\nabla u|^p\right]\varphi_{_R}, \ \ x\in\Omega_R,
		\end{equation}
		which is continuous by virtue of our hypotheses. Since
		$$
		-\Delta_p u=f(u)\ \mbox{ in }\ \Omega_{R,0},
		$$
		from \eqref{boa},  we have
		\begin{multline}\label{boa2}
		    -\mathrm{div}(\mathbf{v}_R)=\\\left[f(u(x))(x\cdot\nabla u)+\frac{N-p}{p}|\nabla u|^p\right]\varphi_R+\left[(x\cdot\nabla u)|\nabla u|^{p-2}\nabla u - \frac{1}{p}x|\nabla u|^p\right]\nabla\varphi_R,
		\end{multline}
		almost everywhere in the open subset $\Omega_{R,0}$. Next, we are going to verify that \eqref{boa2} holds in $\Omega_R$. Note that for any $\psi\in C^{1}_0(\Omega_R)$ we have
		\begin{align*}
			\nonumber-\int_{\Omega_R}\mathbf{v}_{_R}\cdot \nabla \psi\,\mathrm{d}x &=\int_{\Omega_R}\left[\left(f(u)(x\cdot\nabla u)+\frac{N-p}{p}|\nabla u|^p\right)\varphi_{_R}\right.\\
			&+\left.\left((x\cdot\nabla u)|\nabla u|^{p-2}\nabla u - \frac{1}{p}x|\nabla u|^p\right)\nabla\varphi_{_R}\right]\psi\,\mathrm{d}x.
		\end{align*}
		Consequently, \eqref{boa2} holds in the distribution sense in $\Omega_R$. For this, applying a version of the divergence theorem (c.f. \cite[Lemma A.1]{CT}) to the vector field $\mathbf{v}_R$, we obtain
		\begin{align*}
			\int_{\Gamma_R}\mathbf{v}_{_R}\cdot\nu(x)\,\mathrm{d}x^\prime &=\int_{\Omega_R}\left[f(u)(x\cdot\nabla u)+\frac{N-p}{p}|\nabla u|^p\right]\varphi_{_R}\,\mathrm{d}x\\
			&+\int_{\widetilde{\Omega}_R}\left[(x\cdot\nabla u)|\nabla u|^{p-2}\nabla u - \frac{1}{p}x|\nabla u|^p\right]\nabla\varphi_{_R}\,\mathrm{d}x,
		\end{align*}
		where $\widetilde{\Omega}_R:=B_{R}\setminus B_{R/2}$. Since $|\nabla \varphi_{_R}(x)|\leq2/R$ and $|\nabla u|\in L^p(\mathbb{R}^N)$ we get   
		\begin{align*}
			\left|\int_{\widetilde{\Omega}_R}\left[(x\cdot\nabla u)|\nabla u|^{p-2}\nabla u - \frac{1}{p}x|\nabla u|^p\right]\nabla\varphi_{_R}\,\mathrm{d}x\right|&\leq\frac{p+1}{p}\int_{\widetilde{\Omega}_R}|x||\nabla u|^p|\nabla\varphi_R|\,\mathrm{d}x\\
			&\leq\frac{2(p+1)}{p}\int_{\widetilde{\Omega}_R}|\nabla u|^p\,\mathrm{d}x\rightarrow 0,
		\end{align*}
		as $R\rightarrow+\infty$. Let $(R_k)_{k\in\mathbb{N}}$  be a sequence such that $R_k\rightarrow+\infty$ as $k\rightarrow+\infty$ and denote by $\varphi_k:=\varphi_{_{R_k}}\in C^{1,+}_0(\Omega_{R_k})$. 
		For simplicity let us denote by $\mathbf{v}_{k}$ the vector field $\mathbf{v}_{R_k}$ defined in \eqref{vector}. Hence, the above identity turns 
		\begin{equation}\label{limite}
			\int_{\Gamma_{R_k}}g(u)(x\cdot\nabla u) \varphi_k\,\mathrm{d}x^\prime =\int_{\Omega_{R_k}}\left[f(u)(x\cdot\nabla u)+\frac{N-p}{p}|\nabla u|^p\right]\varphi_k\,\mathrm{d}x+o(R_k).
		\end{equation}
		Now, to pass the limit in \eqref{limite}, we observe that using integration by parts,
		\begin{align*}
			\int_{\Omega_{R_k}}f(u)(x\cdot\nabla u)\varphi_k\,\mathrm{d}x
			&=\sum_{i=1}^N\int_{\Omega_{R_k}}\left(F(u)\right)_{x_i}x_i\varphi_k\,\mathrm{d}x\\
			&=\int_{\Omega_{R_k}}\left[-NF(u)\varphi_k-F(u)(x\cdot\nabla\varphi_k)\right]\,\mathrm{d}x.
		\end{align*}
		Clearly, by the dominated convergence theorem one has	$$\lim_{k\rightarrow+\infty}\int_{\Omega_{R_k}}F(u)\varphi_k\,\mathrm{d}x=\int_{\mathbb{R}^N_+}F(u)\,\mathrm{d}x,
		$$
		and using the estimate on $\nabla\varphi_k$ and the hypothesis of integrability on $F(u)$, we get
		\begin{align*}
		\left|\int_{\Omega_{R_k}}F(u)(x\cdot\nabla\varphi_k)\,\mathrm{d}x\right|&=\left|\int_{\widetilde{\Omega}_{R_k}}F(u)(x\cdot\nabla\varphi_k)\,\mathrm{d}x\right|\\&\leq 2\int_{\widetilde{\Omega}_{R_k}}|F(u)|\,\mathrm{d}x\rightarrow0,\ \ \mbox{ as }\ \ k\rightarrow+\infty.
		\end{align*}
		
		Observing that 
		$$
		\begin{aligned}
		\int_{\Gamma_{R_k}}&g(u)(x\cdot\nabla u) \varphi_k\,\mathrm{d}x^\prime=\\
		&\sum_{i=1}^{N-1}\int_{\Gamma_{R_k}}\left(G(u)\right)_{x_i}x_i\varphi_k\,\mathrm{d}x^\prime
		&=\int_{\Gamma_{R_k}}\left[-(N-1)G(u)\varphi_k-G(u)\left(x^\prime\cdot
		\nabla\varphi_k\right)\right]\,\mathrm{d}x^\prime,
		\end{aligned}
		$$
		and using a similar argument, we have
		$$
		\lim_{k\rightarrow+\infty}\int_{\Gamma_{R_k}}G(u)\varphi_k\,\mathrm{d}x^\prime= \int_{\mathbb{R}^{N-1}}G(u)\,\mathrm{d}x^\prime\quad\mbox{and}\quad \lim \int_{\Gamma_{R_k}}G(u)\left(x^\prime\cdot
		\nabla\varphi_k\right)\,\mathrm{d}x^\prime=0.
		$$
		Furthermore, by the dominated convergence theorem,
		$$
		\lim_{k\rightarrow+\infty}\int_{\Omega_{R_k}}|\nabla u|^p\varphi_k\,\mathrm{d}x=\int_{\mathbb{R^N_+}}|\nabla u|^p\,\mathrm{d}x.
		$$
		Taking the limit in \eqref{limite} and using the above estimates, we obtain the desired result.
	\end{proof}
	\begin{remark}
		Optimal regularity results were obtained in \cite{ITa} for Neumann boundary conditions since the solutions are at least $C^1$ in a bounded domain. In our case, this hypothesis is trivially satisfied in view of Lemma \ref{13}. The assumption $|\nabla u|^{p-1}\in W^{1,1}_{\mathrm{loc}}(\mathbb{R}^N_+)$ was proved in \cite{Lou}. Furthermore, it is worth mentioning that in Proposition \ref{Pohozaev}, no hypothesis about decay is required.
	\end{remark}
	
	\begin{proof}[Proof of Theorem~\ref{T0}]Let $u_\lambda\in E\cap C_{\mathrm{loc}}^1(\overline{\mathbb{R}^N_+})$ be a weak solution of Problem \eqref{P1} with defined signal. Applying Proposition \ref{Pohozaev} with $f=0$ and $g(t)=|t|^{q-2}t-\lambda |t|^{p-2}t$ we obtain
		\[
		\lambda\left(\frac{p-1}{p}\right)\int_{\mathbb{R}^{N-1}}|u_\lambda|^p\mathrm{d}x'=\left( \frac{N-1}{q}+1-\frac{N}{p} \right)\int_{\mathbb{R}^{N-1}}|u_\lambda|^q\mathrm{d}x'. 
		\]
		Consequently, $i)$, $ii)$, and $iii)$ follow immediately from the above identity. To prove $iv)$, notice that, since $u$ is nontrivial solution to Problem \eqref{P1}, we have 
		\[
		\int_{\mathbb{R}^{N-1}}\left(|u_\lambda|^q-\lambda|u_\lambda|^p\right)\mathrm{d}x'=\int_{\mathbb{R}^{N}_+}|\nabla u_\lambda|^p\mathrm{d}x, 
		\]
		and hence, there exists $x_0\in\mathbb{R}^{N-1}$ such that 
		$
		|u_\lambda(x_0)|^q-\lambda|u_\lambda(x_0)|^p\geq0$ which implies that $\lambda\leq |u_\lambda(x_0)|^{q-p}\leq\|u_\lambda\|^{q-p}_{L^\infty(\mathbb{R}^{N-1})}$ and this concludes the proof.
	\end{proof}
	\begin{remark}
		When $f(u)=\lambda|u|^{p-2}u$ and $g(u)=|u|^{q-2}u$, in \cite[Theorem 1.7]{rossi} was proved a nonexistence result of solutions with some restrictive hypotheses, such as decaying and smoothness. Using Proposition \ref{Pohozaev}, we can remove all these assumptions. Observe that the function $u(x)=e^{-x_N}$ is a solution of \eqref{P1} for every $q$ with $\lambda=p-1$ that does not satisfies the integrability conditions in Proposition~\ref{Pohozaev}.
	\end{remark}

	\section{Symmetry of solutions}
In this section, we prove that weak positive solutions of Problem \eqref{P1} are radially symmetric concerning the first $N-1$ variables. The Moving planes method plays a fundamental role in establishing some qualitative properties of positive solutions of nonlinear elliptic equations like symmetry and monotonicity. Using these ideas, E. Abreu et al. \cite[Proposition 4.1]{EMEDOOEVE2010} proved that every weak positive solution of Problem \eqref{P1} is radially symmetric concerning the first $N-1$ variables when $p=2$. The fact that the Laplacian operator is linear and non-degenerate played a fundamental role in their proof. The proof of Theorem~\ref{simetria} relies on the following lemmas. 

  \medskip

	\begin{lemma}\label{negative}
		Let $a$ and $b$ real numbers satisfying $0<b<a<e^{-1/p}$. Then the function $f:[p,+\infty)\rightarrow\mathbb{R}$ defined by
		$f(t):=a^t-b^t$
		is decreasing.
	\end{lemma}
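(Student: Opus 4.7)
The plan is to reduce the claim to a one-variable monotonicity statement by differentiating in $t$ and then analyzing an auxiliary function of the base. Since $a,b\in(0,1)$, both $\ln a$ and $\ln b$ are negative, so the usual intuition ``larger base $\Rightarrow$ larger value'' reverses once we weight by a logarithm, and this is exactly where the upper bound $e^{-1/p}$ enters.

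First, I would compute
\[
f'(t)=a^t\ln a-b^t\ln b,
\]
so proving $f$ is (strictly) decreasing on $[p,+\infty)$ amounts to showing $a^t\ln a<b^t\ln b$ for every $t\ge p$, or equivalently $-a^t\ln a>-b^t\ln b$. The idea is then to fix $t\ge p$ and consider the auxiliary function
\[
h(x):=-x^t\ln x,\qquad x\in(0,1),
\]
so that the inequality we need becomes $h(a)>h(b)$.

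A direct calculation gives
\[
h'(x)=-t\,x^{t-1}\ln x-x^{t-1}=-x^{t-1}\bigl(t\ln x+1\bigr),
\]
which is strictly positive precisely when $\ln x<-1/t$, i.e.\ on the interval $(0,e^{-1/t})$. Hence $h$ is strictly increasing on $(0,e^{-1/t})$.

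Finally, the hypothesis $t\ge p$ yields $1/t\le 1/p$ and therefore $e^{-1/p}\le e^{-1/t}$, so the interval $(0,e^{-1/p})$ on which $a$ and $b$ live is contained in the monotone region of $h$. Since $0<b<a<e^{-1/p}$, this gives $h(b)<h(a)$, i.e.\ $-b^t\ln b<-a^t\ln a$, so $f'(t)<0$ for every $t\ge p$, proving that $f$ is strictly decreasing on $[p,+\infty)$. The one subtle point, and the only place the precise bound $e^{-1/p}$ is used, is the comparison $e^{-1/p}\le e^{-1/t}$ that makes $h$ increasing uniformly in $t\ge p$ on the relevant interval; everything else is an elementary calculus computation.
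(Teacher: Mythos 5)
Your proof is correct and essentially identical to the paper's: both differentiate $f$, reduce to comparing an auxiliary base-function (the paper's $g(s)=s^t\ln s$, your $h=-g$), compute its derivative to find the monotone interval $(0,e^{-1/t})$, and use $t\ge p$ to conclude that $(0,e^{-1/p})$ lies inside it. The only cosmetic difference is the sign convention.
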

	\begin{proof}
		Note that the derivative of $f$ is given by $f'(t)=\ln(a)a^t-\ln(b)b^t=g(a)-g(b)$, where  
		$g(s):=s^t\ln(s)$ for 
		$s>0$.
		By a direct computation we see that $g'(s)\leq0$ for all $s\leq e^{-1/p}$ and the conclusion follows.
	\end{proof}
	Now, we need to introduce some basic notation and terminology. For each $\lambda >0$, we consider 
	\begin{align*}
		\mbox{$T$}_\lambda&=\left\{ x= (x_1,\overline{x},x_N)\in \mathbb{R}\times\mathbb{R}^{N-2}\times\mathbb{R}_{+}\text{ : } x_1=\lambda \right\} \\
		\mbox{${\sum}$}_\lambda &=\left\{ x= (x_1,\overline{x},x_N)\in \mathbb{R}\times\mathbb{R}^{N-2}\times\mathbb{R}_{+}\text{ : } x_1>\lambda  \right\},\\
		\mbox{${\sum}$}_{\lambda,0}&=\left\{x\in\mbox{${\sum}$}_\lambda\text{ : }x_N=0\right\} \\ x_\lambda &=(2\lambda -x_1,\overline{x},x_N), \ \ u_\lambda(x)=u(x_\lambda) \mbox{ and }\ e_\lambda=(2\lambda,\overline{0},0),\\
		U_\lambda(x)&=u(x)-u_\lambda(x)\ \mbox{and}\ \ U_{\lambda,+}=\max\{U_\lambda,0\}.
	\end{align*}
	Our first step is to prove the following lemma.

 \medskip
 
	\begin{lemma}
		For $\lambda$ large enough we have $U_\lambda(x)\leq0$ for all $x\in{\sum}_\lambda$.
	\end{lemma}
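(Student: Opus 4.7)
The plan is to carry out the first step of the moving planes scheme in its classical form, adapted to the $p$-Laplacian with a reactive boundary. To avoid notational conflict I will write $\mu$ for the parameter $\lambda$ appearing in Problem \eqref{P1}, reserving $\lambda$ for the level of the moving plane. Since both $\Delta_p$ and the boundary operator are invariant under the reflection $x\mapsto x_\lambda$, the function $u_\lambda$ solves \eqref{P1} in $\Sigma_\lambda$ with the same boundary condition on $\Sigma_{\lambda,0}$. I would use $U_{\lambda,+}$, extended by $0$ outside $\Sigma_\lambda$, as a test function in \eqref{Def1} for both $u$ and $u_\lambda$; by Theorem \ref{T-Regularity-Decay} this extension lies in $E$, and the density Lemma \ref{0006} justifies its admissibility. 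Subtracting the two identities yields
\[ \int_{\Sigma_\lambda}\bigl(|\nabla u|^{p-2}\nabla u-|\nabla u_\lambda|^{p-2}\nabla u_\lambda\bigr)\cdot\nabla U_{\lambda,+}\,\mathrm{d}x = \int_{\Sigma_{\lambda,0}}\bigl[(u^{q-1}-u_\lambda^{q-1})-\mu(u^{p-1}-u_\lambda^{p-1})\bigr]U_{\lambda,+}\,\mathrm{d}x'. \]

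On the left, Simon's inequality (\cite[Lemma 2.2]{Simon}, as invoked in the proof of Theorem \ref{T-Existence}) shows the integrand is pointwise nonnegative and controls $|\nabla U_{\lambda,+}|^p$ from below when $p\ge 2$, or its weighted analogue $|\nabla U_{\lambda,+}|^2/(1+|\nabla u|+|\nabla u_\lambda|)^{2-p}$ when $1<p<2$; hence the left-hand side vanishes only if $\nabla U_{\lambda,+}\equiv 0$ almost everywhere in $\Sigma_\lambda$.

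For the right-hand side, I would exploit the polynomial decay from Theorem \ref{T-Regularity-Decay}: every $x\in\Sigma_{\lambda,0}$ satisfies $|x|\ge\lambda$, so $\sup_{\Sigma_{\lambda,0}}u\to 0$ as $\lambda\to\infty$. Fixing $\delta:=\bigl(\mu(p-1)/(q-1)\bigr)^{1/(q-p)}$ and choosing $\lambda$ so large that $u\le\delta$ on $\Sigma_{\lambda,0}$, one has $0\le u_\lambda<u\le\delta$ on $\{U_\lambda>0\}\cap\Sigma_{\lambda,0}$. The auxiliary function $h(s):=s^{q-1}-\mu s^{p-1}$ has derivative $h'(s)=s^{p-2}\bigl[(q-1)s^{q-p}-\mu(p-1)\bigr]\le 0$ throughout $[0,\delta]$, so integrating from $u_\lambda$ to $u$ yields $h(u)-h(u_\lambda)\le 0$ on the support of $U_{\lambda,+}$, making the right-hand side nonpositive. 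Combining both signs forces $\nabla U_{\lambda,+}\equiv 0$; since $U_{\lambda,+}\ge 0$ decays to $0$ at infinity, it must vanish identically, so $U_\lambda\le 0$ throughout $\Sigma_\lambda$.

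The main obstacle I anticipate is not the algebra but the rigorous justification that $U_{\lambda,+}$ (extended by $0$) is admissible as a test function in \eqref{Def1} on the unbounded cap $\Sigma_\lambda$: this requires careful approximation by compactly supported smooth functions via Lemmas \ref{este2}--\ref{0006}, together with control of the trace through the $L^\infty$ bound of Lemma \ref{13} and the decay of Theorem \ref{T-Regularity-Decay}. A secondary technical point is the singular regime $1<p<2$, where the weighted form of Simon's inequality must be combined with the $C^{1,\alpha}$ regularity of solutions to draw the same conclusion from $\nabla U_{\lambda,+}\equiv 0$.
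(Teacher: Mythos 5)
Your proposal is correct and follows essentially the same strategy as the paper: reflect, subtract the two weak formulations, test against $U_{\lambda,+}$, invoke Simon's inequality to force $\nabla U_{\lambda,+}=0$, and use the decay of Theorem~\ref{T-Regularity-Decay} to make the boundary contribution nonpositive for $\lambda$ large. Where you defer to a density argument, the paper actually carries out the approximation: it multiplies $U_{\lambda,+}$ by an explicit radial cutoff $\phi_\epsilon$ (vanishing on a small ball around $e_\lambda$ and outside a large ball), keeps the resulting commutator term $\int (|\nabla u|^{p-2}\nabla u-|\nabla u_\lambda|^{p-2}\nabla u_\lambda)\cdot\nabla\phi_\epsilon\, U_{\lambda,+}$, and shows it vanishes as $\epsilon\to 0$ via H\"older with $\|\nabla\phi_\epsilon\|_{L^N}$ bounded and $\|U_{\lambda,+}\|_{L^{p^*}(\tilde B_\epsilon)}\to 0$; this is precisely the step you anticipated as the main obstacle, and the paper's (6.3)--(6.4) is a concrete way to close it. For the boundary sign, your auxiliary function $h(s)=s^{q-1}-\mu s^{p-1}$ with threshold $\delta=(\mu(p-1)/(q-1))^{1/(q-p)}$ is a clean reformulation of the paper's Lemma~\ref{negative} (monotonicity of $t\mapsto a^t-b^t$ for small $a,b$); in fact yours is slightly preferable, since it keeps track of the parameter $\mu$ from Problem~\eqref{P1}, which the paper's equation (6.2) silently sets to $1$. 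Both arguments need $p<q$ and rely on the decay to push the trace values below the threshold for $\lambda$ large, so the two proofs are substantively equivalent.
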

	\begin{proof}	
		We start rewriting \eqref{P1} as 
		\begin{align}\label{sym:2}
			\left\{
			\begin{alignedat}{2}
				-\Delta_p u+\Delta_pu_\lambda&=0&\quad&\text{ in }\quad \mathbb{R}^N_+,\\
				\vert \nabla u \vert^{p-2}\frac{\partial u}{\partial \nu}-\vert \nabla u_\lambda \vert^{p-2}\frac{\partial u_\lambda}{\partial \nu}&= (u^+)^{q-1}-u^{p-1}-(u^+)_\lambda^{q-1}+ u_\lambda^{p-1}&\quad&\text{ on }\quad\partial\mathbb{R}_{+}^{N}.
			\end{alignedat}
			\right.
		\end{align}
		Let $\epsilon$ be a positive real number small enough and consider 
		 $\phi\in C^1_c(\mathbb{R})$ a cut-off function such that $0\leq\phi(t)\leq1$,
		\begin{align}\label{funcoes}
			\phi(t)=\left\{
			\begin{array}{ll}
				0 & \mbox{if}\ \ |t|\leq 1\ \ \mbox{or}\ \  |t|\geq2\epsilon^{-2}\\
				1& \mbox{if}\ \ 2\leq|t|\leq\epsilon^{-2}.
			\end{array}
			\right.
			&\mbox{and}&
			\phi'(t)\leq\left\{
			\begin{array}{ll}
				2 & \mbox{if}\ \ 1\leq|t|\leq2,\\
				2\epsilon^2& \mbox{if}\ \ \epsilon^{-2}\leq|t|\leq2\epsilon^{-2}.
			\end{array}
			\right.
		\end{align}
		Now, setting $\phi_\epsilon(x):=\phi\left(|x-e_\lambda|\epsilon^{-1}\right)$ with $\epsilon<2/\lambda$, we have the function $\varphi_\epsilon=\phi_\epsilon\cdot U_{\lambda,+}$ satisfies $\mathrm{supp}(\varphi_\epsilon)\cap\overline{\mathbb{R}^N_+} \subset\overline{{\sum}_\lambda}$. By using it as  test function in \eqref{sym:2} we have
		\begin{multline*}
		\int_{\sum_\lambda}\phi_\epsilon\left( \vert \nabla u \vert^{p-2}\nabla u- \vert \nabla u_\lambda \vert^{p-2}\nabla u_\lambda\right)\nabla U_{\lambda,+}=\\ \int_{\sum_{\lambda,0}}\phi_\epsilon\left( u^{q-1}-u^{p-1}-u_\lambda^{q-1}+ u_\lambda^{p-1}\right)U_{\lambda,+}\\
			-\int_{\sum_\lambda}\left( \vert \nabla u \vert^{p-2}\nabla u- \vert \nabla u_\lambda \vert^{p-2}\nabla u_\lambda\right)\nabla\phi_\epsilon U_{\lambda,+}.
		\end{multline*}
		Setting $\sum_{\lambda,\epsilon}:=\sum_\lambda\cap (B_{\frac{1}{\epsilon}}(e_\lambda)\backslash B_{2\epsilon}(e_\lambda))$, by the definition of $\phi$ we get
		\begin{multline*}
			\int_{\sum_{\lambda,\epsilon}}\left( \vert \nabla u \vert^{p-2}\nabla u- \vert \nabla u_\lambda \vert^{p-2}\nabla u_\lambda\right)\nabla U_{\lambda,+}\leq\\ \int_{\sum_{\lambda,0}}\left( u^{q-1}-u_\lambda^{q-1}-(u^{p-1}- u_\lambda^{p-1})\right)U_{\lambda,+}\\
			+\int_{\sum_\lambda}\left|\vert \nabla u \vert^{p-2}\nabla u- \vert \nabla u_\lambda \vert^{p-2}\nabla u_\lambda\right|\vert\nabla\phi_\epsilon\vert U_{\lambda,+}.
		\end{multline*}
		For $\lambda$ large enough, from the estimate given in Theorem \ref{T-Regularity-Decay} and using that $p<q$ together with the monotonicity in Lemma~\ref{negative} we have
		\begin{multline}\label{sym:2.1}
			\int_{\sum_{\lambda,\epsilon}}\left( \vert \nabla u \vert^{p-2}\nabla u- \vert \nabla u_\lambda \vert^{p-2}\nabla u_\lambda\right)\nabla U_{\lambda,+}\leq\\ \int_{\sum_\lambda}\left|\vert \nabla u \vert^{p-2}\nabla u- \vert \nabla u_\lambda \vert^{p-2}\nabla u_\lambda\right|\vert\nabla\phi_\epsilon\vert U_{\lambda,+}.
		\end{multline}
		Applying H\"{o}lder's inequality in the above inequality, we get
		\begin{align}\label{sym:3}
			\int_{\sum_{\lambda,\epsilon}}\left( \vert \nabla u \vert^{p-2}\nabla u-\right.& \left.\vert \nabla u_\lambda \vert^{p-2}\nabla u_\lambda\right)\nabla U_{\lambda,+}\nonumber\\
			&\leq \left(\vert\vert \nabla u \vert\vert_{L^p(\sum_\lambda)}^{p-1}+ \vert\vert \nabla u_\lambda \vert\vert_{L^p(\sum_\lambda)}^{p-1}\right)\vert\vert U_{\lambda,+}\nabla\phi_\epsilon \vert\vert_{L^p(\sum_\lambda)} .
		\end{align}
		Defining $\tilde{B_\epsilon}:=\{x\in\sum_\lambda \text{ : } \epsilon<\vert x-e_\lambda\vert<2\epsilon, \ \mbox{or}\ \epsilon^{-1}<\vert x-e_\lambda\vert<(2\epsilon)^{-1}\}$, from (\ref{funcoes}) and (\ref{sym:3}) we have again by Theorem \ref{T-Regularity-Decay} and H\"{o}lder's inequality
		\begin{multline}\label{sym:4}
			\vert\vert\nabla\phi_\epsilon\cdot  U_{\lambda,+}\vert\vert_{L^p(\sum_\lambda)}\leq\\ \vert\vert\nabla\phi_\epsilon  \vert\vert_{L^N(\tilde{B_\epsilon})}   \vert\vert U_{\lambda,+}  \vert\vert_{L^{p^*}(\tilde{B_\epsilon})} \leq \\C\left(\int_{\tilde{B_\epsilon}}\left(u-u_\lambda\right)^{p^*}_+\right)^{(N-p)/(pN)}\rightarrow 0,
		\end{multline}
		as $\epsilon\rightarrow0$. If $p\geq2$, letting $\epsilon\rightarrow 0$ in \eqref{sym:2.1}, \eqref{sym:3} and \eqref{sym:4}, we conclude that exists a constant $c_p>0$ such that
		\begin{align*}
			c_p \vert \vert\nabla( u- u_\lambda)\vert \vert^p_{L^p(\sum_\lambda)}\leq \int_{\sum_{\lambda}}\left( \vert \nabla u \vert^{p-2}\nabla u- \vert \nabla u_\lambda \vert^{p-2}\nabla u_\lambda\right)\nabla U_{\lambda,+} \leq 0,
		\end{align*} 
        where in the last inequalities, we used a classical inequality (see, for instance, \cite[Lemma 2.1]{Simon}). If $1<p<2$ we have
		\begin{multline*}
			c_p \dfrac{\vert \vert\nabla( u- u_\lambda)\vert \vert^2_{L^2(\sum_\lambda)}}{(\vert \vert\nabla u\vert\vert_{L^2(\sum_\lambda)} +\vert\vert \nabla u_\lambda\vert \vert_{L^2(\sum_\lambda)})^{2-p}}\\
			\leq \int_{\sum_{\lambda}}\left( \vert \nabla u \vert^{p-2}\nabla u- \vert \nabla u_\lambda \vert^{p-2}\nabla u_\lambda\right)\nabla U_{\lambda,+} \leq 0,
		\end{multline*} 
		where in the last inequality, we used Simon's inequality again. Therefore, we obtain $\vert \nabla( u- u_\lambda)\vert=0$ in $\mbox{\normalsize${\sum}$}_\lambda$ and consequently $u- u_\lambda=c$ in $\sum_\lambda$. Thus, $c\leq c+u_\lambda=u\rightarrow0$ as $|x|\rightarrow\infty$ which implies the result.
	\end{proof}
	Let us define
	\begin{align*}
		\Lambda:=
		\inf\left\{\lambda>0 :  U_\mu\leq0, \ \forall x\in\mbox{\normalsize${\sum}$}_\mu,\ \mu>\lambda\right
		\},
	\end{align*}
	we have the following result.

\medskip
 
	\begin{lemma}\label{sym:5}
		The number $\Lambda=0$.
	\end{lemma}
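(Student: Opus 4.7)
The plan is to argue by contradiction: assume $\Lambda>0$ and show that the plane can be pushed strictly past $T_\Lambda$, contradicting the infimum definition of $\Lambda$. By continuity of $u$, choosing any sequence $\mu_n\searrow\Lambda$ with $U_{\mu_n}\leq 0$ in $\Sigma_{\mu_n}$ and passing to the limit yields $U_\Lambda\leq 0$ in $\Sigma_\Lambda$. Before attempting to push the plane I would rule out the degenerate situation $U_\Lambda\equiv 0$ in $\Sigma_\Lambda$: if this identity held, then $u$ would be symmetric across $T_\Lambda$, and combining this symmetry with the decay $u(x)=O(|x|^{(p-N)/(p-1)})$ from Theorem~\ref{T-Regularity-Decay} and the positivity of $u$ yields a contradiction (a single strict reflection about a plane $T_\Lambda$ with $\Lambda>0$ is incompatible with the integrability $u\in E$ and the nonlinear boundary condition restricted to $T_\Lambda\cap\mathbb{R}^{N-1}$).

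Once $U_\Lambda\not\equiv 0$ in $\Sigma_\Lambda$ is established, I would apply the strong comparison principle for the $p$-Laplacian due to Damascelli (\cite{damas,damas2}) to conclude $U_\Lambda<0$ strictly on every connected component of $\Sigma_\Lambda\setminus Z$, where $Z:=\{\nabla u=0\}\cup\{\nabla u_\Lambda=0\}$ denotes the (closed, empty-interior) critical set. The next step is to show that for $\epsilon>0$ sufficiently small one still has $U_{\Lambda-\epsilon}\leq 0$ in $\Sigma_{\Lambda-\epsilon}$, thereby contradicting the minimality of $\Lambda$. To this end, I would rerun the test-function computation from the previous lemma with $\lambda:=\Lambda-\epsilon$, plugging $\phi_\eta\,U_{\Lambda-\epsilon,+}$ into \eqref{sym:2}; this produces the same Simon-type integral inequality, leaving only the boundary contribution on the set $\{U_{\Lambda-\epsilon,+}>0\}$ to be controlled.

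The crucial observation is that $\{U_{\Lambda-\epsilon,+}>0\}$ splits naturally into two pieces. On the far-field piece $\{|x|\geq R\}$, Theorem~\ref{T-Regularity-Decay} ensures $u,u_{\Lambda-\epsilon}<e^{-1/p}$ for $R$ large, so Lemma~\ref{negative} renders the integrand $u^{q-1}-u_{\Lambda-\epsilon}^{q-1}-\lambda(u^{p-1}-u_{\Lambda-\epsilon}^{p-1})$ nonpositive. On the near-field piece $\{|x|\leq R\}$, the strict inequality $U_\Lambda<0$ on the compact portion of $\Sigma_\Lambda\cap B_R$ away from $Z$, combined with uniform continuity of $(x,\lambda)\mapsto U_\lambda(x)$, forces $\{U_{\Lambda-\epsilon,+}>0\}\cap B_R$ to lie inside a slab of width at most $\epsilon$ adjacent to $T_\Lambda$ together with a thin tubular neighborhood of $Z\cap B_R$; both pieces have Lebesgue measure vanishing as $\epsilon\to 0$. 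Feeding this smallness back through the H\"older and Sobolev estimates (\eqref{sym:3}--\eqref{sym:4}) absorbs the bad terms and gives $U_{\Lambda-\epsilon,+}\equiv 0$, which is the desired contradiction.

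The principal obstacle is the strong comparison step yielding $U_\Lambda<0$ strictly in $\Sigma_\Lambda\setminus Z$: since $-\Delta_p$ is degenerate (or singular) on $Z$, the classical linear strong maximum principle does not apply directly, and one must invoke the quantitative comparison machinery of Damascelli together with a measure-theoretic argument showing that $Z$ is negligible in the sense required for the integral estimates to close. A secondary but subtler difficulty is excluding the symmetric scenario $U_\Lambda\equiv 0$, which genuinely uses the precise structure of the nonlinear boundary condition and the polynomial decay established in Theorem~\ref{T-Regularity-Decay}.
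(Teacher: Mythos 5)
Your proof takes a genuinely different route from the paper's. The paper does not invoke a strong comparison principle for the difference $u-u_\Lambda$ or a measure-theoretic absorption argument \`a la Damascelli--Pacella. Instead it splits into three cases according to the sign of $u_{x_1}$ on $T_\Lambda$: Case~1 (uniformly negative) and Case~2 (negative but not uniformly, handled via Vazquez's strong maximum principle applied to the auxiliary function $v=u-u(x^k_{\Lambda-\epsilon})$, which solves the homogeneous equation and hence does not need a comparison principle for differences of two $p$-harmonic functions) are dispatched directly, while Case~3 ($u_{x_1}\geq 0$) is the main one and is handled via Picone's identity, which forces $u$ to be proportional to $u_\Lambda$ on $\Sigma_\Lambda$ and ultimately constant in $x_1$, contradicting decay. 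Thus the paper sidesteps exactly what you identify as the ``principal obstacle'' --- the degenerate strong comparison step --- by not using it at all. Your approach, if completed, would be a legitimate alternative closer to the Damascelli--Sciunzi and Farina--Montoro--Sciunzi machinery cited in the bibliography, and it would have the conceptual advantage of being the standard template; but the price is precisely the degenerate comparison principle on an unbounded slab and the absorption estimates that you flag as difficult.

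There is, however, a concrete error in how you rule out the degenerate scenario $U_\Lambda\equiv 0$. You assert that ``a single strict reflection about a plane $T_\Lambda$ with $\Lambda>0$ is incompatible with the integrability $u\in E$ and the nonlinear boundary condition restricted to $T_\Lambda\cap\mathbb{R}^{N-1}$.'' This is false: Problem~\eqref{P1} is invariant under translations of $x'$, so if $w$ is a decaying positive solution symmetric about $T_0$ then $w(\cdot-\Lambda e_1,\cdot)$ is a decaying positive solution symmetric about $T_\Lambda$, with the same integrability, decay rate, and boundary behavior. Symmetry about a shifted plane does not by itself contradict anything. What the paper actually derives in Case~3 is much stronger than symmetry: equality in Picone's identity yields $u=\theta\,u_\Lambda$ on $\Sigma_\Lambda$ and then $u_{x_1}\equiv 0$, i.e.\ $u$ is \emph{constant in $x_1$} on $\Sigma_\Lambda$, and that is what is incompatible with $u(x)\to 0$ as $|x|\to+\infty$. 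Your proposal would need to produce an analogous quantitative statement (constancy in $x_1$, or periodicity forcing a contradiction with decay), not merely symmetry across $T_\Lambda$; as written, this step is a genuine gap that the rest of the argument does not repair.
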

	\begin{proof}
		Assume by contradiction that $\Lambda>0$. By continuity, we have $U_\Lambda\leq0$ for all $x\in\mbox{\normalsize${\sum}$}_\Lambda$ and consequently
		\begin{equation}\label{Simetria1}
			\left\{
			\begin{alignedat}{2}
				-\Delta_p u+ \Delta_pu_\Lambda=0& &\quad&\text{ in }\quad \mbox{\normalsize${\sum}$}_\Lambda,\\
				u -u_\Lambda=0& &\quad&\text{ in }\quad \mbox{\Large $T$}_\Lambda,\\
				\vert \nabla u \vert^{p-2}\frac{\partial u}{\partial \nu}+u^{p-1}= u^{q-1}&\leq u_\Lambda^{q-1}= u_\Lambda^{p-1}+\vert \nabla u_\Lambda \vert^{p-2}\frac{\partial u_\Lambda}{\partial \nu}&\quad&
				\text{ on }\quad\mbox{\normalsize${\sum}$}_{\Lambda,0}.
			\end{alignedat}
			\right.
		\end{equation}
		We will begin an analysis of the possibilities.
		
		\medskip
		
		\noindent \textit{Case 1:} $u_{x_1}(x)<0$ uniformly for all $x\in\mbox{$T$}_\Lambda$. In this case exists a $\delta>0$ such that the function $g(x_1)=u(x_1,\overline{x},t)$ is decreasing  in  
		\[\mbox{$T$}^{\delta}_\Lambda=\left\{x\in\mbox{$T$}_\Lambda\text{ : } \Lambda-\delta<x_1<\Lambda+\delta\right\},\]
		which contradicts the minimality of $\Lambda$.
		
		\medskip 
		
		\noindent \textit{Case 2:} $u_{x_1}(x)<0$, but not uniformly for all $x\in\mbox{$T$}_\Lambda$.	This mean that exists a sequence $(x^k)\subset\mbox{\normalsize${\sum}$}_\Lambda$, with $x^k=(x_{1,k},\overline{x}_k,x_{N_k})$, such that $|\overline{x}_k|^2+x_{N_k}^2\rightarrow+\infty$, $u_{x_1}(x^k)=0$,
		and $|\Lambda-x_{1,k}|\rightarrow0$ as $k$ goes to infinity. Besides, we have for $\epsilon>0$ small enough that $U_{\Lambda-\epsilon}(x^k)=0$. Indeed, if $U_{\Lambda-\epsilon}(x^k)>0$, then
		exists a $\delta>0$ such that $U_{\Lambda-\epsilon}(x)>0$ for all $x\in B(x^k_{\Lambda-\epsilon},\delta)$. Thus, the function $g(x_1) =u(x_1,\overline{x},x_N)$ is increasing on the left hand side of 
		$x_{1,k}$, which is a contradiction with $u_{x_1}(x_1,\overline{x}_k,x_{N_k})<0$, for $x_1<x_{1,k}$. If we assume that $U_{\Lambda-\epsilon}(x^k)<0$, then $u_{x_1}(x^k)=0$ implies that $x_{1,k}$ is an inflection point, and again we can follow as in the previous case. From this assumptions we have that exists a neighbourhood $B_k$, such that $x^k\in\partial B_k$, and the unit outward normal in $x^k$ given by $\nu=(-1,\overline{0},0)$. Also we can ask that $U_{\Lambda-\epsilon}(x^k)>0$ in $B_k$, and $u(x)>u(x^k_{\Lambda-\epsilon})$ for $x\in B_k$. So, the function $v(x)=u(x)-u(x^k_{\Lambda-\epsilon})$ is positive in $B_k$ and $v(x^k)=0$. Besides, $\Delta_p v=0$ in $B_k$. By the strong maximum principle (see \cite{vazquez}), the normal derivative has a negative sign,
		\[0>\frac{\partial v}{\partial \nu}(x^k)=-2u_{x_1}(x^k)=0,\]
		which is a contradiction.
		
		\noindent \textit{Case 3:} $u_{x_1}(x)\geq 0$ for all $x\in\mbox{$T$}_\Lambda$. From Picone's identity \cite{alegretto}, we have
		\begin{align*}
			0\leq&\int_{{\sum}_\Lambda}\left(\vert\nabla u\vert^p+(p-1)\frac{u^p}{u_\Lambda^p}\vert \nabla u_\Lambda\vert^p-p\frac{u^{p-1}}{u_\Lambda^{p-1}}\nabla u\vert 
			\nabla u_\Lambda\vert^{p-2}\nabla u_\Lambda\right)\\
			=&\int_{{\sum}_\Lambda}\left(\vert\nabla u\vert^p-\nabla\left(\frac{u^p}{u_\Lambda^{p-1}}\right)\vert \nabla u_\Lambda\vert^{p-2}\nabla u_\Lambda\right)=A.
		\end{align*}
		From \eqref{Simetria1} and using integration by parts we have 
		\begin{align*}
			A=&\int_{{\sum}_{\Lambda,0}}\left(u^q-u^p-u^pu_\Lambda^{q-p}+u^p\right)+\int_{\mbox{$T$}_\Lambda}\left(u\vert\nabla u\vert^{p-2}\frac{\partial u}{\partial \nu}-u_\Lambda\vert \nabla u_\Lambda
			\vert^{p-2}\frac{\partial u_\Lambda}{\partial \nu}\right)\\
			=&\int_{{\sum}_{\Lambda,0}}u^p\left(u^{q-p}-u_\Lambda^{q-p}\right)+\int_{\mbox{$T$}_\Lambda}u\vert\nabla u\vert^{p-2}\left(-2u_{x_1}\right)\leq0.
		\end{align*}
		As a consequence, there exists a constant $\theta$ such that $u=\theta u_\Lambda$ in ${\sum}_\Lambda$. Now, since $U_\Lambda\leq0$ we
		conclude that $\theta\leq1$. On the other hand,
		\[u_{x_1}=-\theta u_{x_1},\ u_{x_2}=\theta u_{x_2},\ \ldots,\ u_{x_N}=\theta u_{x_N},\ \ \mbox{on}\ \ \mbox{${T}$}_\Lambda.\]
		Since the function $u$ is not a constant, we conclude that $\theta \neq0$. Thus, $u_{x_1}\equiv0$. Now, assume that $u_{x_j}=0$, for $j=2,\ldots,N$ on $\mbox{$T$}
		_\Lambda$. This implies that $u$ is constant on 
		$\mbox{$T$}_\Lambda$, which is not possible. Therefore, $u_{x_j}\neq0$, for some $j=2,\ldots,N$, on $\mbox{$T$}_\Lambda$, 
		and we get that $\theta=1$. This gives the conclusion that $U_\Lambda\equiv0$. The argument above shows that $\Lambda=0$. Indeed, since $u_{x_1}=0$ on ${\sum}_\Lambda$, we conclude that $u$ is constant in $x_1$. But this implies that 
		$U_{\Lambda-\epsilon}\equiv0$, for $\epsilon>0$.
		By Theorem 
		\ref{T-Regularity-Decay}, for $(x_1,\overline{x}_0,x_{N_0})\in\mathbb{R}\times\mathbb{R}^{N-2}\times\mathbb{R}_+\cap{\sum}_\Lambda$ we have
		\begin{multline*}
		0<c(x_0,x_{N_0})=u(x_1,\overline{x}_0,x_{N_0})=\\O\left((x^2_1+|\overline{x}_0|^2+x_{N_0}^2)^{(p-N)/(2(p-1))}\right)\longrightarrow 0 \ \mbox{as}\ \ x_1\rightarrow+\infty,
  \end{multline*}
		which is a contradiction. This completes the proof of Lemma~\ref{sym:5}.
	\end{proof}
	
	\begin{proof}[Finalizing the proof of Theorem \ref{simetria}]
		According to Lemma \ref{sym:5} we have $\Lambda\equiv0,$ which means by the continuity,
		\[
		u(x^\prime,x_N)\leq u_0(x^\prime,x_N)\ \  \mbox{for all}\ \ (x^\prime,x_N)\in \Sigma_0.
		\]
		Thus, applying the Moving plane method to the function $V_{\lambda}=u_\lambda-u$, 
		\[
		u_0(x^\prime,x_N)\leq u(x^\prime,x_N)\ \  \mbox{for all}\ \ (x^\prime,x_N)\in \Sigma_0.
		\]
		Therefore, $u$ is symmetric with respect to the $x_1$ direction. Applying the same procedure to the variables $x_2,\dots, x_{N-1}$, we get that $u$ is radially symmetric around the point $(0',0)\in\mathbb{R}^{N}_+$ and this finishes the proof.
	\end{proof}

 \begin{flushleft}
 {\bf Funding:}  J.M.~do~\'O acknowledges partial support  from 
	CNPq through grants 312340/2021-4  and 429285/2016-7 and Para\'iba State Research Foundation (FAPESQ), grant no 3034/2021.  E. Medeiros acknowledges partial support  from CNPq through grant 308900/2019-7. R. Clemente acknowledges partial support from CNPq through grant 304454/2022-2.\\ 
\end{flushleft}


\end{document}